\newtheorem{thm}{Theorem}[section]
\newtheorem{cor}[thm]{Corollary}
\newtheorem{defn}[thm]{Definition}
\newtheorem{lem}[thm]{Lemma}
\newtheorem{obs}[thm]{Observation}
\numberwithin{equation}{section}
\newcommand{\F}{\mathcal{F}}
\newcommand{\G}{\mathcal{G}}
\title{Hereditary unigraphs and Erd\H{o}s--Gallai equalities}
\author{Michael D. Barrus\\Department of Mathematics, Brigham Young University\\Provo, UT 84602, USA.\\Phone: (801) 422-2336. FAX: (801) 422-0504.\\\texttt{barrus@math.byu.edu}}
\begin{document}

\maketitle

\begin{abstract}
We give characterizations of the structure and degree sequences of hereditary unigraphs, those graphs for which every induced subgraph is the unique realization of its degree sequence. The class of hereditary unigraphs properly contains the threshold and matrogenic graphs, and the characterizations presented here naturally generalize those known for these other classes of graphs.

The degree sequence characterization of hereditary unigraphs makes use of the list of values $k$ for which the $k$th Erd\H{o}s--Gallai inequality holds with equality for a graphic sequence. Using the canonical decomposition of Tyshkevich, we show how this list describes structure common among all realizations of an arbitrary graphic sequence.\medskip

\noindent\emph{Keywords}: chair, Erd\H{o}s--Gallai inequalities, degree sequence, hereditary graph class, unigraph
\end{abstract}

\section{Introduction}\label{sec: one}

Unigraphs are those graphs such as $C_5$ or $3K_2$ with the remarkable property that they are the unique graphs (up to isomorphism) having their respective degree sequences. Relatively few graphs satisfy this requirement, but those that do comprise a number of interesting classes. For example, edgeless graphs and complete graphs are certainly unigraphs. These trivial examples are included in the class of threshold graphs, which were defined by Chv\'{a}tal and Hammer~\cite{ChvatalHammer73} in connection with set-packing problems and independently discovered by several other authors in varying contexts (see Chapter 1 in~\cite{MahadevPeled95} for a summary). Matroidal graphs and matrogenic graphs were introduced by Peled~\cite{Peled77} and F\"{o}ldes and Hammer~\cite{FoldesHammer78}, respectively, as graphs for which certain edge or vertex subsets form the circuits of a matroid. These classes satisfy the following inclusions: \begin{equation} \label{eq: tower}
\left.\begin{aligned}
  \text{complete}\\
  \text{edgeless}
\end{aligned}\right\rbrace \subset
\text{threshold} \subset \text{matroidal} \subset \text{matrogenic} \subset \text{unigraph}.\end{equation} Graphs in these classes have been well studied; for surveys of results, see~\cite{BrandstadtEtAl99} and~\cite{MahadevPeled95}.

One interesting thing to note is that the proper subfamilies of unigraphs in \eqref{eq: tower} are all hereditary (i.e., closed under taking induced subgraphs), while the class of unigraphs itself is not. For example, the unique graph with degree sequence $(4,2,2,2,2,2)$ contains two nonisomorphic induced subgraphs with degree sequence $(3,2,2,2,1)$. Motivated by this contrast, we might ask how far a hereditary class may enlarge the class of matrogenic graphs while staying within the family of unigraphs. The class of \emph{hereditary unigraphs}, defined in~\cite{Barrus12}, is the maximal hereditary class containing only unigraphs; it may be thought of as the union of all hereditary families of unigraphs.

In this paper we show that the hereditary unigraphs generalize important properties possessed by graphs in the hereditary classes in~\eqref{eq: tower}. In particular, these latter classes all have three specific types of characterizations, in terms of forbidden induced subgraphs, strict requirements on vertex adjacencies, and degree sequences. We will see that these same types of characterizations exist for hereditary unigraphs.

Every hereditary family is characterized by a list of forbidden induced subgraphs, and this list is known for each of the hereditary families in~\eqref{eq: tower}. Complete graphs and edgeless graphs forbid $2K_1$ and $K_2$, respectively. Threshold graphs~\cite{ChvatalHammer73} are precisely the $\{2K_2,C_4,P_4\}$-free graphs. Matrogenic graphs~\cite{FoldesHammer78} and matroidal graphs~\cite{Peled77} have characterizations in terms of ten and eleven forbidden subgraphs, respectively, each having five vertices. In~\cite{Barrus12} the author provided a list of forbidden subgraphs for the hereditary unigraphs. (Notation and definitions will be given later in the paper.)

\begin{thm}[\cite{Barrus12}] \label{thm: Barrus uni}
A graph is a hereditary unigraph if and only if it contains no element of \begin{multline}\nonumber \{P_5, \overline{P_5}, K_2+K_3, K_{2,3}, \text{4-pan}, \text{co-4-pan}, 2P_3, \overline{2P_3},\\ K_2+P_4, \overline{K_2+P_4}, K_2+C_4, \overline{K_2+C_4}, R, \overline{R}, S, \overline{S}\}\end{multline} as an induced subgraph.
\end{thm}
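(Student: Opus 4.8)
The plan is to recast the theorem as a classification of the \emph{minimal non-unigraphs}: those graphs that fail to be unigraphs but all of whose proper induced subgraphs are unigraphs. Because being a hereditary unigraph is by definition closed under taking induced subgraphs, the class has a unique list of minimal forbidden induced subgraphs, and a graph $H$ belongs to that list precisely when $H$ is not a unigraph while every proper induced subgraph of $H$ is (each such proper subgraph being then automatically a hereditary unigraph). So it suffices to show that the sixteen named graphs are exactly the minimal non-unigraphs. Two preliminary observations streamline everything: first, $G$ is a unigraph if and only if $\overline{G}$ is, so the list is closed under complementation and it is enough to treat one graph from each complementary pair; second, a direct check shows that every graph on at most four vertices is a unigraph, so any minimal non-unigraph has at least five vertices and, when it has exactly five, its minimality is automatic.

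For the necessity (``only if'') direction I would verify that each of the sixteen graphs is a minimal non-unigraph. To see that such a graph $H$ is not a unigraph I exhibit a second realization of its degree sequence obtained by a single edge swap (a $2$-switch): for instance $P_5$ and $K_2+K_3$ are the two realizations of $(2,2,2,1,1)$, the 4-pan and its complement realize the self-complementary sequence $(3,2,2,2,1)$, and $2P_3$ and $K_2+P_4$ both realize $(2,2,1,1,1,1)$. Complementation then disposes of $\overline{P_5}$, $K_{2,3}=\overline{K_2+K_3}$, and the remaining partners at no extra cost. Minimality for the five-vertex graphs is free by the second preliminary observation; for the larger graphs $2P_3$, $K_2+P_4$, $K_2+C_4$, $R$, $S$ and their complements one checks directly that deleting any single vertex leaves a unigraph. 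Both verifications are finite.

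The substance of the proof is the sufficiency (``if'') direction: every minimal non-unigraph is one of the sixteen. Let $G$ be a minimal non-unigraph. Since $G$ fails to be a unigraph, some $2$-switch on $G$ produces a non-isomorphic graph, so $G$ contains an alternating four-cycle on vertices $a,b,c,d$ with $ab,cd\in E(G)$ and $ad,bc\notin E(G)$ whose switch changes the isomorphism type; I would fix such a quadruple and study how each remaining vertex $v$ attaches to $\{a,b,c,d\}$. The minimality of $G$ is the engine here: every proper induced subgraph is a unigraph, hence rigid, so for each outside vertex $v$ the five vertices $\{a,b,c,d,v\}$ must induce a unigraph, which already excludes most of the sixteen possible adjacency patterns for $v$. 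A careful accounting of the permitted attachment types, together with the interactions between pairs of outside vertices, bounds the order of $G$ and pins down its edges; comparing the finitely many survivors with the list finishes the argument. I expect this attachment analysis to be the main obstacle --- in particular, bounding the number of vertices and ruling out patterns that look locally admissible yet globally reassemble a smaller obstruction --- and I anticipate that Tyshkevich's canonical decomposition, invoked later in the paper for the degree-sequence characterization, is the natural device for organizing these cases.
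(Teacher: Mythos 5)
A preliminary point: this paper does not prove Theorem~\ref{thm: Barrus uni} at all --- it imports the statement from~\cite{Barrus12} --- so there is no in-paper proof to compare you against, and your proposal must be judged on its own completeness. The framing you choose is valid: since the hereditary unigraphs are by definition the maximal hereditary class of unigraphs, the minimal forbidden induced subgraphs are exactly the \emph{minimal non-unigraphs} (graphs that are not unigraphs but all of whose proper induced subgraphs are), and your ``only if'' half is sound and finite: the degree-sequence partners you name are correct ($(2,2,2,1,1)$ for $P_5$ and $K_2+K_3$, the self-complementary $(3,2,2,2,1)$ for the 4-pan and co-4-pan, $(2,2,1,1,1,1)$ for $2P_3$ and $K_2+P_4$), the list is indeed closed under complementation, all graphs on at most four vertices are unigraphs, and minimality of the six-vertex members reduces to finitely many vertex-deletion checks.

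The genuine gap is the ``if'' direction, which is the entire content of the theorem and which your proposal asserts rather than proves. The sentence ``a careful accounting of the permitted attachment types, together with the interactions between pairs of outside vertices, bounds the order of $G$ and pins down its edges'' is precisely the claim that needs a proof: nothing in the proposal shows that a minimal non-unigraph has at most six vertices, and the case $n \geq 7$ (where pairs and triples of outside vertices interact) is not even begun. There is also a structural obstacle your plan glosses over: minimality gives you only the \emph{local} constraint that each $G[\{a,b,c,d,v\}]$ is a unigraph, while your other hypothesis --- that the switch on $\{a,b,c,d\}$ changes the isomorphism class of $G$ --- is \emph{global} and cannot be checked inside five-vertex windows; to derive a contradiction for large $n$ you must at some point prove that the surviving configurations are themselves unigraphs (or contain a smaller non-unigraph), and establishing that a graph is a unigraph is exactly the kind of global statement the local attachment analysis does not deliver. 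Finally, invoking Tyshkevich's canonical decomposition as the organizing device is reasonable as a tool, but note that in this paper the logical flow runs the other way: the canonical-component structure of hereditary unigraphs (Theorem~\ref{thm: main structural}) is \emph{deduced from} Theorem~\ref{thm: Barrus uni} via Lemma~\ref{lem: her uni iff components are}; to use the decomposition in a proof of Theorem~\ref{thm: Barrus uni} you would first have to characterize the indecomposable components that are hereditary unigraphs (the spider-expansion analysis of Sections~\ref{sec: two}--\ref{sec: four}) without assuming the forbidden list, which is substantial work your proposal does not contain.
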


Hereditary unigraphs also have characterizations in terms of their structure and degree sequences. We present these in Theorems~\ref{thm: main structural}, \ref{thm: deg seq for her unis, part I}, and~\ref{thm: deg seq for her unis, part II} and show how they are natural generalizations of the characterizations for threshold and matrogenic graphs, which we recall later.

Our structural results will begin with the class of graphs containing no induced subgraph in $\{2K_2,C_4,\text{chair}\}$, where the chair (also known as the fork) is the tree with degree sequence $(3,2,1,1,1)$. This class properly contains the threshold graphs and was shown in~\cite{NonMinimalTriples} to contain only unigraphs. We show that these graphs and their complements may be assembled from building blocks known as spiders by ``expanding'' vertices according to certain rules. In relaxing these rules to allow more varied expansions while maintaining a class with a degree sequence characterization, we arrive at the family of hereditary unigraphs.

Graph families containing only unigraphs necessarily have degree sequence characterizations; to recognize membership of a graph $G$ in such a family $\mathcal{F}$, one could simply check for the degree sequence of $G$ in a (potentially infinitely long) list of the degree sequences of graphs in $\mathcal{F}$. However, the structural conditions satisfied by graphs in the families in~\eqref{eq: tower} lead to much more satisfying characterizations of their degree sequences. The same will be true in our description of the degree sequences of hereditary unigraphs.

Interestingly, our characterization is closely related to the well known Erd\H{o}s--Gallai inequalities~\cite{ErdosGallai60} for determining when a list of nonnegative integers is the degree sequence of a graph. Given a degree sequence $(d_1,\dots,d_n)$ with its terms in descending order and a nonnegative integer $k \leq n$, the \emph{$k$th Erd\H{o}s--Gallai inequality} is \[\sum_{i=1}^k d_i \leq k(k-1) + \sum_{i=k+1}^n \min\{k,d_i\}.\] In order for a list of nonnegative integers with even sum to be the degree sequence of a graph, it must satisfy each of the Erd\H{o}s--Gallai inequalities. Threshold graphs and split graphs have degree sequence characterizations requiring one or more of the inequalities to further hold with equality (\cite{HammerEtAl78} and~\cite{HammerSimeone81}; see Section~\ref{sec: five}). In characterizing the degree sequences of hereditary unigraphs we study the list of integers $k$ for which the $k$th inequality holds with equality. Using the canonical decomposition of Tyshkevich~\cite{Tyshkevich00}, we describe what this list reveals about the structure of an arbitrary graph having a specified degree sequence.

We proceed as follows: In Sections 2 through 4 we describe the structure of hereditary unigraphs. In Section 2 we recall the canonical decomposition of a graph as described by Tyshevich~\cite{Tyshkevich00} and then characterize the structure of $\{2K_2,C_4,\text{chair}\}$-free graphs and their complements, showing how they may be obtained via vertex expansions in spiders. In Section 3 we explore these expansions further, arriving at a larger hereditary class with a degree sequence characterization. In Section 4 we relate this class to the hereditary unigraphs to obtain a structural characterization of the latter. In Section 5 we use the canonical decomposition to study the equalities among the Erd\H{o}s--Gallai inequalities of a degree sequence. We conclude in Section 6 by characterizing the degree sequences of hereditary unigraphs.

All graphs considered in this paper will have no loops or multiple edges. We use $V(G)$ and $E(G)$ to denote the vertex set and edge set of a graph $G$. Given a vertex $u$ in $V(G)$, we denote the neighborhood of $u$ (that is, the set of vertices adjacent to $u$ in $G$) by $N_G(u)$. Given a subset $W$ of $V(G)$, we use $G[W]$ to denote the induced subgraph of $G$ with vertex set $W$. We adopt the convention that when listing the degree sequence $(d_1,\dots,d_n)$ of a graph, the terms will always appear in descending order, and in place of $\alpha$ copies of a sequence term $k$, we may write $k^{\alpha}$. We denote the complement of $G$ by $\overline{G}$. The disjoint union of graphs $G$ and $H$ will be denoted by $G+H$, and the disjoint union of $m$ copies of a graph $G$ will be indicated by $mG$. The join of $G$ and $H$ will be denoted by $G \vee H$. Complete graphs, cycles, and paths with $n$ vertices will be denoted respectively by $K_n$, $C_n$, and $P_n$. The complete bigraph with partite sets of sizes $m$ and $n$ will be denoted $K_{m,n}$. The 4-pan is defined as the graph obtained by attaching a vertex of degree 1 to a 4-cycle, and the co-4-pan is the complement of the 4-pan. The graphs $R$, $\overline{R}$, $S$, $\overline{S}$ mentioned in Theorem~\ref{thm: Barrus uni} will be defined in Section~\ref{sec: three} and illustrated in Figures~\ref{fig: R,R-bar} and~\ref{fig: other forb subgr}.

\section{$\{2K_2,C_4,\text{chair}\}$-free graphs and their complements} \label{sec: two}
Given a set $\mathcal{F}$ of graphs, we say a graph $G$ is $\mathcal{F}$-free if $G$ contains no induced subgraph isomorphic to an element of $\mathcal{F}$. In this section we describe the structure of $\{2K_2,C_4,\text{chair}\}$-free graphs and $\{2K_2,C_4,\text{kite}\}$-free graphs, where the chair is the tree shown in Figure~\ref{fig: chairkite intro} together with its complement, the kite. In a sense to be made more precise in later sections, graphs in these classes are prototypical hereditary unigraphs.

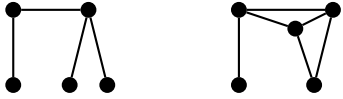
\begin{figure}
\centering
\begin{pspicture}(10,0)(14.25,1)
% chair
\cnode*(10,0){3pt}{a} \cnode*(10,1){3pt}{e} \cnode*(11,1){3pt}{d} \cnode*(10.75,0){3pt}{b} \cnode*(11.25,0){3pt}{c}
\ncline{-}{a}{e} \ncline{-}{e}{d} \ncline{-}{d}{b} \ncline{-}{d}{c}
% kite
\cnode*(13,0){3pt}{v} \cnode*(13,1){3pt}{z} \cnode*(14,0){3pt}{w} \cnode*(13.75,0.75){3pt}{y} \cnode*(14.25,1){3pt}{x}
\ncline{-}{v}{z} \ncline{-}{z}{y} \ncline{-}{z}{x} \ncline{-}{y}{x} \ncline{-}{y}{w} \ncline{-}{x}{w}
\end{pspicture}
\caption{The chair and kite.}
\label{fig: chairkite intro}
\end{figure}

We begin by recalling some relevant results.

\begin{thm}[Bl\'{a}zsik et al.~\cite{BlazsikEtAl93}]\label{thm: pseudo-split char}
A graph $G$ is $\{2K_2,C_4\}$-free if and only if its vertex set may be partitioned into sets $A$, $B$, and $C$ such that
\begin{enumerate}
\item[\textup{(i)}] $A$ is an independent set and $B$ is a clique;
\item[\textup{(ii)}] $C$ is empty, or $G[C] \cong C_5$;
\item[\textup{(iii)}] each vertex in $C$ is adjacent to every vertex in $B$ and to no vertex in $A$.
\end{enumerate}
\end{thm}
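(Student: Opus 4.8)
The plan is to prove both implications directly, exploiting two symmetries. First, the class of $\{2K_2,C_4\}$-free graphs is self-complementary, since $\overline{2K_2}\cong C_4$; and the asserted partition is self-complementary as well, with the roles of $A$ and $B$ interchanged and $\overline{C_5}\cong C_5$. Second, $C_5$ itself contains no induced $2K_2$ or $C_4$ (deleting one vertex leaves $P_4$, and its two disjoint edges are always joined by a third), and both $2K_2$ and $C_4$ have independence number and clique number equal to $2$. These facts drive the whole argument.

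For the easier implication, suppose $V(G)=A\cup B\cup C$ satisfies (i)--(iii); I would show no four vertices induce a $2K_2$ or a $C_4$. Fix a putative forbidden subgraph $H$ and record how its vertices distribute among $A$, $B$, and $C$. Since $A$ is independent and $B$ is a clique, at most two vertices of $H$ lie in $A$ and at most two in $B$; since $G[C]$ is a $C_5$, not all four lie in $C$. The remaining work is a short degree count using (iii): every vertex of $C$ is adjacent to all of $B$ and to none of $A$, while each vertex of $2K_2$ or $C_4$ has prescribed degree ($1$ or $2$). Placing even one vertex of $H$ in $C$ then forces a vertex of too-high or too-low degree, so in fact $H\subseteq A\cup B$. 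But $A$ independent and $B$ a clique is precisely a split partition of $G[A\cup B]$, and an elementary check shows split graphs contain no induced $2K_2$ or $C_4$, a contradiction.

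For the forward implication I would split on whether $G$ contains an induced $C_5$. If it does not, then $G$ is $\{2K_2,C_4,C_5\}$-free, hence split by the classical characterization of Földes and Hammer (or by a short extremal argument taking a maximum clique); taking $C=\emptyset$ with a split partition $A\cup B$ finishes this case. Otherwise fix an induced $C_5$ on $C=\{c_1,\dots,c_5\}$ in cyclic order. The heart of the proof is the claim that every vertex $v\notin C$ is adjacent either to all of $C$ or to none of it. To prove it I examine $S=N_G(v)\cap C$: by the rotational and reflective symmetry of $C_5$ there are essentially the four cases $|S|\in\{1,2,3,4\}$ to rule out, and complementation pairs $|S|=k$ with $|S|=5-k$, halving the work. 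In each case one exhibits an explicit forbidden subgraph among $v$ and suitable $c_i$; for example, if $S=\{c_1\}$ then $\{v,c_1,c_3,c_4\}$ induces a $2K_2$, and if $S=\{c_1,c_3\}$ then $\{v,c_1,c_2,c_3\}$ induces a $C_4$. Granting the claim, set $B=\{v\notin C:\ v\text{ is adjacent to all of }C\}$ and $A=\{v\notin C:\ v\text{ is adjacent to none of }C\}$, so that $V(G)=A\cup B\cup C$ and (iii) holds by construction. Condition (i) then follows from two more two-line checks: if two vertices of $B$ were nonadjacent they would, with a nonadjacent pair of $C$, induce a $C_4$; and if two vertices of $A$ were adjacent they would, with an adjacent pair of $C$, induce a $2K_2$. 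Hence $B$ is a clique and $A$ is independent.

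The main obstacle is the neighborhood-pattern claim in the forward direction: verifying completely that every partial adjacency of an outside vertex to the $C_5$ produces a forbidden induced subgraph, so that only $|S|=0$ and $|S|=5$ survive. The symmetry reductions keep the case analysis short, but one must check \emph{every} four-element subset, since a configuration is easy to overlook (for instance, when $v$ is adjacent to three consecutive $c_i$, the witnessing $2K_2$ uses the two non-neighbors of $v$ together with a non-neighbor of that pair). The only genuinely external input is the split-graph characterization invoked in the $C_5$-free case; everything else reduces to the two small self-complementary facts recorded at the outset.
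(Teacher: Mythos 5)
Your proposal is correct, but there is nothing in the paper to compare it against: the paper states this result as Theorem~2.1 with an attribution to Bl\'{a}zsik, Hujter, Pluh\'{a}r, and Tuza (reference \cite{BlazsikEtAl93}) and gives no proof, using it only as imported input for Corollary~\ref{cor: split,pseudo-split chars} and the later structure theorems. Judged on its own merits, your argument is the standard one for this characterization and it goes through: the backward direction reduces, via the clique/independence numbers of $2K_2$ and $C_4$ and the fact that $C_5$ contains neither, to the split case (though your phrase ``short degree count'' undersells what is really an all-or-nothing adjacency analysis --- e.g.\ a putative $C_4$ with its diagonal in $C$ and the other two vertices in $B$ dies because $B$ is a clique, which one can phrase as a degree-3 vertex in the induced subgraph); the forward direction correctly splits on the existence of an induced $C_5$, invokes F\"{o}ldes--Hammer (the paper's Theorem~\ref{thm: split forb sub}, likewise cited without proof) in the $C_5$-free case, and otherwise establishes the key claim that every outside vertex sees all or none of the $C_5$. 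I checked the cases $|S|\in\{1,2,3,4\}$, including the ones you left implicit (adjacent pair, two-plus-one), and each does yield an induced $2K_2$ or $C_4$ as you indicate; the complementation trick pairing $|S|=k$ with $|S|=5-k$ is legitimate since $\overline{2K_2}\cong C_4$ and $\overline{C_5}\cong C_5$. So the only external input is the split-graph characterization, exactly as in the original paper of Bl\'{a}zsik et al., and your write-up would serve as a self-contained proof of the cited theorem.
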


A graph is \emph{split} if its vertex set may be partitioned into a clique and an independent set; in the $\{2K_2,C_4\}$-free graph in the last result, $G[A \cup B]$ is a split graph. Indeed, we have the following.

\begin{thm}[F\"{o}ldes and Hammer~\cite{FoldesHammer76}] \label{thm: split forb sub}
A graph is split if and only if it is $\{2K_2,C_4,C_5\}$-free.
\end{thm}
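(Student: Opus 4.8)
The plan is to treat the two directions separately, relying throughout on the fact that the class of split graphs is hereditary: if $(K,I)$ partitions $V(G)$ into a clique $K$ and an independent set $I$ and $W\subseteq V(G)$, then $(K\cap W,\,I\cap W)$ exhibits $G[W]$ as split. For the forward direction it then suffices to check that none of $2K_2$, $C_4$, $C_5$ is itself split, since an induced copy of a non-split graph inside a split graph would contradict heredity. Writing $\omega$ and $\alpha$ for the clique and independence numbers, the graph $C_5$ has $\omega=\alpha=2$, so any clique together with any independent set covers at most $2+2=4<5$ vertices and $C_5$ cannot be split. For $2K_2$ a clique has at most two vertices and must lie within one of the two edges, whereupon the remaining two vertices form an edge and so are not independent; for $C_4$ a clique is again at most an edge, and the two remaining vertices of the cycle are adjacent. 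Thus each of the three graphs is non-split, and every split graph is $\{2K_2,C_4,C_5\}$-free.

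For the reverse direction I would invoke the pseudo-split characterization, Theorem~\ref{thm: pseudo-split char}. A $\{2K_2,C_4,C_5\}$-free graph $G$ is in particular $\{2K_2,C_4\}$-free, so its vertex set partitions into sets $A$, $B$, $C$ with $A$ independent, $B$ a clique, and $C$ either empty or inducing $C_5$. Since $G$ is $C_5$-free, $G[C]$ cannot be a $5$-cycle, so $C=\emptyset$. Hence $(B,A)$ partitions $V(G)$ into a clique and an independent set, and $G$ is split, completing the proof.

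All of the content resides in the reverse direction, and in this presentation Theorem~\ref{thm: pseudo-split char} discharges it immediately; the only real obstacle is recognizing that $C_5$-freeness collapses the set $C$. Were Theorem~\ref{thm: pseudo-split char} unavailable, the natural self-contained route would be to fix a maximum clique $Q$ and try to show that $V(G)\setminus Q$ is independent: an edge $xy$ outside $Q$ forces, by maximality of $Q$, a non-neighbor of $x$ and a non-neighbor of $y$ inside $Q$, and analyzing how $x$ and $y$ attach to these vertices yields an induced $2K_2$ or $C_4$ in the principal cases, with the $C_5$ exclusion among the tools needed to eliminate the remaining configuration. That argument requires a genuine case analysis of the attachment of $x$ and $y$ to $Q$---precisely the bookkeeping already packaged by the pseudo-split partition, whose $C_5$ component is exactly what survives the $2K_2$/$C_4$ analysis---so I would favor the short route through Theorem~\ref{thm: pseudo-split char}.
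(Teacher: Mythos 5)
The paper offers no proof of this statement: it is imported, with attribution, as the known theorem of F\"{o}ldes and Hammer, so there is no internal argument to compare yours against. Your proof itself is correct. The forward direction (splitness is hereditary, and none of $2K_2$, $C_4$, $C_5$ is split) is routine and right, and the reverse direction is a valid one-line deduction from Theorem~\ref{thm: pseudo-split char}, which the paper likewise cites from the literature; indeed the paper's own remark just after Theorem~\ref{thm: pseudo-split char}, that $G[A\cup B]$ is split, is exactly the observation you exploit. The one caution about this route is order of dependence: Theorem~\ref{thm: pseudo-split char} (Bl\'{a}zsik et al., 1993) is a strict generalization of Theorem~\ref{thm: split forb sub} (F\"{o}ldes--Hammer, 1976), so you are deriving the older, weaker result from the newer, stronger one. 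Inside this paper, where both are independent citations, that is perfectly legitimate; as a freestanding development you would owe a sentence confirming that the proof of the pseudo-split theorem does not itself invoke the split-graph theorem (it does not---Bl\'{a}zsik et al.'s argument is self-contained---so there is no genuine circularity).

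One concrete correction to your closing sketch, which you wisely set aside: fixing a \emph{maximum} clique $Q$ and trying to show that $V(G)\setminus Q$ is independent cannot work, because that statement is already false for split graphs. Take the split graph with clique $\{a,b,c\}$ and independent vertices $x,y$, where $x$ is adjacent to $a,b$ and $y$ is adjacent to $b,c$. Then $Q=\{a,b,x\}$ is a maximum clique, yet the edge $cy$ lies outside $Q$, and there is no induced $2K_2$, $C_4$, or $C_5$ available to produce a contradiction. The standard F\"{o}ldes--Hammer argument repairs this by choosing, among all maximum cliques, one minimizing the number of edges of $G-Q$ (or by an exchange argument); with that secondary optimization the case analysis you describe does go through. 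Since your submitted proof rests on Theorem~\ref{thm: pseudo-split char} rather than on this sketch, its correctness is unaffected.
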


Because of the similarity of their vertex set partitions and forbidden subgraph characterization to those possessed by the split graphs, $\{2K_2,C_4\}$-free graphs are known as \emph{pseudo-split graphs}.

The decomposition presented in Theorem~\ref{thm: pseudo-split char} is generalized and refined considerably by the canonical decomposition of R. I. Tyshkevich. This decomposition provides a unifying framework for the characterizations of $\{2K_2,C_4,\text{chair}\}$-free graphs, the hereditary families in~\eqref{eq: tower}, and the full family of hereditary unigraphs. Our brief presentation here follows~\cite{Tyshkevich00}.

Given a split graph with a partition of $V(G)$ into an independent set $A$ and a clique $B$, we call the triple $(G,A,B)$ a \emph{splitted graph}. Two splitted graphs $(G,A,B)$ and $(G',A',B')$ are \emph{isomorphic} if there exists a graph isomorphism $\varphi:V(G)\to V(G')$ such that $\varphi(A) = A'$. Given a splitted graph $(G,A,B)$ and a graph $H$ on disjoint vertex sets, we define the \emph{composition} of $(G,A,B)$ and $H$ to be the graph $(G,A,B) \circ H$ formed by adding to $G+H$ all edges $uv$ such that $u \in B$ and $v \in V(H)$. For example, when $H = K_3$ and $G=P_4$, with $A$ the set of endpoints and $B$ the set of midpoints of $G$, the composition $(G,A,B) \circ H$ is the graph on the left in Figure~\ref{fig: composition} (here heavy lines denote all possible edges between vertex sets). On the right we show $(G, A, B) \circ ((G,A,B) \circ H)$. The composition of two splitted graphs may also be thought of as a splitted graph, where the independent set and clique are the unions of the independent sets and of the cliques, respectively. The operation $\circ$ is associative, so in the future we will omit grouping parentheses when performing multiple compositions. Observe that in a composition $(G_k,A_k,B_k) \circ \dots \circ (G_1,A_1,B_1)\circ G_0$, each vertex in $B_i$ is adjacent to every vertex in $\bigcup_{j<i} V(G_j)$, each vertex in $A_i$ is adjacent to none of the vertices in $\bigcup_{j<i} V(G_j)$, and only the rightmost graph in the composition can fail to be a split graph.

\begin{figure}
\centering
\begin{pspicture}(-0.25,-0.25)(12.07,1.25)
\cnode*(0,0){3pt}{A} \cnode*(0,1){3pt}{B} \cnode*(1,0){3pt}{C} \cnode*(1,1){3pt}{D}
\cnode*(2.5,0.17){3pt}{E} \cnode*(2.88,0.83){3pt}{G} \cnode*(3.26,0.17){3pt}{F}
\ncline{-}{A}{B} \ncline{-}{B}{D} \ncline{-}{C}{D} \ncline{-}{E}{F} \ncline{-}{E}{G} \ncline{-}{F}{G}
\psellipse[linestyle=dashed](0.5,0)(0.75,0.25) \psellipse[linestyle=dashed](0.5,1)(0.75,0.25)
\pscircle[linestyle=dashed](2.88,0.39){0.64}
\uput[l](-0.25,0){\fontsize{7pt}{7pt}$A$} \uput[l](-0.25,1){\fontsize{7pt}{7pt}$B$} \uput[r](3.52,0.39){\fontsize{7pt}{7pt}$V(H)$}
\pnode(1.09,0.85){p1} \pnode(2.26,0.55){p2}
\ncline[linewidth=3pt]{p1}{p2}
\cnode*(6,0){3pt}{AA} \cnode*(6,1){3pt}{BB} \cnode*(7,0){3pt}{CC} \cnode*(7,1){3pt}{DD}
\cnode*(8.5,0){3pt}{AAA} \cnode*(8.5,1){3pt}{BBB} \cnode*(9.5,0){3pt}{CCC} \cnode*(9.5,1){3pt}{DDD}
\cnode*(11,0.17){3pt}{EE} \cnode*(11.38,0.83){3pt}{GG} \cnode*(11.76,0.17){3pt}{FF}
\ncline{-}{AA}{BB} \ncline{-}{BB}{DD} \ncline{-}{CC}{DD} \ncline{-}{AAA}{BBB} \ncline{-}{BBB}{DDD} \ncline{-}{CCC}{DDD} \ncline{-}{EE}{FF} \ncline{-}{EE}{GG} \ncline{-}{FF}{GG}
\psellipse[linestyle=dashed](6.5,0)(0.75,0.25) \psellipse[linestyle=dashed](6.5,1)(0.75,0.25)
\psellipse[linestyle=dashed](9,0)(0.75,0.25) \psellipse[linestyle=dashed](9,1)(0.75,0.25)
\pscircle[linestyle=dashed](11.38,0.39){0.64}
\pnode(7.2,0.91){p3} \pnode(10.74,0.47){p4}
\pnode(9.59,0.85){p5} \pnode(10.76,0.55){p6}
\pnode(6.98,0.81){p7} \pnode(8.52,0.19){p8}
\pnode(7.25,1){p9} \pnode(8.25,1){p10}
\ncline[linewidth=3pt]{p3}{p4}
\ncline[linewidth=3pt]{p5}{p6}
\ncline[linewidth=3pt]{p7}{p8}
\ncline[linewidth=3pt]{p9}{p10}
\end{pspicture}
\caption{The compositions $(G, A, B) \circ H$ and $(G,A,B)\circ(G,A,B)\circ H$.}
\label{fig: composition}
\end{figure}
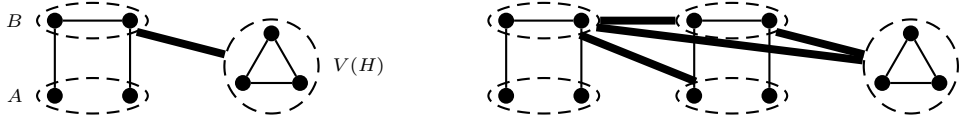

A graph is \emph{decomposable} if it can be written as a composition $(G,A,B) \circ H$ where $G$ and $H$ both have at least one vertex. Otherwise, it is \emph{indecomposable}. Maximal indecomposable induced subgraphs of a graph are called its \emph{indecomposable} (or \emph{canonical}) \emph{components}. Tyshkevich showed the following:

\begin{thm}[Tyshkevich~\cite{Tyshkevich80, Tyshkevich00}] \label{thm: canonical decomp is unique}
Every graph $G$ can be expressed as a composition \[G = (G_k, A_k, B_k) \circ \dots \circ (G_1, A_1, B_1) \circ G_0 \tag{$*$}\] of indecomposable components. Here the $(G_i, A_i, B_i)$ are indecomposable splitted graphs and $G_0$ is an indecomposable graph. (If $G$ is indecomposable, then $k=0$.) Graphs $G$ and $G'$ expressed as \textup{($*$)} and \[G' = (G'_\ell, A'_\ell, B'_\ell) \circ \dots \circ (G'_1, A'_1, B'_1) \circ G'_0\] are isomorphic if and only if $G_0 \cong G'_0$, $k = \ell$, and $(G_i, A_i, B_i) \cong (G'_i, A'_i, B'_i)$ for $1 \leq i \leq k$.
\end{thm}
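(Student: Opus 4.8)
The plan is to prove the two assertions of the theorem—the existence of the expression ($*$) and its uniqueness—separately, in each case inducting on $|V(G)|$ and peeling off the \emph{leftmost} factor of the composition. For existence I would argue as follows. If $G$ is indecomposable, then ($*$) holds with $k=0$ and $G_0=G$. Otherwise $G$ admits at least one expression $(P,A,B)\circ H$ with $P$ and $H$ both nonempty; among all such expressions I would fix one in which $|V(P)|$ is as small as possible. Minimality forces $(P,A,B)$ to be indecomposable: were $(P,A,B)=(P',A',B')\circ(P'',A'',B'')$ with both factors nonempty, associativity of $\circ$ would give $G=(P',A',B')\circ\bigl((P'',A'',B'')\circ H\bigr)$, an expression with a strictly smaller nonempty leftmost factor, a contradiction. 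Setting $(G_k,A_k,B_k):=(P,A,B)$, the graph $H$ has fewer vertices than $G$, so the inductive hypothesis supplies a decomposition of $H$; composing on the left yields ($*$).

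For uniqueness the key is to recognize each decomposition $G=(P,A,B)\circ H$ as what I will call a \emph{dominating--isolating split}: an ordered partition $V(G)=(A\cup B)\sqcup W$ with $A\cup B$ and $W$ both nonempty, $A$ independent, $B$ a clique, every vertex of $B$ adjacent to all of $W$, and every vertex of $A$ adjacent to none of $W$; here $H=G[W]$ and $P=G[A\cup B]$. Let $\mathcal{W}$ be the family of all sets $W$ arising this way. A member $W$ is maximal in $\mathcal{W}$ exactly when its leftmost factor $G[A\cup B]$ is indecomposable, since if that factor splits then moving part of it into the remainder enlarges $W$. So the plan is to show that all maximal members of $\mathcal{W}$ give isomorphic leftmost factors and isomorphic remainders, and then to peel off this common leftmost component and induct on the remainder, the base case $k=\ell=0$ recording that an indecomposable $G$ forces $G_0=G$. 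A convenient tool here is that $\mathcal{W}$ is closed under unions of intersecting members whose union is not all of $V(G)$: if $W_1\cap W_2\neq\emptyset$, then no vertex outside $W_1\cup W_2$ can be dominating for one split while isolating for the other, as it would be both adjacent and nonadjacent to a vertex of $W_1\cap W_2$; taking $A=A_1\cap A_2$ and $B=B_1\cap B_2$ then realizes $W_1\cup W_2$ as a split.

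The main obstacle is to prove that distinct maximal splits nevertheless yield isomorphic leftmost factors and isomorphic remainders, since the vertex set of a leftmost component is typically not canonical, as $2K_1=(K_1,\{u\},\emptyset)\circ K_1$ (two disjoint maximal splits) and $K_3=K_1\circ K_1\circ K_1$ (three pairwise overlapping maximal splits) already illustrate. The closure property shows that two distinct maximal members of $\mathcal{W}$ are either disjoint or intersect with union equal to $V(G)$, and in each of these configurations I would construct an explicit isomorphism between the two leftmost factors, using the indecomposability of each factor to pin down its internal structure and transferring it to the other; this is the genuinely delicate step, where the ``commuting'' maximal splits must be matched one vertex-class at a time.

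One further point must be settled before the induction closes. Since an indecomposable split graph has neither a universal nor an isolated vertex, its partition into a clique and an independent set is unique; hence an isomorphism of leftmost components \emph{as graphs} automatically respects the splitted-graph structure and carries independent set to independent set, which is exactly the notion of isomorphism of splitted graphs required in the statement. Peeling off this isomorphism-invariant leftmost component and inducting on the smaller remainder then yields $k=\ell$, the isomorphisms $(G_i,A_i,B_i)\cong(G'_i,A'_i,B'_i)$ for $1\le i\le k$, and $G_0\cong G'_0$, completing the proof.
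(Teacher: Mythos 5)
A preliminary remark: the paper does not prove this theorem at all --- it is imported from Tyshkevich~\cite{Tyshkevich80,Tyshkevich00} as a cited result --- so there is no in-paper argument to compare yours against; your proposal has to stand on its own.

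It does not yet stand. Your existence argument and your uniqueness framework (the family $\mathcal{W}$ of dominating--isolating splits, the link between maximality of $W$ and indecomposability of the leftmost factor, closure of $\mathcal{W}$ under unions of intersecting members, and the consequence that distinct maximal members are disjoint or cover $V(G)$) are all correct as far as they go. The genuine gap is that the proof stops exactly where the theorem's content lies: you write ``I would construct an explicit isomorphism between the two leftmost factors\dots this is the genuinely delicate step'' and never construct it. That step \emph{is} the uniqueness assertion; everything before it is set-up, so what you have is an outline. For the record, the step can be finished inside your framework, and by a collapse rather than a vertex-class matching: if two maximal members $W_1,W_2$ are disjoint, then $W_1\cup W_2=V(G)$ is forced (otherwise the split $(A_2,B_2)$ restricted to $V(G)\setminus(W_1\cup W_2)$ decomposes the leftmost factor $G[V(G)\setminus W_1]$ over $G[W_2]$, contradicting maximality of $W_1$); if they intersect, their union is $V(G)$ by your closure argument. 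In either configuration the two splits' requirements clash pairwise: a vertex of $B_1$ would have to dominate $W_1\supseteq A_2$ while $A_2$ isolates $W_2\supseteq B_1$, and symmetrically, so $B_1=\emptyset$ or $A_2=\emptyset$, and $A_1=\emptyset$ or $B_2=\emptyset$. Combined with indecomposability this pins both leftmost factors to be $K_1$ of the same splitted type and makes the two remainders isomorphic (in the covering case via the map fixing $W_1\cap W_2$ and exchanging the two leftover vertices; in the disjoint case $G$ is $2K_1$ or $K_2$). Without some such analysis the induction never closes. A second, quieter gap runs through both halves: you pass silently between ``indecomposable as a splitted graph'' (what your minimality-plus-associativity argument and your maximality criterion actually deliver) and ``indecomposable as a graph'' (what the theorem and the paper's definition of canonical components require). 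These notions do coincide for split graphs, but that equivalence is itself a lemma needing proof, with real edge cases when the right factor of a graph decomposition is complete or edgeless.

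Separately, the lemma in your closing paragraph is misjustified. ``No universal and no isolated vertex'' does \emph{not} imply that a split graph has a unique clique/independent-set partition: take the clique $\{b_1,b_2,b_3\}$, the independent set $\{a_1,a_2\}$, and cross edges $a_1b_1$, $a_2b_2$ only; no vertex is universal or isolated, yet moving $b_3$ to the independent side gives a second partition. (That graph is decomposable --- $b_3$ splits off --- as it must be.) The statement you need is true, but it requires the stronger consequence of indecomposability that the paper itself uses: by Theorem~\ref{thm: A4 indecomp}, every vertex of a nontrivial indecomposable split graph lies in an induced $P_4$, which is precisely how the paper proves its Lemma~\ref{lem: unique split ptn} (see also Observation~\ref{obs: nbr and non}); your weaker hypothesis does not suffice.
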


Since \textup{($*$)} is the unique expression of $G$ as a composition of indecomposable components, up to isomorphism of the components, we call \textup{($*$)} the \emph{canonical decomposition} of $G$.

Note that if a graph $H$ is decomposable, then we may express it as $(H_1,A,B) \circ H_0$ for some induced subgraphs $H_1$ and $H_0$. Then $\overline{H}=(\overline{H_1},B,A) \circ \overline{H_0}$, so $\overline{H}$ is decomposable as well. This leads us to the following.

\begin{obs}\label{obs: complement decompositions}
If $G$ has canonical decomposition $(G_k, A_k, B_k) \circ \dots \circ (G_1, A_1, B_1) \circ G_0$, then the canonical decomposition of $\overline{G}$ is $(\overline{G_k}, B_k, A_k) \circ \dots \circ (\overline{G_1}, B_1, A_1) \circ \overline{G_0}$.
\end{obs}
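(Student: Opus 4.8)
The plan is to reduce the statement to a single identity describing how complementation interacts with the composition operation $\circ$, and then to appeal to the uniqueness half of Theorem~\ref{thm: canonical decomp is unique}. The identity I would establish first is
\[\overline{(G,A,B) \circ H} = (\overline{G}, B, A) \circ \overline{H},\]
for any splitted graph $(G,A,B)$ and any graph $H$ on a disjoint vertex set. This is exactly the fact sketched in the paragraph preceding the statement, and I would verify it by a direct adjacency check across four cases: edges within $V(G)$ become those of $\overline{G}$, edges within $V(H)$ become those of $\overline{H}$, the complete set of edges between $B$ and $V(H)$ disappears, and the empty set of edges between $A$ and $V(H)$ becomes complete. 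Reading these off shows the right-hand side is formed from $\overline{G} + \overline{H}$ by joining the clique of $(\overline{G},B,A)$---namely $A$---to all of $V(\overline{H})$, which is precisely the composition displayed. I would also note in passing that $(\overline{G},B,A)$ is a legitimate splitted graph: since $A$ is independent and $B$ is a clique in $G$, the set $A$ is a clique and $B$ is independent in $\overline{G}$, so swapping the roles of $A$ and $B$ is forced.

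With the single-composition identity in hand, I would obtain the full decomposition by associativity and induction on $k$. Writing $G = (G_k,A_k,B_k) \circ H$ where $H = (G_{k-1},A_{k-1},B_{k-1}) \circ \dots \circ G_0$, one application of the identity peels off the leftmost factor as $(\overline{G_k},B_k,A_k)$ and leaves $\overline{H}$, to which the inductive hypothesis applies. The result is
\[\overline{G} = (\overline{G_k},B_k,A_k) \circ \dots \circ (\overline{G_1},B_1,A_1) \circ \overline{G_0},\]
with the order of the factors unchanged.

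It remains to argue that this expression is genuinely the canonical decomposition of $\overline{G}$, rather than merely \emph{some} expression of $\overline{G}$ as a composition; this is where the uniqueness in Theorem~\ref{thm: canonical decomp is unique} is invoked, and it is the only step requiring care. For that I need each $(\overline{G_i},B_i,A_i)$ to be an indecomposable splitted graph and $\overline{G_0}$ to be an indecomposable graph. The identity supplies this too: applying it to an arbitrary graph and then again to its complement (and using $\overline{\overline{H}}=H$) shows that a graph is decomposable if and only if its complement is, and the same argument at the level of splitted graphs shows $(\overline{G_i},B_i,A_i)$ is decomposable if and only if $(G_i,A_i,B_i)$ is. Hence complementation carries indecomposable components to indecomposable components, and by uniqueness the displayed composition must coincide with the canonical decomposition of $\overline{G}$. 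The main obstacle is thus not any hard computation but rather pinning down the notion of indecomposability for splitted graphs and confirming that the complementation identity preserves it; once that is settled, the observation follows immediately.
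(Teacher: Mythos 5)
Your proposal is correct and follows essentially the same route as the paper: the paper's justification is precisely the identity $\overline{(G,A,B)\circ H}=(\overline{G},B,A)\circ\overline{H}$ stated in the paragraph preceding the observation, from which decomposability (hence indecomposability) is preserved under complementation, and uniqueness in Theorem~\ref{thm: canonical decomp is unique} finishes the argument. You have merely spelled out the induction and the uniqueness step that the paper leaves implicit.
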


In~\cite{A4structure} West and the author gave a useful tool for recognizing canonically indecomposable graphs.

\begin{thm}[\cite{A4structure}]\label{thm: A4 indecomp}
A graph $G$ is canonically indecomposable if and only if for every pair $u,v$ of distinct vertices in $G$, there exists a sequence of induced subgraphs $H_1,\dots,H_t$ of $G$ such that
\begin{enumerate}
\item[\textup{(i)}] $u$ is a vertex of $H_1$, and $v$ is a vertex of $H_t$;
\item[\textup{(ii)}] consecutive subgraphs in the sequence have at least one common vertex;
\item[\textup{(iii)}] each $H_i$ is isomorphic to $2K_2$, $C_4$, or $P_4$.
\end{enumerate}
\end{thm}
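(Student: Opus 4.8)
The plan is to prove the two implications separately, passing to contrapositives where convenient, and to exploit the fact that $\{2K_2,C_4,P_4\}$ is closed under complementation together with Observation~\ref{obs: complement decompositions}. Throughout, call an induced subgraph isomorphic to $2K_2$, $C_4$, or $P_4$ a \emph{link}, and say $u,v$ are \emph{joined by a chain} when a sequence $H_1,\dots,H_t$ as in the statement exists. Being joined by a chain, together with equality, is an equivalence relation on $V(G)$; since each of $2K_2$, $C_4$, $P_4$ is the complement of one of them, $G$ and $\overline{G}$ have the same links on each vertex set and hence the same equivalence classes, and by Observation~\ref{obs: complement decompositions} they are simultaneously decomposable or indecomposable. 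Thus it suffices to treat whichever of $G,\overline{G}$ is convenient at each step. Note also that all four vertices of a single link lie in one class, a fact I will use in the hard direction.

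The easier implication is that if every pair of distinct vertices is joined by a chain, then $G$ is indecomposable; I would prove the contrapositive. The key step is a non-crossing lemma: if $G=(G_1,A,B)\circ H_0$ is any composition with $V(G_1)$ and $V(H_0)$ nonempty, then no link meets both $V(G_1)$ and $V(H_0)$. Indeed, if a link $H$ met both, set $A'=V(H)\cap A$, $B'=V(H)\cap B$, and $W'=V(H)\cap V(H_0)$; since $H$ is induced, $A'$ is independent and $B'$ is a clique, while $B'$ is completely joined to $W'$ and $A'$ completely nonadjacent to it. Hence $H=(H[A'\cup B'],A',B')\circ H[W']$ is a nontrivial composition, so $H$ is decomposable. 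But none of $2K_2$, $C_4$, $P_4$ has a dominating or an isolated vertex, and a direct check of the remaining few partitions shows each is indecomposable---a contradiction. Now if $G$ is decomposable, then by Theorem~\ref{thm: canonical decomp is unique} its canonical decomposition has $k\geq 1$; applying the non-crossing lemma to the outermost composition and iterating inward shows every link, and hence every chain, stays within a single canonical component. Choosing $u$ and $v$ in distinct components then exhibits a pair joined by no chain.

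The substantial implication is the converse, that indecomposability forces every two vertices to be joined by a chain; I would again argue the contrapositive. Assuming at least two equivalence classes, put one class in $P$ and the rest in $Q$, so that $V(G)=P\sqcup Q$ with both parts nonempty and no link meeting both (each link lies in a single class). The goal is to promote this ``linkless'' partition into a composition, i.e.\ to find a proper nonempty $Y$ such that every vertex outside $Y$ is adjacent to all of $Y$ or to none of it, with the all-neighbors forming a clique and the none-neighbors an independent set; by the definition of $\circ$ this certifies decomposability. I would establish such a $Y$ by induction on $|V(G)|$, analyzing the \emph{interface} between $P$ and $Q$ via the sets $N_G(q)\cap P$ for $q\in Q$. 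Forbidding crossing links constrains this interface sharply: two nonadjacent vertices of $P$ and two nonadjacent vertices of $Q$ can form neither $2K_2$ nor $C_4$, which forces nesting among the neighborhoods of pairwise-nonadjacent vertices, and the complementation symmetry gives the dual statement for adjacent pairs. From this I would extract an extremal vertex (with largest or smallest interface neighborhood), split it or its class off as $Y$, and close the induction.

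The main obstacle is precisely this last step: converting ``no link crosses the partition'' into the clique/independent split demanded by $\circ$. The delicate point, which blocks a purely local argument, is that a vertex attached to the two central vertices of an induced $P_4$ lies in \emph{no} link through itself (the five-vertex configurations are all paws or $P_3+K_1$); so a vertex may avoid every crossing link yet remain entangled with the other block only through the global chain structure. Resolving this requires combining several links together with the adjacency constraints among the $A_i$- and $B_i$-type vertices that Theorem~\ref{thm: canonical decomp is unique} imposes, rather than inspecting one forbidden configuration at a time. I expect the bookkeeping of the interface analysis, tamed by the complementation symmetry and the inductive setup above, to be where the real effort lies.
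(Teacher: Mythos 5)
Your proposal proves only half of the theorem. The first implication (chains for all pairs $\Rightarrow$ indecomposable, via the contrapositive) is correct and complete: the non-crossing lemma is exactly right, since an induced $2K_2$, $C_4$, or $P_4$ meeting both sides of a composition $(G_1,A,B)\circ H_0$ would inherit a nontrivial composition, and these three graphs are easily checked to be indecomposable; iterating through the canonical decomposition (or, more simply, using any single nontrivial composition, so that Theorem~\ref{thm: canonical decomp is unique} is not even needed here) then separates a pair of vertices by showing every chain stays on one side. Note that the paper itself offers no argument to compare against: Theorem~\ref{thm: A4 indecomp} is quoted from \cite{A4structure} without proof, so your attempt must stand on its own.

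It does not, because the converse --- indecomposability forces every pair to be joined by a chain --- is where the substance of the theorem lies, and your treatment of it is a declaration of intent rather than a proof. Everything from ``I would establish such a $Y$ by induction'' onward is unexecuted: no induction hypothesis is formulated, the ``extremal vertex'' is never produced, and you yourself exhibit the obstruction (a vertex adjacent to exactly the two midpoints of an induced $P_4$ lies on no link among those five vertices, so no local inspection can assign it to $A$, $B$, or $V(H_0)$) without overcoming it. Moreover, the one structural claim you do assert about the interface is inaccurate as stated: for nonadjacent $p_1,p_2\in P$ and nonadjacent $q_1,q_2\in Q$, forbidding crossing links rules out not only the matching ($2K_2$) and full-bipartite ($C_4$) patterns but also the three-cross-edge pattern, which induces a $P_4$; the allowed configurations are ``one of the two trace-neighborhoods is empty, or both are equal singletons,'' which is strictly stronger than nesting --- and nesting alone would not suffice to extract the clique/independent-set split that $\circ$ requires. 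Until the construction of $Y$ from a disconnected chain-relation is actually carried out, the hard direction remains open, and the proposal establishes only the easy half of the equivalence.
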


We now use the canonical decomposition in characterizing the $\{2K_2,C_4,\text{chair}\}$-free graphs. In light of the conditions of the canonical decomposition and Theorem~\ref{thm: pseudo-split char}, the following is immediate.

\begin{cor} \label{cor: split,pseudo-split chars}
Let $G$ be a graph with canonical decomposition $(G_k,A_k,B_k) \circ \dots \circ (G_1,A_1,B_1)\circ G_0$.
 \begin{enumerate}
 \item[\textup{(a)}] $G$ is split if and only if $G_0$ is split.
 \item[\textup{(b)}] $G$ is $\{2K_2,C_4\}$-free if and only if $G_0$ is split or $G_0$ is isomorphic to $C_5$.
\end{enumerate}
\end{cor}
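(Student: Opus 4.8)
The plan is to prove each biconditional by splitting it into its two implications and to lean throughout on two features of the canonical decomposition. First, the class of split graphs and the class of $\{2K_2,C_4\}$-free graphs are each hereditary: the former by the forbidden-subgraph description in Theorem~\ref{thm: split forb sub}, the latter by the very definition of the $\{2K_2,C_4\}$-free property. Since $G_0$ is an induced subgraph of $G$, this at once yields the ``only if'' direction of (a) and reduces the analysis of $G_0$ in (b) to that of an indecomposable pseudo-split graph. Second, from the adjacency rules recorded just before the corollary—each vertex of $B_i$ is adjacent to all of $\bigcup_{j<i}V(G_j)$ and each vertex of $A_i$ to none of it—I would extract the workhorse observation that $A_k\cup\dots\cup A_1$ is an independent set and $B_k\cup\dots\cup B_1$ is a clique: distinct $A_i$'s are mutually nonadjacent while each $A_i$ is internally independent, and distinct $B_i$'s are completely joined while each $B_i$ is internally complete.

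For part (a), the ``only if'' direction is heredity as above. For ``if,'' suppose $G_0$ is split with independent set $A_0$ and clique $B_0$. I would then take the independent set of $G$ to be $A_k\cup\dots\cup A_0$ and its clique to be $B_k\cup\dots\cup B_0$; the workhorse observation, applied with $G_0$'s own parts appended (each $B_i$ is joined to $B_0\subseteq V(G_0)$ and each $A_i$ avoids $A_0\subseteq V(G_0)$), shows these partition $V(G)$ into an independent set and a clique, so $G$ is split. The same construction settles the ``if'' direction of (b) when $G_0$ is split, a split graph being in particular $\{2K_2,C_4\}$-free. In the remaining case $G_0\cong C_5$—so that $G_0$ contributes no split parts $A_0,B_0$—I would verify the hypotheses of Theorem~\ref{thm: pseudo-split char} for $G$ directly, with $A=A_k\cup\dots\cup A_1$, $B=B_k\cup\dots\cup B_1$, and $C=V(G_0)$: the workhorse observation gives that $A$ is independent and $B$ a clique, while $G[C]\cong C_5$ and the adjacency rules make every vertex of $C=V(G_0)$ adjacent to all of $B$ and to none of $A$. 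Theorem~\ref{thm: pseudo-split char} then certifies that $G$ is $\{2K_2,C_4\}$-free.

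The step I expect to carry the real content is the ``only if'' direction of (b). Here heredity gives that $G_0$ is $\{2K_2,C_4\}$-free, so by Theorem~\ref{thm: pseudo-split char} its vertex set partitions into $A$, $B$, $C$ with $A$ independent, $B$ a clique, $C$ empty or inducing $C_5$, and every $C$-vertex adjacent to all of $B$ and to none of $A$. The crux is to recognize that this partition is exactly a Tyshkevich composition, namely $G_0=(G_0[A\cup B],A,B)\circ G_0[C]$: reconstructing the composition inserts precisely the $B$--$C$ edges (all present) and no $A$--$C$ edges (none present), and $(G_0[A\cup B],A,B)$ is a splitted graph by the partition. Because $G_0$ is indecomposable, one of the two factors must be empty. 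If $C=\emptyset$ then $G_0=G_0[A\cup B]$ is split; otherwise $A\cup B=\emptyset$ and $G_0=G_0[C]\cong C_5$. Seeing that the $A,B,C$ partition of Theorem~\ref{thm: pseudo-split char} is literally a composition is the one point that is not purely mechanical, and it is what makes the corollary follow immediately.
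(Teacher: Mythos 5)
Your proof is correct and follows exactly the route the paper intends: the paper states this corollary as ``immediate'' from the adjacency conditions of the canonical decomposition together with Theorem~\ref{thm: pseudo-split char}, and your argument---heredity for the ``only if'' directions, the clique/independent-set structure of $\bigcup B_i$ and $\bigcup A_i$ for the ``if'' directions, and the recognition that the $A,B,C$ partition of Theorem~\ref{thm: pseudo-split char} is literally a Tyshkevich composition, contradicting indecomposability of $G_0$ unless $A\cup B$ or $C$ is empty---is precisely the filling-in of those details. No gaps.
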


Chair-free split graphs have been studied before; we present the result of Brandst\"{a}dt and Mosca~\cite{BrandstadtMosca}.

A \emph{module} $M$ in a graph $G$ is a subset of $V(G)$ with the property that every vertex not in $M$ is adjacent to either all or none of the vertices of $M$. We say that $G$ is \emph{prime} if each of its modules consists of a single vertex or is equal to $V(G)$.

\begin{defn}\label{defn: prime spider}
A \emph{(prime) spider} is a graph whose vertex set admits a partition into sets $A$, $B$, and $C$ such that
\begin{enumerate}
\item[\textup{(i)}] $A$ is an independent set, $B$ is a complete graph, and $|A|=|B| \geq 2$;
\item[\textup{(ii)}] $|C|\leq 1$, and a vertex in $C$ is adjacent to every vertex in $B$ and to no vertex in $A$;
\item[\textup{(iii)}] there is a bijection $f:A \to B$ such that either \textup{(a)} each vertex $v$ in $A$ is adjacent only to $f(v)$, or \textup{(b)} each vertex $v$ in $A$ to adjacent to all vertices of $B$ except $f(v)$.
\end{enumerate}
\end{defn}

We call vertices in $A$ the \emph{feet} and vertices in $B$ the \emph{body vertices} of the spider. The vertex in $C$, if it exists, is called the \emph{head}; if $C$ is empty, the spider is \emph{headless}.

\begin{thm}[\cite{BrandstadtMosca}] \label{thm: BrandstadtMosca}
If a chair-free split graph is prime and has more than one vertex, then it is a prime spider.
\end{thm}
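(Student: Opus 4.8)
The plan is to fix a split partition of $G$, convert the two hypotheses into combinatorial conditions on the adjacencies between the clique part and the independent part, and then read off the spider structure from those conditions. Throughout I would set aside the graphs on at most two vertices (handled by inspection) and use primeness in the form: a pair or larger set of vertices that forms a module must be trivial. First I would choose a split partition $V(G)=K\cup I$ with $K$ a clique and $I$ independent and with $|K|$ as large as possible, so that no vertex of $I$ is adjacent to all of $K$. Primeness then yields that $G$ has no isolated and no dominating vertex (each would make the rest of $V(G)$ a nontrivial module), and that distinct vertices of $I$ have distinct neighborhoods in $K$ while distinct vertices of $K$ have distinct neighborhoods in $I$. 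I would put $C=\{v\in K: N_G(v)\cap I=\emptyset\}$; two such vertices would form a module of size two, so $|C|\le 1$, and $C$ will be the head. Setting $B=K\setminus C$ and $A=I$, a vertex of $C$ is automatically adjacent to all of $B$ and to none of $A$, giving condition (ii) of Definition~\ref{defn: prime spider}. It remains to produce the bijection $f$ and the thin/thick dichotomy between $A$ and $B$.

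The central tool, which I would prove first, is a description of the induced chairs of a split graph. Writing $M(b)=N_G(b)\cap A$ for $b\in B$, one checks that in any induced chair the center and the degree-$2$ vertex both lie in $K$ while the other three vertices lie in $I$: the center cannot lie in $I$ (its three pairwise-nonadjacent neighbors cannot all lie in the clique), the unique non-neighbor of the center lies in $I$, and the degree-$2$ vertex, being adjacent to it, is forced into $K$. Hence an induced chair exists precisely when there are $d,e\in B$ with $|M(d)\setminus M(e)|\ge 2$ and $|M(e)\setminus M(d)|\ge 1$. Chair-freeness is therefore equivalent to the condition
\[ \text{for all } d,e\in B,\qquad M(e)\not\subseteq M(d)\ \Longrightarrow\ |M(d)\setminus M(e)|\le 1. \tag{$\star$}\]
In particular, any two incomparable members of $\{M(b):b\in B\}$ differ by exactly one element on each side and so have equal size.

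The main obstacle is that $(\star)$ alone still permits chains $M(p)\subsetneq M(q)$, and ruling these out is exactly where primeness must intervene; I expect this to be the hardest step. I would show that the sets $M(b)$ all have the same size. Suppose not, let $s$ be the minimum size, and note that by $(\star)$ a size-$s$ set cannot be incomparable to a larger set (incomparable sets are equal in size), so each size-$s$ set is contained in every larger $M(b)$. Writing $U$ for the union of the size-$s$ sets, every larger $M(b)$ then contains $U$, and I would verify that
\[ W=U\cup\{\,b\in B:\ |M(b)|=s\,\}\cup C \]
is a module: each larger body is adjacent to all of $W$ (to the small bodies and the head through the clique, and to the feet of $U$ since $U\subseteq M(b)$), while each foot outside $U$ is adjacent to none of $W$. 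Since larger bodies exist, $W$ is a nontrivial module, contradicting primeness. Hence the sets $M(b)$ have constant size and, being of equal size, cannot be nested; so $\{M(b):b\in B\}$ is an antichain.

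Finally I would classify the antichain. By $(\star)$ its members are equal-size sets meeting pairwise in all but one element, and primeness supplies two further facts: no foot lies in every $M(b)$ (such a foot would enlarge $K$, or be a size-two module with the head), so the sets have empty intersection; and no foot is isolated, so the sets have union $A$. A standard dichotomy for such a set family then leaves only two cases. If the common size is $1$, the $M(b)$ are distinct singletons covering $A$, giving a perfect matching $f\colon A\to B$ in which each foot is adjacent only to its partner — the thin spider of (iii)(a). If the common size is at least $2$, the empty-intersection condition forbids a common core, forcing the $M(b)$ to be exactly the co-singletons $A\setminus\{f(b)\}$; this yields a bijection $f$ with each foot adjacent to every body but its partner — the thick spider of (iii)(b). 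In both cases $|A|=|B|$, and since $G$ is not one of the excluded small graphs we have $|A|=|B|\ge 2$, so $G$ is a prime spider.
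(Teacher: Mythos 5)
The paper never proves this statement: it is imported as a black box from Brandst\"{a}dt and Mosca~\cite{BrandstadtMosca} and invoked in the proof of Theorem~\ref{thm: chair-free structure}. So there is no internal proof to compare yours against; what you have written is a self-contained replacement for the citation, and after checking it I believe the core argument is correct. Your localization of induced chairs in a split graph is right (for any split partition, the degree-3 and degree-2 vertices must lie in the clique side and the three leaves in the independent side, exactly for the reasons you give), so chair-freeness is indeed equivalent to your condition $(\star)$ on the trace sets $M(b)=N_G(b)\cap A$. Your set $W$ really is a module whenever the sizes $|M(b)|$ are not constant (bodies with larger trace are adjacent to all of $W$, feet outside $U$ to none of it), and it is nontrivial, so primeness forces the traces to form an equal-size antichain with pairwise intersections of size $s-1$, empty total intersection, and union $A$; your two further uses of primeness and maximality of $K$ (no foot adjacent to all bodies, no isolated foot) are also sound. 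The payoff of your route is that the paper could drop its dependence on~\cite{BrandstadtMosca} entirely, at the cost of roughly a page.

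Two caveats. First, the ``standard dichotomy'' is the one load-bearing step you assert without proof. It does hold and takes only a few lines: with at least three members, such a family either has all members containing a common $(s-1)$-set or has all members equal to co-singletons of a common $(s+1)$-set; the first case is excluded by empty intersection once $s\ge 2$, and a family with exactly two members already contradicts empty intersection since $|M_1\cap M_2|=s-1\ge 1$. A complete write-up must include this. Second, ``handled by inspection'' for graphs on at most two vertices conceals a real defect of the statement as printed: under the paper's literal definition of prime, both $K_2$ and $2K_1$ are prime, chair-free, split, and have more than one vertex, yet neither is a prime spider (Definition~\ref{defn: prime spider} forces at least four vertices). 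So the two-vertex graphs are \emph{exceptions}, not verifications, and the theorem is really about prime graphs on at least three vertices. This imprecision originates in the paper's formulation rather than in your argument, and it is harmless for the paper's application, since the quotient $G_i^*$ in the proof of Theorem~\ref{thm: chair-free structure} arises via Gallai's theorem~\cite{Gallai67} from a nontrivial indecomposable graph and therefore has at least four vertices; but your proof should state the exclusion explicitly rather than suggest the small cases check out.
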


The split canonical components of a $\{2K_2,C_4,\text{chair}\}$-free graph need not be prime, in general, so we extend this result. We first give some definitions.

Let $H$ and $J$ be graphs, and let $v$ be a vertex of $J$. To \emph{substitute $H$ for $v$} is to take the disjoint union of $H$ and $J-v$ and add all edges of the form $uw$ where $u \in V(H)$ and $w \in N_J(v)$. A \emph{top-expanded spider} is a graph that can be obtained by substituting complete graphs for body vertices in a headless spider. A \emph{bottom-expanded spider} is a graph obtained by substituting edgeless graphs for feet in a headless spider. Note that the complement of a bottom-expanded spider is a top-expanded spider.

A module is \emph{proper} if it does not contain all vertices of $G$. Given a graph $G$ and a module $M$, to \emph{contract} $M$ to a single vertex is to replace the vertices of $M$ by a vertex having the same neighbors among $V(G)\setminus M$ that vertices in $M$ had; alternatively, we may delete all but one of the vertices of $M$ from $G$.

\begin{thm} \label{thm: chair-free structure}
A graph $G$ with canonical decomposition $G=(G_k,A_k,B_k) \circ \dots \circ (G_1,A_1,B_1) \circ G_0$ is $\{2K_2,C_4,\text{chair}\}$-free if and only if $G_0$ is split or $C_5$, and each split indecomposable component having more than one vertex is a top-expanded spider.
\end{thm}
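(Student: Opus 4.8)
The plan is to separate the argument into a cross-component part and a within-component part, and to isolate the heart of the matter in a lemma about a single canonical component. Since split graphs are $\{2K_2,C_4\}$-free (Theorem~\ref{thm: split forb sub}), every splitted component $(G_i,A_i,B_i)$ is automatically $\{2K_2,C_4\}$-free, and Corollary~\ref{cor: split,pseudo-split chars}(b) already tells us that $G$ is $\{2K_2,C_4\}$-free precisely when $G_0$ is split or $C_5$. Thus the whole theorem reduces to controlling chairs, and I would aim to prove two things: (i) an induced chair in any graph must lie inside a single canonical component; and (ii) an indecomposable split graph on more than one vertex is chair-free if and only if it is a top-expanded spider. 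Granting (i) and (ii), $G$ is chair-free iff each component is chair-free, which, together with the fact that the only non-split option $C_5$ for $G_0$ is itself chair-free and that one-vertex components are trivially chair-free, is exactly the stated condition.

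For (i), I would cut the composition at an arbitrary place and write $G=(\mathrm{Top},A',B')\circ \mathrm{Bot}$, which is legitimate because a composition of splitted graphs is again a splitted graph. Suppose a chair had vertices $X$ in $\mathrm{Top}$ and $Y$ in $\mathrm{Bot}$, with $X,Y$ both nonempty. Every edge between $X$ and $Y$ joins $X\cap B'$ to $Y$, and the bipartite graph between $X\cap B'$ and $Y$ is complete; moreover $X\cap B'$ is a clique and $X\cap A'$ is independent. Connectedness of the chair forces $X\cap B'\neq\emptyset$, and since the chair is a tree it contains no induced $C_4$, so $\min\{|X\cap B'|,|Y|\}=1$. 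A short case analysis against the degree sequence $(3,2,1,1,1)$ then gives a contradiction in every case: a vertex of $B'$ adjacent to two or more vertices of $Y$ would have to be the unique degree-$3$ vertex yet would still violate the required adjacency pattern, and the vertex of $Y$ (when $|Y|=1$) must have a clique neighborhood, which in the chair forces it to be a leaf whose removal leaves $X\cap A'$ non-independent. Hence no chair straddles the cut, and iterating over the cuts of the decomposition places every chair inside one component.

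For (ii), the easy direction is to check directly that a top-expanded spider is split (substituting cliques for body vertices preserves the clique, and the feet remain independent) and chair-free (a structural check on the two spider types). For the converse, let $H$ be indecomposable, split, and chair-free on more than one vertex. I would first show $H$ is connected and co-connected: indecomposability rules out an isolated vertex $v$ (else $H=(\{v\},\{v\},\emptyset)\circ(H-v)$) and, dually, a dominating vertex; $2K_2$-freeness of split graphs rules out two components each carrying an edge; and since $\overline H$ is again split and, by Observation~\ref{obs: complement decompositions}, indecomposable, the same facts applied to $\overline H$ give co-connectedness. A connected, co-connected graph has a prime quotient $Q$ under contraction of its maximal modules, and $Q$, realized on a transversal of the modules, is an induced subgraph of $H$, hence split and chair-free; being prime and on at least four vertices, $Q$ is a prime spider by Theorem~\ref{thm: BrandstadtMosca}.

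It then remains to identify the modules substituted into the vertices of $Q$, and this is where I expect the real work to lie. Replacing every other module by a single representative reduces each local question to a five-vertex induced subgraph of $H$, and I would argue: a body vertex expanded by a module containing a non-edge produces an induced $C_4$, so every body module is a clique; a foot expanded by an edge produces $2K_2$ and a foot expanded by a non-edge produces a chair, so every foot module is a single vertex; and once the body side of $H$ is a clique and the feet independent, a head module $M_h$ can be peeled off as $H=(H-M_h,\text{feet},\text{body})\circ H[M_h]$, contradicting indecomposability, so $Q$ is headless. Thus $H$ is a headless spider with cliques substituted for its body vertices, i.e.\ a top-expanded spider. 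The main obstacle is precisely this last analysis: showing that chair-, $C_4$-, and $2K_2$-freeness together with indecomposability force the admissible expansions to be exactly ``cliques for body vertices, single vertices for feet, and no head,'' and carrying the forbidden-configuration arguments through both the type-(a) and type-(b) spiders.
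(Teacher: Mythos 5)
Your proposal is correct and, at its core, is the same proof as the paper's: the reduction to canonical components, the connectedness/co-connectedness of a nontrivial chair-free split component, Gallai's modular decomposition and contraction to a prime quotient, Theorem~\ref{thm: BrandstadtMosca} to identify that quotient as a prime spider, and the final module analysis (cliques at body vertices to avoid $C_4$, singletons at feet to avoid a chair or $2K_2$, no head on pain of decomposability) coincide step for step with the paper's argument. Where you genuinely differ is the cross-component step, your part (i): the paper dispatches it in one line by observing via Theorem~\ref{thm: A4 indecomp} that the chair is canonically indecomposable, and an indecomposable induced subgraph can never straddle a composition cut, since the cut would induce a decomposition of the subgraph itself; you instead re-prove this containment for the chair by a bare-hands case analysis. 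Your version is self-contained and elementary, but it is the longest part of your write-up, and its one-line summary contains a small inaccuracy: a vertex of $B'$ adjacent to two or more vertices of $Y$ need \emph{not} be the degree-$3$ vertex of the chair --- it can be the degree-$2$ vertex, with $Y$ consisting of the degree-$3$ vertex and the pendant leaf --- although in that case, too, the required adjacency pattern fails (the two remaining leaves lie in $X \cap A'$ yet have neighbors in $Y$), so the contradiction survives; a careful write-up must treat this case rather than fold it into the degree-$3$ claim. Likewise, your connectedness argument via isolated/dominating vertices and $2K_2$-freeness is sound, but the paper gets connectedness of both $G_i$ and $\overline{G_i}$ at once from Theorems~\ref{thm: split forb sub} and~\ref{thm: A4 indecomp} together with the self-complementarity of $P_4$; what your route buys is independence from the $A_4$-structure machinery, at the cost of extra casework.
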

\begin{proof}
Let $G$ be a graph with canonical decomposition $(G_k,A_k,B_k) \circ \dots \circ (G_1,A_1,B_1)\circ G_0$. By Corollary~\ref{cor: split,pseudo-split chars}, $G$ is $\{2K_2,C_4\}$-free if and only if $G_0$ is isomorphic to $C_5$ or split. By Theorem~\ref{thm: A4 indecomp}, the chair is indecomposable. It follows that $G$ is $\{2K_2,C_4,\text{chair}\}$-free if and only if $G_0$ is isomorphic to $C_5$ or split and each canonical component $G_i$ is chair-free. Observing that $K_1$ and $C_5$ are chair-free, as is any top-expanded spider, it suffices to show that each split $G_i$ having more than one vertex is a top-expanded spider if it is $\{2K_2,C_4,\text{chair}\}$-free.

Henceforth assume that $G_i$ is a chair-free split canonical component of $G$ having more than one vertex. Since $P_4$ is self-complementary, it follows from Theorems~\ref{thm: split forb sub} and~\ref{thm: A4 indecomp} that both $G_i$ and $\overline{G_i}$ are connected. By the modular decomposition theorem of Gallai~\cite{Gallai67} the maximal proper modules of $G_i$ are disjoint. Contracting each maximal proper module to a single vertex yields a prime graph $G_i^*$ that is split and chair-free and has more than one vertex. By Theorem~\ref{thm: BrandstadtMosca}, $G^*$ is a prime spider. Let $A,B,C$ be a partition of $V(G^*)$ as in Definition~\ref{defn: prime spider}.

Note that $G_i$ may be obtained by substituting suitable graphs for vertices of $G_i^*$. However, each vertex $a$ of $A$ is the endpoint of an induced $P_4$ in $G_i^*$; substituting a graph with two or more vertices for $a$ creates either a chair or $2K_2$, neither of which is induced in $G_i$. Since each vertex $b$ in $B$ is a midpoint for an induced $P_4$ in $G_i$, substituting a non-complete graph for $b$ creates an induced $C_4$, which $G_i$ does not contain. Finally, note that if $C$ contains a vertex $c$, then $G_i=(G_i[A'\cup B'],A',B')\circ G'$, where $A'$ and $B'$ are the sets of vertices whose contracted versions respectively belong to $A$ and to $B$ in $G_i^*$, and $G'$ is the induced subgraph of $G_i$ whose vertices were contracted to the vertex $c$ in forming $G_i^*$. Since $G_i$ is indecomposable, this is a contradiction unless $C=\emptyset$. Thus $G_i$ is a top-expanded spider, as claimed.
\end{proof}

Since $\{2K_2,C_4\}$ and $\{\text{chair}, \text{kite}\}$ are pairs of complementary graphs, the $\{2K_2,C_4,\text{kite}\}$-free graphs are precisely the complements of the $\{2K_2,C_4,\text{chair}\}$-free graphs. Since $C_5$ is also self-complementary, by Observation~\ref{obs: complement decompositions} and Corollary~\ref{cor: split,pseudo-split chars} the indecomposable kite-free split graphs are precisely the complements of the indecomposable chair-free split graphs.
\begin{cor} \label{cor: kite-free structure}
A graph $G$ with canonical decomposition $G=(G_k,A_k,B_k) \circ \dots \circ (G_1,A_1,B_1) \circ G_0$ is $\{2K_2,C_4,\text{kite}\}$-free if and only if $G_0$ is split or $C_5$, and every split indecomposable component is a bottom-expanded spider.
\end{cor}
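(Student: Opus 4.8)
The plan is to deduce Corollary~\ref{cor: kite-free structure} from Theorem~\ref{thm: chair-free structure} by a direct complementation argument, exactly as the paragraph immediately preceding the corollary suggests. First I would invoke the observation that $\{2K_2, C_4\}$ and $\{\text{chair}, \text{kite}\}$ are complementary pairs: a graph $G$ is $\{2K_2, C_4, \text{kite}\}$-free if and only if its complement $\overline{G}$ is $\{\overline{2K_2}, \overline{C_4}, \overline{\text{kite}}\}$-free, and since $\overline{2K_2} = C_4$, $\overline{C_4} = 2K_2$, and $\overline{\text{kite}} = \text{chair}$, this is precisely the condition that $\overline{G}$ is $\{2K_2, C_4, \text{chair}\}$-free. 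Thus $G$ is $\{2K_2, C_4, \text{kite}\}$-free exactly when $\overline{G}$ satisfies the hypothesis of Theorem~\ref{thm: chair-free structure}.

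Next I would translate the conclusion of Theorem~\ref{thm: chair-free structure} applied to $\overline{G}$ back into a statement about $G$, using Observation~\ref{obs: complement decompositions}. If $G$ has canonical decomposition $(G_k, A_k, B_k) \circ \dots \circ (G_1, A_1, B_1) \circ G_0$, then $\overline{G}$ has canonical decomposition $(\overline{G_k}, B_k, A_k) \circ \dots \circ (\overline{G_1}, B_1, A_1) \circ \overline{G_0}$, so the canonical components of $\overline{G}$ are exactly the complements of the canonical components of $G$. Applying Theorem~\ref{thm: chair-free structure} to $\overline{G}$ tells us that $\overline{G}$ is $\{2K_2, C_4, \text{chair}\}$-free if and only if $\overline{G_0}$ is split or $C_5$, and each split indecomposable component of $\overline{G}$ having more than one vertex is a top-expanded spider. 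The final step is then to dualize each of these two conditions. For the root component, $\overline{G_0}$ is split if and only if $G_0$ is split (the split property is self-complementary, since a clique/independent-set partition of $G_0$ is an independent-set/clique partition of $\overline{G_0}$), and $\overline{G_0} \cong C_5$ if and only if $G_0 \cong C_5$ because $C_5$ is self-complementary. For the remaining components, the note preceding the corollary records that the indecomposable kite-free split graphs are precisely the complements of the indecomposable chair-free split graphs, together with the remark in Section~\ref{sec: two} that the complement of a bottom-expanded spider is a top-expanded spider; hence a split component $G_i$ of $G$ is a bottom-expanded spider exactly when $\overline{G_i}$ is a top-expanded spider.

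There is no genuine obstacle here, since the result is a formal complementation of an already-proved theorem; the only point requiring a little care is the bookkeeping that complementation reverses the roles of the independent set $A_i$ and clique $B_i$ in each splitted component while preserving indecomposability, so that ``split indecomposable component of $\overline{G}$'' corresponds cleanly to ``split indecomposable component of $G$.'' I would also note the harmless discrepancy in phrasing between the two statements: Theorem~\ref{thm: chair-free structure} restricts attention to components ``having more than one vertex'' because a one-vertex component is trivially a top-expanded spider, whereas the corollary's phrase ``every split indecomposable component'' likewise holds vacuously or trivially for singleton components (a single edgeless vertex being a degenerate bottom-expanded spider), so the two formulations agree. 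Assembling these equivalences yields precisely the stated characterization of $\{2K_2, C_4, \text{kite}\}$-free graphs.
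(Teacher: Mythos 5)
Your proposal is correct and follows essentially the same route as the paper, which derives the corollary from Theorem~\ref{thm: chair-free structure} by exactly this complementation argument: the self-complementarity of $\{2K_2,C_4\}\cup\{\text{chair},\text{kite}\}$ and of $C_5$, Observation~\ref{obs: complement decompositions}, and the fact that complementation exchanges top-expanded and bottom-expanded spiders. Your extra remark about single-vertex components addresses a minor wording discrepancy in the paper's own statement and does not change the substance.
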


\section{More general vertex expansions} \label{sec: three}
In this section we introduce graph families generated by relaxing some of the conditions in the results from the previous section. One family in particular will be useful in studying the structure of hereditary unigraphs in Section~\ref{sec: four}.

By Theorem~\ref{thm: chair-free structure} and Corollary~\ref{cor: kite-free structure}, pseudo-split graphs that forbid both the chair and the kite have canonical components that are each one of $C_5$, a single vertex, or a headless spider. Removing the chair from the list of forbidden subgraphs permits the expansion (into cliques) of body vertices in the canonical components that are spiders; if we instead remove the kite from the list of forbidden subgraphs, we may expand feet vertices of the spiders into independent sets. Removing \emph{both} the chair and the kite from the list results in the complete class of pseudo-split graphs, including graphs that cannot be created by expanding vertices in spiders. Let $\G$ be the class of graphs that may be obtained from $\{2K_2,C_4,\text{chair},\text{kite}\}$-free graphs when both types of expansion are simultaneously allowed, that is, when complete graphs are substituted for body vertices and edgeless graphs are substituted for feet vertices in arbitrary spider canonical components.

\begin{figure}
\centering
\begin{pspicture}(9.25,-0.6)(14,1)
% R
\cnode*(9.25,1){3pt}{W} \cnode*(10.25,1){3pt}{Z} \cnode*(11.25,1){3pt}{AA} \cnode*(9.25,0){3pt}{BB} \cnode*(10.25,0){3pt}{CC} \cnode*(11.25,0){3pt}{DD}
\ncline{-}{W}{Z} \ncline{-}{Z}{AA} \ncline{-}{Z}{CC} \ncline{-}{Z}{DD} \ncline{-}{AA}{DD} \ncline{-}{BB}{CC} \ncline{-}{CC}{DD}
\uput[d](10.25,-0.125){\fontsize{7pt}{7pt}$R$}
% R-bar
\cnode*(12,1){3pt}{EE} \cnode*(13,1){3pt}{FF} \cnode*(14,1){3pt}{GG} \cnode*(12,0){3pt}{HH} \cnode*(13,0){3pt}{II} \cnode*(14,0){3pt}{JJ}
\ncline{-}{EE}{FF} \ncline{-}{EE}{II} \ncline{-}{FF}{GG} \ncline{-}{FF}{II} \ncline{-}{FF}{JJ} \ncline{-}{GG}{JJ} \ncline{-}{HH}{II} \ncline{-}{HH}{JJ}
\uput[d](13,-0.05){\fontsize{7pt}{7pt}$\overline{R}$}
\end{pspicture}
\caption{The graphs $R$ and $\overline{R}$.}
\label{fig: R,R-bar}
\end{figure}
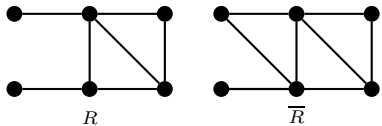
\begin{thm} \label{thm: R,R-bar}
The class $\G$ is precisely the set of $\{2K_2,C_4,R, \overline{R}\}$-free graphs, where $R$ and $\overline{R}$ are the graphs shown in Figure~\ref{fig: R,R-bar}.
\end{thm}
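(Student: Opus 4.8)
The plan is to pass to the canonical decomposition and reduce the statement to a single split indecomposable component. First I would record that $R$ and $\overline{R}$ are split and, by Theorem~\ref{thm: A4 indecomp}, canonically indecomposable ($R$ admits the required sequence of induced $2K_2$'s, $C_4$'s, and $P_4$'s, and $\overline{R}$ follows from Observation~\ref{obs: complement decompositions}). An indecomposable induced subgraph $H$ of a composition $(G',A,B)\circ G''$ must lie wholly in $G'$ or in $G''$: otherwise the parts $H\cap G'$ and $H\cap G''$ would themselves express $H$ as a composition. Hence both membership in $\G$ and being $\{R,\overline{R}\}$-free are determined componentwise. Combining Theorem~\ref{thm: chair-free structure} with Corollary~\ref{cor: kite-free structure} and then performing the permitted expansions, $G\in\G$ if and only if $G_0$ is split or $C_5$ and every split canonical component with more than one vertex is an \emph{expanded spider} (a headless spider with complete graphs substituted for body vertices and edgeless graphs for feet). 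Since by Corollary~\ref{cor: split,pseudo-split chars}(b) being $\{2K_2,C_4\}$-free is equivalent to $G_0$ being split or $C_5$, and $C_5$ has no induced $R$ or $\overline{R}$, the theorem reduces to the claim that a split indecomposable graph with more than one vertex is an expanded spider if and only if it is $\{R,\overline{R}\}$-free.

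For the forward implication I would check that expanded spiders are $\{R,\overline{R}\}$-free. The class of expanded spiders is closed under complementation (complementing turns the body clique into an independent set of feet and vice versa, interchanging the two spider types), and $\overline{R}$ is the complement of $R$, so it suffices to rule out an induced $R$. This is a direct verification: in an expanded spider the feet split into classes with pairwise disjoint body-neighborhoods (or, in the complementary type, pairwise disjoint body-non-neighborhoods), while a triangle of $R$ must use at least two body vertices of the spider and a non-adjacent triple at least two feet; a short analysis of the possible roles then contradicts this disjointness, since in $R$ the neighborhoods satisfy $N(W)\subsetneq N(AA)$ with $N(BB)$ disjoint from both.

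The substance is the reverse implication: a split indecomposable $\{R,\overline{R}\}$-free graph $G_i$ with more than one vertex is an expanded spider. As in the proof of Theorem~\ref{thm: chair-free structure}, $G_i$ and $\overline{G_i}$ are connected and the maximal proper modules are disjoint (Gallai~\cite{Gallai67}), so contracting them yields a prime split graph $G_i^*$ with more than one vertex. Assuming $G_i^*$ is shown to be chair-free, Theorem~\ref{thm: BrandstadtMosca} makes it a prime spider, and indecomposability of $G_i$ forces it to be headless (a head would peel off as a composition factor). Finally, because $G_i$ is $\{2K_2,C_4\}$-free, every module over a body vertex is complete and every module over a foot is edgeless: a non-edge inside a body-module, together with a matched foot and a second body vertex, would induce a $C_4$, and dually an edge inside a foot-module would induce a $2K_2$. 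Thus $G_i$ is recovered from $G_i^*$ by substituting complete graphs for body vertices and edgeless graphs for feet, so $G_i$ is an expanded spider.

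The main obstacle is the lemma that a prime split $\{R,\overline{R}\}$-free graph is chair-free (equivalently, by complementation, kite-free). Unlike in Section~\ref{sec: two}, the chair is not among the forbidden subgraphs, so chair-freeness cannot simply be inherited; instead primeness must be exploited directly. Given an induced chair in $G_i^*$, its vertices do not form a module, so some further vertex distinguishes the otherwise-equivalent vertices of the chair, and I expect a case analysis of how such distinguishing vertices attach to the chair to always complete it to an induced $R$ or $\overline{R}$. Carrying out this extension argument exhaustively---confirming that the nested-neighborhood pattern exhibited by $R$ is precisely the obstruction to the spider structure---is the crux of the proof.
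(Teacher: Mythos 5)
Your overall architecture matches the paper's: reduce to a single split indecomposable component using the indecomposability of $R$ and $\overline{R}$, contract modules until Brandst\"adt--Mosca (Theorem~\ref{thm: BrandstadtMosca}) applies, rule out a head by indecomposability, and recover $G_i$ by substituting cliques and independent sets. Your reconstruction step is sound but organized differently: you pass to the full Gallai prime quotient and therefore need the separate $C_4$/$2K_2$ argument to show that the module contracted to each body vertex is complete and the module contracted to each foot is edgeless, whereas the paper contracts only the modules wholly contained in $A_i$ or in $B_i$, so the contracted pieces are cliques and independent sets automatically and the reconstruction is immediate. That difference is cosmetic; either route works.

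The genuine gap is the step you yourself flag as the crux and then leave as an expectation: the lemma that the contracted (prime) split $\{R,\overline{R}\}$-free graph is chair-free. Writing ``I expect a case analysis \dots to always complete it to an induced $R$ or $\overline{R}$'' is not a proof, and this lemma is exactly the new content of the theorem --- everything else is recycled from Section~\ref{sec: two}. The missing argument is short, and it is precisely what the paper supplies. Take an induced chair on $a,b,c,d,e$ with $a,b,c$ in the independent side and $d,e$ in the clique side, $e$ adjacent to $a$ and $d$ adjacent to $b$ and $c$; this placement is forced, since $\{d,e\}$ is the only clique of the chair whose complement in the chair is independent. The twin leaves $b,c$ do not form a module (by primeness in your setup, or by the choice of contraction in the paper's), so some vertex $x$ is adjacent to exactly one of them, say to $c$ and not $b$. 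Since the independent side contains no neighbor of $c$, necessarily $x$ lies in the clique side and $x \notin \{d,e\}$, so $x$ is adjacent to both $d$ and $e$. Then the six vertices $a,b,c,d,e,x$ induce $R$ if $x$ is not adjacent to $a$, and $\overline{R}$ if $x$ is adjacent to $a$ --- just two cases, not the exhaustive attachment analysis you anticipate. Complement symmetry of the set $\{2K_2,C_4,R,\overline{R}\}$ then yields kite-freeness, and Theorem~\ref{thm: chair-free structure} together with Corollary~\ref{cor: kite-free structure} forces the quotient to be a headless spider. Without this paragraph your proposal identifies the right obstruction but does not prove the theorem.
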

\begin{proof}
Let $G$ be a graph with canonical decomposition $G_k(A_k,B_k) \circ \dots \circ G_1(A_1,B_1) \circ G_0$.

Suppose first that $G_0$ is split or isomorphic to $C_5$, and that each indecomposable split component $G_i$ is isomorphic to $K_1$ or can be obtained from a headless spider by substituting complete graphs for body vertices and edgeless graphs for feet vertices. Neither $R$ nor $\overline{R}$ has this form; it follows that neither can be induced in $G_i$. Since $R$ and $\overline{R}$ are also indecomposable, we conclude that $G$ is $\{2K_2,C_4,R,\overline{R}\}$-free.

Suppose instead that $G$ is $\{2K_2,C_4,R,\overline{R}\}$-free. By Corollary~\ref{cor: split,pseudo-split chars}, $G_0$ is split or isomorphic to $C_5$. Let $(G_i, A_i, B_i)$ be a splitted canonical component such that $G_i \ncong K_1$. Consider the modules of $G_i$ that are wholly contained in $A_i$ or in $B_i$. Form $G'_i$ by contracting to single vertices the maximal elements of this collection under subset inclusion. Then $G'_i$ is a split graph having a partition $A'_i, B'_i$ into an independent set and a clique, where these are the sets resulting from $A_i$ and $B_i$ after the modules are contracted. Suppose that $G'_i$ induces the chair on vertices $a,b,c,d,e$, with $a,b,c \in A'_i$ and $d,e \in B'_i$, with $e$ adjacent to $a$, and $d$ adjacent to $b$ and $c$. Because of the module contractions, $\{b,c\}$ is not a module in $G'_i$. Assume $c$ is adjacent to a vertex $x$ in $B'_i$ to which $b$ is not. Recalling that $A'_i$ is an independent set and $B'_i$ is a clique, we find that $G'_i[\{a,b,c,d,e,x\}]$ is isomorphic to $R$ or $\overline{R}$, depending on whether $x$ is adjacent to $a$. This is a contradiction, since $G'_i$ is an induced subgraph of $G_i$, which was assumed to be $\{R,\overline{R}\}$-free. Thus $G'_i$ is chair-free.

Since $\{2K_2,C_4,R,\overline{R}\}$ is self-complementary, and $\overline{G_i}$ is also a split indecomposable graph, when we contract the modules in its independent set $B_i$ and clique $A_i$, we get the graph $\overline{G'_i}$. This graph is also chair-free by the argument above, so $G'_i$ is kite-free.

Since $G_i$ is indecomposable and has at least two vertices, it follows that $G'_i$ is also indecomposable and has at least two vertices. Since $G'_i$ is also $\{2K_2,C_4,\text{chair},\text{kite}\}$-free, from Theorem~\ref{thm: chair-free structure} and Corollary~\ref{cor: kite-free structure} we conclude that $G'_i$ is isomorphic to a headless spider. Since the modules contracted to form $G'_i$ were subsets of $A_i$ or $B_i$, the graph $G_i$ may be obtained from $G'_i$ by substituting edgeless graphs for vertices in $A'_i$ and complete graphs for vertices in $B'_i$, as claimed.
\end{proof}

By Theorem~\ref{thm: Barrus uni} and the comments in Section~\ref{sec: one}, the $\{2K_2,C_4,\text{chair}\}$-free graphs and $\{2K_2,C_4,\text{kite}\}$-free graphs are unigraphs, so they have degree sequence characterizations. One might wonder whether the $\{2K_2,C_4,R, \overline{R}\}$-free graphs can also be recognized from their degree sequences. This is not the case: the graph $R$, which does not belong to the class, shares its degree sequence with the graph $S$ shown in Figure~\ref{fig: other forb subgr}, which does belong to the class. Instead, we might ask which degree sequences are \emph{forcibly $\{2K_2,C_4,R,\overline{R}\}$-free}, that is, having only realizations that are $\{2K_2,C_4,R,\overline{R}\}$-free, and which graphs have these degree sequences.

\begin{figure}
\centering
\begin{pspicture}(9,-0.6)(14,1)
%\psgrid(9,-0.75)(14,1)
% S
\cnode*(9,0){3pt}{AA} \cnode*(9,1){3pt}{AA2} \cnode*(9.67,0.5){3pt}{BB} \cnode*(10.33,1){3pt}{CC} \cnode*(10.33,0){3pt}{DD} \cnode*(11,0.5){3pt}{ZZ}
\ncline{-}{AA}{BB} \ncline{-}{AA2}{BB} \ncline{-}{BB}{CC} \ncline{-}{BB}{DD} \ncline{-}{CC}{DD} \ncline{-}{CC}{ZZ} \ncline{-}{DD}{ZZ}
\uput[d](10,-0.05){\fontsize{7pt}{7pt}$S$}
% S-bar
\cnode*(12,0.5){3pt}{EE} \cnode*(12.67,0.5){3pt}{FF} \cnode*(13.33,1){3pt}{GG} \cnode*(13.33,0){3pt}{HH} \cnode*(14,0){3pt}{II} \cnode*(14,1){3pt}{II2}
\ncline{-}{EE}{FF} \ncline{-}{FF}{GG} \ncline{-}{FF}{HH} \ncline{-}{GG}{HH} \ncline{-}{GG}{II} \ncline{-}{HH}{II} \ncline{-}{GG}{II2} \ncline{-}{HH}{II2}
\uput[d](13,-0.05){\fontsize{7pt}{7pt}$\overline{S}$}
\end{pspicture}
\caption{The graphs $S$ and $\overline{S}$.}
\label{fig: other forb subgr}
\end{figure}
Alternatively, in the interest of finding a graph class with a degree sequence characterization that contains both the $\{2K_2,C_4,\text{chair}\}$-free graphs and $\{2K_2,C_4,\text{kite}\}$-free graphs, we could look at the graphs whose canonical components are each either $\{2K_2,C_4,\text{chair}\}$-free or $\{2K_2,C_4,\text{kite}\}$-free (though which is the case may differ from component to component). As we will see in Sections~\ref{sec: five} and~\ref{sec: six}, properties of the canonical decomposition imply that these graphs are recognizable from their degree sequences.

As it happens, these questions have the same answer, and the graph class involved is just what we need to characterize hereditary unigraphs in the next section.

\begin{thm}\label{thm: forcibly expandable}
The following are equivalent for a graph $G$:
\begin{enumerate}
\item[\textup{(a)}] $d(G)$ is forcibly $\{2K_2,C_4,R,\overline{R}\}$-free;
\item[\textup{(b)}] $G$ is pseudo-split, and every canonical component of $G$ is either chair-free or kite-free.
\item[\textup{(c)}] $G$ is $\{2K_2,C_4,R,\overline{R},S,\overline{S}\}$-free;
\end{enumerate}
\end{thm}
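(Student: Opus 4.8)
The plan is to prove the cycle of implications $(a)\Rightarrow(c)\Rightarrow(b)\Rightarrow(a)$, combining a local $2$-switch argument on the degree sequence with the structural description of $\G$ from Theorem~\ref{thm: R,R-bar} and the unigraph property of chair-free and kite-free pseudo-split graphs.

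For $(a)\Rightarrow(c)$ I would argue the contrapositive. If $G$ contains an induced $2K_2$, $C_4$, $R$, or $\overline{R}$, then $G$ itself is a realization of $d(G)$ that is not $\{2K_2,C_4,R,\overline{R}\}$-free, so $d(G)$ is not forcibly $\{2K_2,C_4,R,\overline{R}\}$-free. The remaining cases are the crux: if $G$ contains an induced $S$, I would exploit that $R$ and $S$ share the degree sequence $(4,3,3,2,1,1)$ and in fact differ by a single $2$-switch. Performing that $2$-switch inside the six vertices carrying the induced copy of $S$ leaves every vertex degree (and every edge incident to the rest of $G$) unchanged, producing a new realization $G'$ of $d(G)$ in which those six vertices now induce $R$. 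Hence $d(G)$ is not forcibly $\{2K_2,C_4,R,\overline{R}\}$-free; the induced-$\overline{S}$ case is identical after complementation, using $\overline{R}$ and $\overline{S}$.

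For $(c)\Rightarrow(b)$ I would first note that a $\{2K_2,C_4,R,\overline{R},S,\overline{S}\}$-free graph is in particular $\{2K_2,C_4\}$-free, hence pseudo-split, and $\{2K_2,C_4,R,\overline{R}\}$-free, so by Theorem~\ref{thm: R,R-bar} it lies in $\G$: every canonical component is $C_5$, $K_1$, or a headless spider with cliques substituted for body vertices and edgeless graphs substituted for feet. By Theorem~\ref{thm: chair-free structure} and Corollary~\ref{cor: kite-free structure}, such a component fails to be chair-free exactly when some foot has been expanded and fails to be kite-free exactly when some body vertex has been expanded; thus a component is neither chair-free nor kite-free precisely when it is \emph{genuinely mixed}, carrying both an expanded foot (an independent set of size $\ge 2$) and an expanded body vertex (a clique of size $\ge 2$). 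The heart of this implication is a structural lemma: every genuinely mixed expanded spider contains an induced $S$ or $\overline{S}$. I would prove it by direct extraction. In a type-(a) spider, choosing the expanded body vertex $b_s$, the expanded foot $a_t$, and the two partners $a_s=f^{-1}(b_s)$ and $b_t=f(a_t)$, a suitable six-vertex induced subgraph turns out isomorphic to $S$ when $s\ne t$ and to $\overline{S}$ when $s=t$; the type-(b) case follows by complementation, since complementation carries genuinely mixed type-(b) spiders to genuinely mixed type-(a) spiders and interchanges $S$ with $\overline{S}$. Because an induced $S$ or $\overline{S}$ in a component is an induced $S$ or $\overline{S}$ in $G$, the $\{S,\overline{S}\}$-freeness in $(c)$ forbids any genuinely mixed component, so every component is chair-free or kite-free, which is $(b)$.

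Finally, for $(b)\Rightarrow(a)$ --- the main obstacle, since it must control \emph{all} realizations of $d(G)$ rather than $G$ alone --- I would pass through the unigraph property. Each canonical component of $G$ is an induced subgraph of the pseudo-split $G$ and is chair-free or kite-free, hence $\{2K_2,C_4,\text{chair}\}$-free or $\{2K_2,C_4,\text{kite}\}$-free, and therefore a unigraph by~\cite{NonMinimalTriples} (and complementation). Invoking Tyshkevich's results relating the canonical decomposition to degree sequences~\cite{Tyshkevich00}, by which a graph all of whose canonical components are unigraphs is itself a unigraph, the graph $G$ is a unigraph. On the other hand, $(b)$ places $G$ in $\G$, so $G$ is $\{2K_2,C_4,R,\overline{R}\}$-free by Theorem~\ref{thm: R,R-bar}. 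Since $G$ is then the unique realization of $d(G)$, the sequence $d(G)$ is forcibly $\{2K_2,C_4,R,\overline{R}\}$-free, which is $(a)$. I expect the delicate points to be verifying the single structural lemma (the six-vertex extractions, including the complementation bookkeeping) and justifying the appeal to Tyshkevich's degree-sequence refinement; the latter is where the degree-sequence direction genuinely leaves the world of a fixed graph.
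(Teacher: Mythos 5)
Your proposal is correct, and it takes a genuinely different route from the paper's on the two hard implications. The paper does not prove a cycle: it shows both (a) and (b) equivalent to (c). Its (a) $\Rightarrow$ (c) is the same degree-preserving edge-replacement argument you give, but its (c) $\Rightarrow$ (a) is a one-liner from Theorem~\ref{thm: Barrus uni}: every forbidden subgraph for hereditary unigraphs induces one of the six graphs, so (c) already makes $G$ a unigraph. Most importantly, the paper's (c) $\Rightarrow$ (b) is a self-contained argument inside a single splitted indecomposable component $(Q,A,B)$: five ``Facts,'' driven by the $P_4$-chain criterion of Theorem~\ref{thm: A4 indecomp}, showing that an induced chair and an induced kite in $Q$ (whose vertex sets may a priori overlap, which costs the paper a careful case analysis) would force an induced $R$, $\overline{R}$, $S$, or $\overline{S}$. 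You replace all of this with the already-proven Theorem~\ref{thm: R,R-bar} (putting $G$ in $\G$) plus one extraction lemma: a genuinely mixed expanded spider induces $S$ or $\overline{S}$. I checked your extractions and they are right: in a type-(a) spider with $s \neq t$, two copies of $b_s$ together with $a_s$, $b_t$, and two copies of $a_t$ induce $S$; with $s = t$, two copies each of $b_s$ and $a_s$ together with a second leg $b_u, a_u$ induce $\overline{S}$; and complementation handles type (b), since it exchanges foot- and body-expansions and exchanges $S$ with $\overline{S}$. This is shorter and more concrete than Facts 1--5 and sidesteps the overlapping-subgraphs analysis entirely, at the cost of routing through Theorem~\ref{thm: R,R-bar}. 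Finally, your (b) $\Rightarrow$ (a) replaces the paper's appeal to \cite{Barrus12} with \cite{NonMinimalTriples} (chair-free or kite-free pseudo-split graphs are unigraphs) applied componentwise, plus Tyshkevich's theorem that a graph is a unigraph if and only if all of its canonical components are (Theorem~\ref{thm: Tysh uni}(a); it is stated later in the paper but is quoted from \cite{Tyshkevich00}, so there is no circularity). Both routes outsource the unigraph step to prior literature; yours trades the sixteen-graph list for the component-wise unigraph theorem.

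Two details to tighten in a write-up. First, in the $s = t$ case your four named vertices collapse to two, so you must say explicitly that the remaining two vertices come from a second leg of the spider, which exists because $|A| = |B| \geq 2$ in Definition~\ref{defn: prime spider}. Second, the step ``(b) places $G$ in $\G$'' needs the nontrivial direction of Theorem~\ref{thm: chair-free structure} and Corollary~\ref{cor: kite-free structure} (a chair-free, resp.\ kite-free, indecomposable split component with more than one vertex is a top-, resp.\ bottom-, expanded spider); alternatively, you can avoid $\G$ here altogether by observing, as the paper does in its (b) $\Rightarrow$ (c), that $R$ and $\overline{R}$ each induce both the chair and the kite and are indecomposable, so (b) directly gives $\{2K_2,C_4,R,\overline{R}\}$-freeness.
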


\begin{proof}
We prove that both (a) and (b) are equivalent to (c).

(a) $\Rightarrow$ (c): Since $d(G)$ is forcibly $\{2K_2,C_4, R, \overline{R}\}$-free, $G$ induces none of these four subgraphs. If $G$ induces $S$, then since this graph has the same degree sequences as $R$, we may delete the edges in the copy of $S$ and replace them with edges from a copy of $R$ so that the modified graph has the same degree sequence as $G$ but induces $R$. This is a contradiction, so $G$ is $S$-free, and similarly $G$ is $\overline{S}$-free as well.

(c) $\Rightarrow$ (a): Note that in Theorem~\ref{thm: Barrus uni}, each of the forbidden subgraphs for hereditary unigraphs induces $2K_2$, $C_4$, $R$, $\overline{R}$, $S$, or $\overline{S}$. Hence if $G$ is $\{2K_2,C_4,R,\overline{R},S,\overline{S}\}$-free, then it is a unigraph, and (a) follows trivially.

(b) $\Rightarrow$ (c): Since $G$ is pseudo-split, it is $\{2K_2,C_4\}$-free. Since each canonical component of $G$ is either chair-free or kite-free, and each of $R$, $\overline{R}$, $S$, and $\overline{S}$ induces both the chair and the kite, none of these can be induced within a canonical component of $G$. Observe now by Theorem~\ref{thm: A4 indecomp} that $R,\overline{R},S,\overline{S}$ are all indecomposable, and hence if $G$ were to contain any of these as an induced subgraph, then it would so within one of its canonical components, a contradiction.

(c) $\Rightarrow$ (b): $G$ is pseudo-split, by assumption. If $G$ has a nonsplit canonical component, then by Corollary~\ref{cor: split,pseudo-split chars} this component is $C_5$, which is both chair-free and kite-free. Let $(Q,A,B)$ be a splitted canonical component of $G$ that induces both the chair and the kite, with vertices as labeled in Figure~\ref{fig: chairkite both}. We do not initially assume that the vertex sets of the two induced subgraphs are disjoint, and for clarity in the figure, we omit the rest of the edges having an endpoint in the clique $B$. We present the rest of the proof through a series of facts.
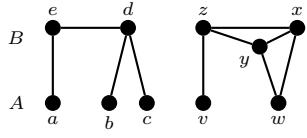
\begin{figure}
\centering
\begin{pspicture}(10,-0.6)(13.25,1)
\uput[l](9.8,0.875){\fontsize{7pt}{7pt}$B$}
\uput[l](9.8,0){\fontsize{7pt}{7pt}$A$}
% chair
\cnode*(10,0){3pt}{a} \cnode*(10,1){3pt}{e} \cnode*(11,1){3pt}{d} \cnode*(10.75,0){3pt}{b} \cnode*(11.25,0){3pt}{c}
\uput[d](10,0){\fontsize{7pt}{7pt}$a$} \uput[u](10,1){\fontsize{7pt}{7pt}$e$} \uput[u](11,1){\fontsize{7pt}{7pt}$d$} \uput[d](10.75,0){\fontsize{7pt}{7pt}$b$} \uput[d](11.25,0){\fontsize{7pt}{7pt}$c$}
\ncline{-}{a}{e} \ncline{-}{e}{d} \ncline{-}{d}{b} \ncline{-}{d}{c}
% kite
\cnode*(12,0){3pt}{v} \cnode*(12,1){3pt}{z} \cnode*(13,0){3pt}{w} \cnode*(12.75,0.75){3pt}{y} \cnode*(13.25,1){3pt}{x}
\uput[d](12,0){\fontsize{7pt}{7pt}$v$} \uput[u](12,1){\fontsize{7pt}{7pt}$z$} \uput[d](13,0){\fontsize{7pt}{7pt}$w$} \uput[dl](12.75,0.75){\fontsize{7pt}{7pt}$y$} \uput[u](13.25,1){\fontsize{7pt}{7pt}$x$}
\ncline{-}{v}{z} \ncline{-}{z}{y} \ncline{-}{z}{x} \ncline{-}{y}{x} \ncline{-}{y}{w} \ncline{-}{x}{w}
\end{pspicture}
\caption{The chair and kite from Theorem~\ref{thm: forcibly expandable}.}
\label{fig: chairkite both}
\end{figure}

\textsc{Fact 1:} If a vertex in $B \setminus \{d,e\}$ has a neighbor among $\{a,b,c\}$, then it is adjacent to to all three of these vertices. Likewise, if a vertex in $A \setminus \{v,w\}$ has a neighbor among $\{x,y,z\}$, then it is adjacent to all three of these vertices.

\emph{Proof.} Let $t$ be a vertex of $B \setminus\{d,e\}$ adjacent to at least one vertex of $\{a,b,c\}$. Let $Q'=Q[\{a,b,c,d,e,t\}]$. One can verify that unless $t$ is adjacent to all of $\{a,b,c\}$, then $Q'$ is isomorphic to one of $R$, $\overline{R}$, $S$, or $\overline{S}$. Note now that $\{2K_2,C_4,R,\overline{R},S,\overline{S}\}$ is a self-complementary set of graphs, and in $(\overline{Q},B,A)$ the vertex sets formerly inducing the chair and the kite now induce the kite and the chair, respectively. This symmetry (which we will refer to as ``complement symmetry'' in the paragraphs that follow) and the arguments above imply that in $Q$ any vertex in $A \setminus \{v,w\}$ is adjacent to either all or none of $\{x,y,z\}$.

\textsc{Fact 2:} The intersections $\{a,b,c\} \cap \{v,w\}$ and $\{d,e\} \cap \{x,y,z\}$ each contain at most one vertex.

\emph{Proof.} Every vertex in $\{a,b,c\} \setminus \{v,w\}$ has a neighbor and a nonneighbor in $\{d,e\}$; by Fact 1, we cannot have $\{d,e\} \subseteq \{x,y,z\}$. By complement symmetry, this implies that $\{v,w\} \nsubseteq \{a,b,c\}$.

\textsc{Fact 3:} The sets $\{a,b,c,d,e\}$ and $\{v,w,x,y,z\}$ are disjoint.

\emph{Proof.} Suppose first that $\{d,e\} \cap \{x,y,z\} = \{u\}$ for some $u$. If $u$ has both a neighbor $r$ and a nonneighbor $s$ in $\{a,b,c\}\setminus\{v,w\}$, then by Fact 1, $r$ is adjacent to all of $\{x,y,z\}$, and $s$ is adjacent to none of these vertices. However, this is a contradiction, since Fact 1 implies that any vertex in $\{x,y,z\}\setminus\{u\}$ must be adjacent to both or neither of $r$ and $s$. Thus $u$ cannot have both a neighbor and a nonneighbor in $\{a,b,c\} \setminus\{v,w\}$, and it follows that $a \in \{v,w\}$. Hence $\{a,b,c\} \cap \{v,w\}$ is nonempty, and by complement symmetry we conclude that $z \in \{d,e\}$.

If $a = v$, then since $v$ is adjacent to $z$ we have $z = e$. Since $b$ and $c$ are not adjacent to $e$, Fact 1 implies that $b$ and $c$ are not adjacent to $x$ or $y$. If $dw \notin E(Q)$, then $Q[\{b,c,d,w,x,y\}] \cong S$, and if $dw \in E(Q)$, then $Q[\{b,d,e,v,w,x\}] \cong R$, both contradictions. Hence $a \neq v$.

Thus $a = w$, and since $w$ is not adjacent to $z$, we have $z = d$. Since $b$ and $c$ are adjacent to $d$, Fact 1 implies that $b$ and $c$ are adjacent to both $x$ and $y$. If $ve \notin E(Q)$, then $Q[\{b,d,e,v,w,x\}] \cong \overline{R}$, and if $ve \in E(Q)$, then $Q[\{b,c,e,v,x,y\}] \cong \overline{S}$. These contradictions imply that $\{d,e\} \cap \{x,y,z\} = \emptyset$. By complement symmetry we find that $\{a,b,c\} \cap \{v,w\}=\emptyset$, so in fact $\{a,b,c,d,e\}$ and $\{v,w,x,y,z\}$ are disjoint.

\textsc{Fact 4:} For any induced 4-vertex path $P$ having a nonempty intersection with an induced chair $C$ in $Q$, there is an induced chair in $Q$ containing all vertices in $V(P)\setminus V(C)$.

\emph{Proof.} Let $p,q,r,s$ be the vertices of the path $P$, in order, so $p,s \in A$ and $q,r \in B$. For convenience, without loss of generality we will assume that $C$ is the chair with vertex set $\{a,b,c,d,e\}$. We first show that if $p \in \{a,b,c\}$, then we may assume that $q \in \{d,e\}$. If not, then by Fact 1, $q$ is adjacent to all of $\{a,b,c\}$, so $s \notin \{a,b,c\}$. Now if $r$ has two nonneighbors in $\{a,b,c\}$, then $Q$ contains an induced chair with vertices $q$, $r$, $s$, and the two nonneighbors of $r$. Otherwise, $r$ has two neighbors and one nonneighbor (namely, $p$) in $\{a,b,c\}$, so Fact 1 implies $r=d$ and hence $p=a$. Since $Q[\{a,b,d,e,q,s\}] \ncong \overline{R}$, we see that $s$ is adjacent to $e$. Then $Q[\{b,c,e,q,s\}]$ is a chair, as claimed in the fact. By a symmetric argument, we may also assume that if $s \in \{a,b,c\}$, then $r \in \{d,e\}$.

If $\{q,r\}=\{d,e\}$, then $Q$ induces a chair on the vertices $d$, $e$, $p$, $s$ and any vertex in $\{a,b,c\}\setminus\{p,s\}$. By symmetry we may assume that $q \notin \{d,e\}$. By the previous paragraph, $p \notin \{a,b,c\}$, and since $\{p,q,r,s\} \cap \{a,b,c,d,e\}$ is nonempty, $r \in \{d,e\}$. If either $q$ or $r$ has a neighbor in $\{a,b,c\}\setminus\{s\}$ that the other doesn't, then that vertex induces a chair with $\{p,q,r,s\}$. By Fact 1, $q$ is adjacent to all or none of $\{a,b,c\}$; since $r$ has both a neighbor and a nonneighbor in $\{a,b,c\}$, the only way to avoid a chair is to have $\{a,b,c\}\setminus\{s\}=\{b,c\}$, so $s=a$ and hence $r=e$. By Fact 1, $q$ is not adjacent to $b$ or $c$. Since $G[\{a,b,d,e,p,q\}] \ncong R$, $p$ is not adjacent to $d$. We then have a chair with vertex set $\{b,c,d,p,q\}$, as claimed.

\textsc{Fact 5:} $Q$ is chair-free or kite-free.

\emph{Proof.} Since $Q$ is a canonically indecomposable split graph, and no split graph can contain an induced $2K_2$ or $C_4$, Theorem~\ref{thm: A4 indecomp} implies that for any pair $t,u$ of vertices in $Q$, there is a sequence $H_1,\dots,H_\ell$ of induced 4-vertex paths such that $t \in V(H_1)$, $u \in V(H_\ell)$, and $V(H_i) \cap V(H_{i+1})$ is nonempty for $i=1,\dots,\ell-1$. Suppose that $t \in \{a,b,c,d,e\}$ and $u \in \{v,w,x,y,z\}$. Inductively applying Fact 4 to each of the paths $H_i$, we conclude that $u$ belongs to an induced chair. However this contradicts Fact 3, which implies that no induced chair and kite can have intersecting vertex sets.
\end{proof}

\section{Hereditary unigraphs} \label{sec: four}
Our results on expansions in spiders now lead us to a structural characterization of hereditary unigraphs.

In~\cite{Tyshkevich00}, Tyshkevich used the canonical decomposition to describe the structure of arbitrary unigraphs. We recall here the list of non-split indecomposable unigraphs. Let $U_s$ denote the graph formed by merging together one vertex from each connected component of $C_4 + sK_3$; note that $U_s$ is the only graph with degree sequence $(s+2,2^{2s+3})$.

\begin{thm}[\cite{Tyshkevich00}] \label{thm: Tysh uni} \mbox{}
\begin{enumerate}
\item[\textup{(a)}] A graph is a unigraph if and only if each of its canonical components is.
\item[\textup{(b)}] An indecomposable non-split graph $G$ is a unigraph if and only if $G$ or $\overline{G}$ is one of the following: $C_5$, $rK_2$ for $r \geq 2$, $K_{1,r}+sK_2$ for $r \geq 2$ and $s \geq 1$, or $U_s$ for $s \geq 1$.
\end{enumerate}
\end{thm}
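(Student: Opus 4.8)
The plan is to handle the two parts in turn: part (a) follows once one establishes that the canonical decomposition interacts rigidly with degree sequences, while part (b) needs a direct verification in one direction and a structural classification in the other. Throughout I will use the elementary fact that $G$ is a unigraph if and only if $\overline{G}$ is, since $H \mapsto \overline{H}$ is an isomorphism-preserving bijection between the realizations of $d(G)$ and those of $d(\overline{G})$. For part (a), write the canonical decomposition $G = (G_k,A_k,B_k) \circ \dots \circ (G_1,A_1,B_1) \circ G_0$. The crux is a compatibility lemma: $d(G)$ determines the ordered list of component degree sequences, and \emph{every} realization $G'$ of $d(G)$ decomposes as $G' = (G'_k,A_k,B_k) \circ \dots \circ (G'_1,A_1,B_1) \circ G'_0$ with each $G'_i$ a realization of $d(G_i)$ carrying the same split $(A_i,B_i)$. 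Granting the lemma, part (a) is a counting argument: by Theorem~\ref{thm: canonical decomp is unique} two realizations of $d(G)$ are isomorphic precisely when their corresponding components are, so $d(G)$ has a unique realization if and only if each component degree sequence does, i.e., if and only if every canonical component is a unigraph.

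Proving the compatibility lemma is the main obstacle in part (a). In the composition, each vertex of $B_i$ is adjacent to all of $\bigcup_{j<i} V(G_j)$ and each vertex of $A_i$ to none of it, so the sorted $d(G)$ displays a rigid block (``staircase'') pattern. I would show that in an arbitrary realization $G'$ these dominations are forced: a vertex of sufficiently large degree must be adjacent to every lower-degree vertex lying below a block boundary, and a vertex of small degree must avoid them. The boundaries themselves are recoverable from $d(G)$ alone---they are exactly the indices at which the Erd\H{o}s--Gallai inequality holds with equality, the phenomenon studied in Section~\ref{sec: five}---and Theorem~\ref{thm: A4 indecomp} certifies that the piece induced on each block is indecomposable. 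Together these reconstruct the canonical decomposition of $G'$ and match it to that of $G$, yielding the lemma.

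For the reverse implication of part (b), note that the list is phrased as ``$G$ or $\overline{G}$'' and is therefore closed under complementation, so by the complement-closure fact it suffices to verify that $C_5$, $rK_2$, $K_{1,r}+sK_2$, and $U_s$ are unigraphs. For $rK_2$ (the $1$-regular graph) and $C_5$ (the unique $2$-regular graph on five vertices) this is immediate, and for $K_{1,r}+sK_2$ the unique vertex of degree $r \geq 2$ must absorb $r$ of the degree-$1$ vertices while the rest pair off, forcing the graph. The graph $U_s$ is the only delicate case: I would delete its unique vertex of maximum degree, observe that the remainder has maximum degree $2$ and hence is a disjoint union of paths and cycles, and then use the adjacencies of the deleted vertex to force these pieces to reassemble as $C_4 + sK_3$ glued at that vertex.

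The forward implication of part (b) is the heart of the theorem and the principal obstacle overall. Let $G$ be an indecomposable non-split unigraph. By Theorem~\ref{thm: split forb sub} it induces $2K_2$, $C_4$, or $C_5$; since $\overline{2K_2} = C_4$, after possibly replacing $G$ by $\overline{G}$ (permissible by complement closure, and preserving indecomposability via Observation~\ref{obs: complement decompositions}) I may assume $G$ induces $C_5$ or $2K_2$. The strategy is to exploit $2$-switch rigidity: a unigraph admits no $2$-switch producing a nonisomorphic graph, which sharply constrains its adjacencies. I would first show that an indecomposable unigraph inducing $C_5$ must equal $C_5$, and then, in the $2K_2$ case, combine $2$-switch rigidity with indecomposability (again via Theorem~\ref{thm: A4 indecomp}, to rule out configurations that would split off a component) to force the degree sequence to be nearly regular and to localize the structure, matching it against $rK_2$, $K_{1,r}+sK_2$, or $U_s$. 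Controlling the proliferation of cases in this last step, while ensuring the indecomposability hypothesis continually eliminates reducible configurations, is where the real effort lies.
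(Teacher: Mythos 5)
A preliminary remark on the comparison itself: this paper never proves Theorem~\ref{thm: Tysh uni} --- it is quoted from Tyshkevich~\cite{Tyshkevich00} --- so your proposal can only be measured against Tyshkevich's original argument and against the machinery this paper develops in Section~\ref{sec: five}. Your plan for part (a) is sound and is essentially the approach that works: the ``compatibility lemma'' you postulate is true, and its proof is exactly the content of Lemma~\ref{lem: EG equality} and Theorem~\ref{thm: deg seqs and canon comps} --- the Erd\H{o}s--Gallai equality indices are properties of $d$ alone, so every realization of $d$ decomposes at the same block boundaries into components whose (splitted) degree sequences are determined by $d$; combined with the uniqueness statement in Theorem~\ref{thm: canonical decomp is unique}, uniqueness of the realization of $d$ factors into uniqueness for each component. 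Two points you should make explicit for the counting step to close: every realization of a split component's degree sequence is itself split (Theorem~\ref{thm: deg seqs for split, pseudo}(b)), and for a nontrivial indecomposable split component graph-isomorphism coincides with splitted-isomorphism because the split partition is unique (Lemma~\ref{lem: unique split ptn}). The reverse implication of (b) is also fine; your $U_s$ sketch completes easily, since deleting the vertex of largest degree leaves a graph with one vertex of degree $2$ and all others of degree $1$, which must be $P_3+sK_2$, and reattaching the deleted vertex forces $U_s$.

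The genuine gap is the forward implication of (b), which you correctly call the heart of the theorem but for which you supply a strategy rather than a proof. ``Combine $2$-switch rigidity with indecomposability \dots to force the degree sequence to be nearly regular and to localize the structure, matching it against $rK_2$, $K_{1,r}+sK_2$, or $U_s$'' names the tools but none of the steps: why an indecomposable non-split unigraph (up to complementation) has at most two distinct degrees with the exceptional degree achieved exactly once; why in the $2K_2$ case the adjacency structure collapses to a perfect matching, a star plus a matching, or the $U_s$ pattern; and why no other configuration survives both rigidity and indecomposability. (Note also that ``nearly regular'' is not even the right invariant: $K_{1,r}+sK_2$ and $U_s$ contain a vertex whose degree is arbitrarily far from all the others.) Even your warm-up claim, that an indecomposable unigraph inducing $C_5$ must equal $C_5$, is asserted without argument. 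This classification is precisely what occupied Tyshkevich and Chernyak's Unigraphs I--III and a large part of~\cite{Tyshkevich00}; it is a lengthy induction and case analysis, and the $2$-switch rigidity principle alone does not shortcut it. As written, the forward direction of (b) is a restatement of the goal, not a proof.
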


Note that both the 4-pan and the co-4-pan have degree sequence $(3,2,2,2,1)$, and both are induced in $U_s$ for all $s \geq 1$.

The forbidden subgraph characterization of hereditary unigraphs allows us to adapt statement (a) of Theorem~\ref{thm: Tysh uni} to hereditary unigraphs. Let $\F$ denote the set of forbidden subgraphs in Theorem~\ref{thm: Barrus uni}.

\begin{lem} \label{lem: her uni iff components are}
A graph is a hereditary unigraph if and only if each of its canonical components is.
\end{lem}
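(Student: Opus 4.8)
The plan is to work entirely through the forbidden-subgraph characterization of Theorem~\ref{thm: Barrus uni}: a graph is a hereditary unigraph precisely when it is $\F$-free. Thus it suffices to prove that $G$ is $\F$-free if and only if each of its canonical components is $\F$-free. The forward implication is immediate, since each canonical component is an induced subgraph of $G$ and the property of being $\F$-free is inherited by induced subgraphs. All the work lies in the converse.

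The key structural fact I would establish first is an embedding principle: \emph{every indecomposable induced subgraph of $G$ lies within a single canonical component of $G$}. To prove it, write $G=(G_k,A_k,B_k)\circ\dots\circ(G_1,A_1,B_1)\circ G_0$ and let $H$ be an induced subgraph whose vertices meet at least two components. Let $m$ be the largest index with $V(H)\cap V(G_m)\neq\emptyset$, and set $W=V(H)\cap V(G_m)$ and $W'=V(H)\setminus W$, so that $W'$ is nonempty and contained in $\bigcup_{j<m}V(G_j)$. By the adjacency rules of the composition, every vertex of $W\cap B_m$ is adjacent to all of $W'$ and every vertex of $W\cap A_m$ is adjacent to none of $W'$; moreover $W\cap A_m$ is independent and $W\cap B_m$ is a clique in $H$. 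Hence $(H[W],W\cap A_m,W\cap B_m)$ is a splitted graph and $H=(H[W],W\cap A_m,W\cap B_m)\circ H[W']$ with both factors nonempty, so $H$ is decomposable. Contrapositively, an indecomposable induced subgraph can meet only one canonical component.

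With this in hand the converse is short. First I would verify, using Theorem~\ref{thm: A4 indecomp}, that every member of $\F$ is canonically indecomposable. Because $\F$ is closed under complementation and, by Observation~\ref{obs: complement decompositions}, a graph is indecomposable exactly when its complement is, it suffices to check one graph from each complementary pair; the pairs $R,\overline{R}$ and $S,\overline{S}$ were already handled, leaving $P_5$, $K_2+K_3$, the $4$-pan, $2P_3$, $K_2+P_4$, and $K_2+C_4$. For each, the criterion of Theorem~\ref{thm: A4 indecomp} is met by a short chain of induced copies of $2K_2$, $C_4$, or $P_4$ covering all vertices, with consecutive copies sharing a vertex --- for instance the two induced copies of $P_4$ on $\{1,2,3,4\}$ and $\{2,3,4,5\}$ in the path $P_5$ with vertices $1,\dots,5$ in order, or an induced $C_4$ on the cyclic part of $K_2+C_4$ together with an induced $2K_2$ using the $K_2$ and one edge of that cycle. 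Now suppose every canonical component of $G$ is $\F$-free but $G$ induces some $F\in\F$. Since $F$ is indecomposable, the embedding principle forces this copy to lie inside a single canonical component $G_i$, whence $G_i$ induces $F$, contradicting that $G_i$ is $\F$-free. Therefore $G$ is $\F$-free and hence a hereditary unigraph.

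The routine but genuinely necessary obstacle is confirming the indecomposability of all sixteen forbidden subgraphs, which the complement symmetry of $\F$ cuts down to six explicit checks. The conceptual heart of the argument is the embedding principle --- exactly the mechanism already invoked for $R,\overline{R},S,\overline{S}$ in the proof of Theorem~\ref{thm: forcibly expandable} --- which allows a global forbidden-subgraph condition to be localized to the canonical components and thereby transfers the known Tyshkevich-style componentwise criterion (Theorem~\ref{thm: Tysh uni}(a)) to the hereditary setting.
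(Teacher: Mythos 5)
Your proof is correct and takes essentially the same approach as the paper: both arguments reduce, via Theorem~\ref{thm: Barrus uni}, to showing that $\F$-freeness of $G$ is equivalent to $\F$-freeness of each canonical component, using Theorem~\ref{thm: A4 indecomp} to conclude that every member of $\F$ is canonically indecomposable and therefore any induced copy must lie inside a single component. The only difference is one of exposition --- you explicitly prove the embedding principle and outline the indecomposability checks, both of which the paper's two-paragraph proof leaves implicit.
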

\begin{proof}
If $G$ has a canonical component that is not a hereditary unigraph, then that component (and hence $G$) contains an induced subgraph from $\F$; thus $G$ is not a hereditary unigraph.

Conversely, if $G$ is not a hereditary unigraph, then it contains some element of $\F$ as an induced subgraph $H$. Applying Theorem~\ref{thm: A4 indecomp}, we see that each element of $\F$ is indecomposable. Hence $H$ must be induced in a single canonical component of $G$, preventing that component from being a hereditary unigraph.
\end{proof}

Thus to characterize the structure of hereditary unigraphs it suffices to characterize the canonical components that are $\F$-free. When a canonical component is non-split, Theorem~\ref{thm: Tysh uni} yields an immediate result.

\begin{cor} \label{cor: indecomp non-split her uni}
An indecomposable non-split graph $G$ is a hereditary unigraph if and only if $G$ or $\overline{G}$ is one of the following: $C_5$, $rK_2$ for $r \geq 2$, or $K_{1,r}+sK_2$ for $r \geq 2$ and $s \geq 1$.
\end{cor}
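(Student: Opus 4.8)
The plan is to read off the result by intersecting Tyshkevich's classification of indecomposable non-split unigraphs (Theorem~\ref{thm: Tysh uni}(b)) with the forbidden-subgraph characterization of hereditary unigraphs (Theorem~\ref{thm: Barrus uni}); the corollary simply records that passing from unigraphs to hereditary unigraphs removes exactly the graphs $U_s$ from Tyshkevich's list. At the outset I would note that $\F$ is closed under complementation, so a graph is a hereditary unigraph if and only if its complement is; this ``complement symmetry'' lets me treat each graph together with its complement throughout.

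For the forward direction, suppose $G$ is an indecomposable non-split hereditary unigraph. Then $G$ is in particular a unigraph, so Theorem~\ref{thm: Tysh uni}(b) forces $G$ or $\overline{G}$ to be one of $C_5$, $rK_2$, $K_{1,r}+sK_2$, or $U_s$. The only thing to rule out is $U_s$. Here I would invoke the remark preceding the corollary: $U_s$ contains the 4-pan (and the co-4-pan) as an induced subgraph, and both of these belong to $\F$, so $U_s$ is not $\F$-free; by complement symmetry $\overline{U_s}$ is not $\F$-free either. Since the hereditary unigraph $G$ is $\F$-free, $G$ is neither $U_s$ nor $\overline{U_s}$, and the three permitted graphs remain.

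For the converse it suffices, again by complement symmetry, to show that each of $C_5$, $rK_2$, and $K_{1,r}+sK_2$ is $\F$-free; Theorem~\ref{thm: Barrus uni} then makes it a hereditary unigraph. I would dispatch $C_5$ by a size count: every member of $\F$ has at least five vertices, while $C_5$ is not isomorphic to any of the five-vertex members (none of which is $2$-regular) and is too small to contain any member as a proper induced subgraph. For $rK_2$ I would note that its maximum degree is $1$, whereas every member of $\F$ has a vertex of degree at least $2$.

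The bulk of the work---and the only step I expect to require care---is verifying that $K_{1,r}+sK_2$ is $\F$-free for all $r\ge 2$, $s\ge 1$. Rather than checking sixteen graphs individually, I would argue structurally: $K_{1,r}+sK_2$ is a forest whose only vertex of degree exceeding $1$ is the center of the star, so every induced subgraph is a forest having at most one vertex of degree at least $2$. Thirteen of the sixteen members of $\F$ contain a cycle, and hence cannot be induced in a forest; the remaining three, namely $P_5$, $2P_3$, and $K_2+P_4$, are forests but each has at least two vertices of degree at least $2$, so none of them can be induced in $K_{1,r}+sK_2$ either. This settles all sixteen cases at once and completes the proof.
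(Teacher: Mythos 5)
Your proposal is correct and takes essentially the same approach as the paper: the paper obtains this corollary immediately by intersecting Tyshkevich's classification (Theorem~\ref{thm: Tysh uni}(b)) with $\F$-freeness, using the remark preceding the corollary that the 4-pan and co-4-pan (both members of $\F$) are induced in every $U_s$, which is exactly your argument. The only difference is that you spell out the routine verification that $C_5$, $rK_2$, and $K_{1,r}+sK_2$ are $\F$-free (via cycles and degree counts), a step the paper treats as immediate.
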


Of the forbidden subgraphs listed in Theorem~\ref{thm: Barrus uni}, only $R$, $\overline{R}$, $S$, and $\overline{S}$ are split. Theorems~\ref{thm: Barrus uni}, \ref{thm: split forb sub}, \ref{thm: chair-free structure}, and~\ref{thm: forcibly expandable} and Corollary~\ref{cor: kite-free structure} then yield the following:

\begin{cor}\label{cor: indecomp split her uni}
The following are equivalent for an indecomposable split graph $G$:
\begin{enumerate}
\item[\textup{(a)}] $G$ is a hereditary unigraph;
\item[\textup{(b)}] $G$ is $\{R,\overline{R},S,\overline{S}\}$-free;
\item[\textup{(c)}] $G$ is chair-free or kite-free;
\item[\textup{(d)}] $G$ is isomorphic to $K_1$ or is a top-expanded spider or a bottom-expanded spider.
\end{enumerate}
\end{cor}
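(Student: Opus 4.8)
The plan is to prove the cycle of implications (a) $\Rightarrow$ (b) $\Rightarrow$ (c) $\Rightarrow$ (d) $\Rightarrow$ (a), leaning on the structural results already in hand rather than arguing from scratch; the essential content has in fact been carried out in Theorem~\ref{thm: forcibly expandable}, so the corollary is largely a matter of specializing those theorems to an indecomposable split graph. First, (a) $\Rightarrow$ (b) is immediate: the graphs $R$, $\overline{R}$, $S$, $\overline{S}$ all appear in the forbidden list $\F$ of Theorem~\ref{thm: Barrus uni}, so any hereditary unigraph is in particular $\{R,\overline{R},S,\overline{S}\}$-free.

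For (b) $\Rightarrow$ (c) I would first note that because $G$ is split it is automatically $\{2K_2,C_4\}$-free by Theorem~\ref{thm: split forb sub}; combined with the hypothesis that $G$ is $\{R,\overline{R},S,\overline{S}\}$-free, this makes $G$ satisfy condition (c) of Theorem~\ref{thm: forcibly expandable}. That theorem then yields its condition (b): every canonical component of $G$ is chair-free or kite-free. Since $G$ is indecomposable, its unique canonical component is $G$ itself, so $G$ is chair-free or kite-free.

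Next, (c) $\Rightarrow$ (d) divides into two symmetric cases. If $G$ is chair-free, then since $G$ is split it is in fact $\{2K_2,C_4,\text{chair}\}$-free, and applying Theorem~\ref{thm: chair-free structure} to the canonical decomposition of $G$—which is trivial, namely $k=0$ and $G=G_0$, because $G$ is indecomposable—shows that the lone split component $G$ is either $K_1$ or a top-expanded spider. If instead $G$ is kite-free, the symmetric application of Corollary~\ref{cor: kite-free structure} shows that $G$ is $K_1$ or a bottom-expanded spider. For (d) $\Rightarrow$ (a), the graph $K_1$ is trivially a hereditary unigraph; a top-expanded spider is split and chair-free (the latter explicitly noted in the proof of Theorem~\ref{thm: chair-free structure}), hence $\{2K_2,C_4,\text{chair}\}$-free, and since that class is hereditary and consists only of unigraphs it is contained in the hereditary unigraphs, so the given top-expanded spider is a hereditary unigraph; the bottom-expanded spider case follows in the same way from the $\{2K_2,C_4,\text{kite}\}$-free class.

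I expect no genuine obstacle here, since the difficult work lives in Theorem~\ref{thm: forcibly expandable}. The two points requiring care are bookkeeping rather than mathematics: in (c) $\Rightarrow$ (d) one must use that indecomposability collapses the canonical decomposition to $G=G_0$, so that the structural theorems describe $G$ itself and not merely its proper components; and in (d) $\Rightarrow$ (a) one must invoke the earlier observation that a hereditary class all of whose members are unigraphs is contained in the class of hereditary unigraphs.
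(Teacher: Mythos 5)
Your proposal is correct and takes essentially the same route as the paper: the paper's proof is precisely the observation that only $R$, $\overline{R}$, $S$, $\overline{S}$ among the forbidden subgraphs are split, followed by a citation of Theorems~\ref{thm: Barrus uni}, \ref{thm: split forb sub}, \ref{thm: chair-free structure}, and~\ref{thm: forcibly expandable} and Corollary~\ref{cor: kite-free structure}, which are exactly the results you assemble (your only cosmetic deviation is closing the cycle (d)~$\Rightarrow$~(a) via maximality of the hereditary unigraph class rather than re-invoking Theorem~\ref{thm: Barrus uni}, and both work). No gaps.
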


Pulling these results together yields a complete structural characterization of hereditary unigraphs:

\begin{thm}\label{thm: main structural}
A graph $G$ with canonical decomposition $G=(G_k,A_k,B_k)\circ \dots \circ (G_1,A_1,B_1) \circ G_0$ is a hereditary unigraph if and only if each of its split canonical components with more than one vertex is a top-expanded spider or a bottom-expanded spider, and $G_0$ or $\overline{G_0}$ is either split or isomorphic to $C_5$, $rK_2$ for $r \geq 2$, or $K_{1,r}+sK_2$ for $r \geq 2$ and $s \geq 1$.
\end{thm}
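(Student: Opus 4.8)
The plan is to assemble the statement directly from the component-wise characterization in Lemma~\ref{lem: her uni iff components are} together with Corollaries~\ref{cor: indecomp non-split her uni} and~\ref{cor: indecomp split her uni}. By Lemma~\ref{lem: her uni iff components are}, $G$ is a hereditary unigraph if and only if every canonical component is one. In the canonical decomposition the components $G_1,\dots,G_k$ are all indecomposable split graphs, while $G_0$ is indecomposable but need not be split; so it suffices to characterize when each such component is a hereditary unigraph and then translate the resulting conditions into the language of the theorem.

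First I would handle the split components. Each $G_i$ with $i \geq 1$, as well as $G_0$ in the case that $G_0$ is split, is an indecomposable split graph, so Corollary~\ref{cor: indecomp split her uni} applies: such a component is a hereditary unigraph exactly when it is $K_1$ or a top-expanded or bottom-expanded spider. A single-vertex component is trivially a hereditary unigraph and imposes no constraint, so the content is precisely that every split canonical component with more than one vertex must be a top-expanded or bottom-expanded spider, which is the first clause of the theorem.

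Next I would treat $G_0$ when it is not split. Here Corollary~\ref{cor: indecomp non-split her uni} says $G_0$ is a hereditary unigraph if and only if $G_0$ or $\overline{G_0}$ is one of $C_5$, $rK_2$ with $r \geq 2$, or $K_{1,r}+sK_2$ with $r \geq 2$ and $s \geq 1$. To fold the split and non-split cases of $G_0$ into the single phrase appearing in the theorem, I would use the fact that the class of split graphs is closed under complementation, so that ``$\overline{G_0}$ is split'' is equivalent to ``$G_0$ is split''; thus the disjunction ``$G_0$ or $\overline{G_0}$ is either split or isomorphic to $C_5$, $rK_2$, or $K_{1,r}+sK_2$'' is satisfied exactly when either $G_0$ is split (covered, together with the spider requirement, by the first clause) or $G_0$ is non-split and meets the Corollary~\ref{cor: indecomp non-split her uni} criterion.

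There is no serious obstacle here; the only point requiring care is the bookkeeping around $G_0$. Specifically, when $G_0$ happens to be split it is governed by \emph{both} clauses of the theorem---its spider structure by the ``every split canonical component'' clause and its split-ness (trivially) by the ``$G_0$ or $\overline{G_0}$'' clause---so I would verify that these two clauses are jointly equivalent to ``$G_0$ is a hereditary unigraph'' in that case, ensuring no condition is lost or double-counted. Conjoining the per-component equivalences across all components then yields the stated characterization in both directions.
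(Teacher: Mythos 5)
Your proposal is correct and takes essentially the same route as the paper, which states Theorem~\ref{thm: main structural} as an immediate consequence of ``pulling together'' Lemma~\ref{lem: her uni iff components are} with Corollaries~\ref{cor: indecomp non-split her uni} and~\ref{cor: indecomp split her uni}, exactly the three ingredients you use. Your explicit bookkeeping for the case where $G_0$ is split (noting that split graphs are closed under complementation, so the second clause is then vacuous and $G_0$ is governed by the spider clause) is precisely the detail the paper leaves implicit.
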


We return briefly to the example families of hereditary unigraphs listed in~\eqref{eq: tower}. Threshold graphs are precisely the $\{2K_2,C_4,P_4\}$-free graphs~\cite{ChvatalHammer73}. By Theorem~\ref{thm: A4 indecomp}, these graphs (including the complete and edgeless graphs) have canonical decompositions where the indecomposable components each contain a single vertex. Though matrogenic graphs and matroidal graphs were defined separately, in terms of matroids in slightly different contexts, these classes' forbidden subgraph characterizations~\cite{FoldesHammer78,Peled77} show that matroidal graphs are precisely the $C_5$-free matrogenic graphs. Based on the work of F\"{o}ldes and Hammer and Peled, as well as results of Marchioro et al.~\cite{MarchioroEtAl84} and Tyshkevich~\cite{Tyshkevich84}, we recall the following characterization of the structure of matrogenic graphs. A \emph{net} is a headless prime spider satisfying condition iii(a) in Definition~\ref{defn: prime spider}; its complement is a headless prime spider satisfying condition iii(b).

\begin{thm}
A graph $G$ with canonical decomposition $G=(G_k,A_k,B_k)\circ \dots \circ (G_1,A_1,B_1) \circ G_0$ is matrogenic if each split canonical component with more than one vertex is a net or the complement of a net, and $G_0$ is split or isomorphic to one of $C_5$, $rK_2$, or $\overline{rK_2}$ for $r \geq 2$.
\end{thm}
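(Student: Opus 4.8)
The plan is to mirror the development already carried out in this section for hereditary unigraphs, replacing the forbidden family $\F$ by the ten five-vertex graphs that characterize matrogenic graphs~\cite{FoldesHammer78}. First I would record that the matrogenic graphs form a hereditary family contained in the unigraphs, so by \eqref{eq: tower} they lie inside the hereditary unigraphs; consequently every matrogenic graph already satisfies the structural description of Theorem~\ref{thm: main structural}, and the task reduces to pinning down the extra restriction that matrogenicity places on the individual canonical components. Since only $G_0$ can fail to be split, this amounts to classifying the indecomposable matrogenic components of each type.

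The key preliminary step is a component lemma exactly parallel to Lemma~\ref{lem: her uni iff components are}: a graph is matrogenic if and only if each of its canonical components is. As in the proof of that lemma, the only real content is that each forbidden subgraph is canonically indecomposable. I would verify this for all ten matrogenic obstructions using Theorem~\ref{thm: A4 indecomp}, exhibiting for each one a covering sequence of induced $2K_2$, $C_4$, or $P_4$ subgraphs joining an arbitrary pair of vertices. Once indecomposability is established, the contrapositive argument of Lemma~\ref{lem: her uni iff components are} applies verbatim: any forbidden subgraph, being indecomposable, is induced inside a single canonical component, so $G$ is matrogenic exactly when no component is.

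It then remains to identify which indecomposable hereditary unigraphs are matrogenic, using the classifications in Corollaries~\ref{cor: indecomp non-split her uni} and~\ref{cor: indecomp split her uni}. For the non-split case I would run through the list $C_5$, $rK_2$, $K_{1,r}+sK_2$ and complements: the graphs $C_5$, $rK_2$, and $\overline{rK_2}$ survive, while $K_{1,r}+sK_2$ (and, by complement symmetry, its complement) is eliminated because for $r\geq 2$ and $s\geq 1$ the center, two of its leaves, and one independent edge induce $K_{1,2}+K_2$, itself one of the ten forbidden graphs. For the split case I would begin from a top- or bottom-expanded spider and show that any genuine expansion---a body vertex blown up to a clique on at least two vertices, or a foot blown up to two nonadjacent vertices---produces a five-vertex forbidden induced subgraph, whereas the unexpanded prime spiders of types iii(a) and iii(b), namely the nets and complements of nets, avoid all ten obstructions (the single-vertex component $K_1$ being trivially matrogenic). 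Assembling these classifications through the component lemma yields the stated characterization in both directions.

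I expect the main obstacle to be the forbidden-subgraph bookkeeping in the split case: for each admissible expansion one must exhibit an explicit induced copy of one of the ten obstructions, and one must simultaneously confirm that genuine nets and co-nets of arbitrary size contain none of them. This is the step where the explicit list of matrogenic forbidden subgraphs, rather than merely its cardinality, is required, and where the case analysis is most delicate; by contrast the non-split elimination and the indecomposability verifications are comparatively routine. A lighter alternative would bypass much of this by directly translating the known structural descriptions of matrogenic graphs due to Marchioro et al.~\cite{MarchioroEtAl84} and Tyshkevich~\cite{Tyshkevich84} into the canonical-decomposition language, but the forbidden-subgraph route has the advantage of fitting seamlessly into the framework already built here.
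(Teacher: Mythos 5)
The paper does not prove this statement at all: it is \emph{recalled} as known, the preceding sentence attributing the structure of matrogenic graphs to F\"{o}ldes and Hammer, Peled, and the results of Marchioro et al.~\cite{MarchioroEtAl84} and Tyshkevich~\cite{Tyshkevich84} --- precisely the ``lighter alternative'' you describe in your closing paragraph. Your proposal is therefore a genuinely different route, and it is sound: it rederives the statement inside the paper's own framework by (i) proving the matrogenic analogue of Lemma~\ref{lem: her uni iff components are}, which needs only that the ten F\"{o}ldes--Hammer obstructions are canonically indecomposable (true, and checkable with Theorem~\ref{thm: A4 indecomp} exactly as you say, since each obstruction is covered by induced copies of $2K_2$, $C_4$, and $P_4$), and (ii) filtering the component classifications of Corollaries~\ref{cor: indecomp non-split her uni} and~\ref{cor: indecomp split her uni}, using that the matrogenic graphs form a hereditary subclass of the unigraphs and hence sit inside the hereditary unigraphs. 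Your key factual claims check out: $K_{1,2}+K_2$ is indeed one of the ten obstructions (it is a five-vertex non-matrogenic graph, and since every obstruction in the list has five vertices, any five-vertex non-matrogenic graph must itself appear in the list), and the center, two leaves, and one matching edge of $K_{1,r}+sK_2$ induce it whenever $r\geq 2$ and $s\geq 1$; in the split case, substituting a clique of size at least two for a body vertex of a headless spider creates an induced kite (the two expanded vertices, a third body vertex, the foot adjacent to the expanded pair, and the foot adjacent to that third body vertex), so complementation disposes of foot expansions, while nets and complements of nets contain none of the ten obstructions. Note also that you prove the full equivalence, which is stronger than the one-directional ``if'' actually printed and consistent with the paper's describing the result as a characterization. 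What your route buys is self-containedness --- the theorem becomes a consequence of Theorem~\ref{thm: main structural} plus finite checking, in the same style as the hereditary-unigraph development --- at the price of the case-by-case bookkeeping you candidly flag; what the paper's citation route buys is brevity, at the price of resting the statement on external structure theorems.
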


Note that the canonical decompositions of complete, edgeless, threshold, matroidal, and matrogenic graphs clearly satisfy the hypotheses of Theorem~\ref{thm: main structural}. In particular, matrogenic graphs are simply hereditary unigraphs where the possibilities for $G_0$ are restricted, and the canonical components with more than one vertex are simply headless prime spiders (rather than having been obtained from prime spiders by substituting cliques or independent sets of size at least 2 for body vertices or feet).

\section{Erd\H{o}s--Gallai equalities and the canonical decomposition} \label{sec: five}
Having characterized the structure of hereditary unigraphs in Theorem~\ref{thm: main structural}, in the remainder of the paper we turn to the degree sequences of these graphs. In preparation for our main result (Theorem~\ref{thm: deg seq for her unis, part II}), in this section we describe the relationship between the canonical decomposition and the Erd\H{o}s--Gallai inequalities. For convenience, we abbreviate the latter as \emph{EG-inequalities}.

As mentioned in the introduction, threshold graphs and split graphs have degree sequence characterizations in terms of equalities among the EG-inequalities. Given a degree sequence $d=(d_1,\dots,d_n)$, let $EG(d)$ denote the list of nonnegative integers $k$ for which the $k$th EG-inequality holds with equality, ordered from smallest to largest. We adopt the convention that when the starting value of a sum's index exceeds the ending value, the value of the sum is 0; hence $EG(d)$ begins with $0$ for every $d$. Henceforth, let $m(d)=\max\{i:d_i \geq i-1\}$.

\begin{thm} \label{thm: deg seqs for split, pseudo}
Let $G$ be a graph with degree sequence $d=(d_1,\dots,d_n)$, and let $m=m(d)$.
\begin{enumerate}
\item[\textup{(a)}] (Hammer et al.~\cite{HammerEtAl78}) $G$ is threshold if and only if \[\sum_{i=1}^k d_i = k(k-1) + \sum_{i=k+1}^n \min\{k, d_i\}\] for $k=1,\dots,m(d)$.
\item[\textup{(b)}] (Hammer and Simeone~\cite{HammerSimeone81}; Tyshkevich et al.~\cite{Tyshkevich80,TyshkevichEtAl81}) $G$ is split if and only if \[\sum_{i=1}^m d_i = m(m-1)+\sum_{i=m+1}^n d_i.\]
\item[\textup{(c)}] (Maffray and Preissmann~\cite{MaffrayPreissmann94}) $G$ is pseudo-split if and only if $G$ is split or \[\sum_{i=1}^q d_i = q(q+4)+\sum_{i=q+6}^n d_i\] and \[d_{q+1}=d_{q+2}=d_{q+3}=d_{q+4}=d_{q+5}=q+2,\] where $q=\max\{i:d_i \geq i+4\}$.
\end{enumerate}
\end{thm}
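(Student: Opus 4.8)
The plan is to run all three parts off a single combinatorial reading of when an Erd\H{o}s--Gallai inequality is tight. Fix a realization $G$ of $d=(d_1,\dots,d_n)$ and let $S$ be any set of $k$ vertices carrying the $k$ largest degrees. Counting edge-endpoints incident to $S$ gives
\[\sum_{i=1}^k d_i = 2\,e(S) + e(S,\overline{S}),\]
where $e(S)$ is the number of edges inside $S$ and $e(S,\overline{S})$ the number leaving $S$. Since $e(S)\le\binom{k}{2}$ and each $v\notin S$ contributes $|N_G(v)\cap S|\le\min\{k,d_v\}$ to $e(S,\overline{S})$, the $k$th EG-inequality drops out, and it is an \emph{equality} precisely when (i) $S$ induces a clique and (ii) every $v\notin S$ with $d_v\ge k$ is adjacent to all of $S$, while every $v\notin S$ with $d_v\le k$ has $N_G(v)\subseteq S$. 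I would isolate this as a lemma, remarking that equality is a property of $d$ alone, so the structure (i)--(ii) then holds in \emph{every} realization, and conversely exhibiting it in \emph{one} realization certifies the equality; ties at the boundary $d_k=d_{k+1}$ are harmless since the relevant sums depend only on the degree multiset.

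Part (b) is the cleanest application. For $i>m=m(d)$ one has $d_i<(i)-1\le m$, whence $\min\{m,d_i\}=d_i$ and the displayed split identity is exactly the $m$th EG-equality. Applying the lemma at $k=m$: the top-$m$ set $S$ is a clique, and since no vertex outside $S$ has degree $\ge m$, clause (ii) forces each such vertex to send all of its edges into $S$; in particular $V(G)\setminus S$ is independent, so $G$ is split. For the converse I would verify the tight structure directly in $G$: given a split partition, choose the clique as large as possible, observe its size equals $m(d)$ (adjoining an $(m{+}1)$st vertex would need $d_{m+1}\ge m$, impossible), take $S$ to be that clique, and invoke the easy half of the lemma.

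For part (a) I would bootstrap from (b). Suppose the equalities hold for all $k\le m$; by (b), $G$ is split with clique $S_m$ and independent set $I$, and the top-$k$ sets nest as $S_1\subset\cdots\subset S_m$. For an independent vertex $u$ with $d_u=\delta$, applying clause (ii) at the single level $k=\delta$ gives both $S_\delta\subseteq N_G(u)$ and $N_G(u)\subseteq S_\delta$, so $N_G(u)=S_\delta$ is an initial segment of the clique ordering. Thus the independent neighborhoods are linearly ordered by inclusion, which is exactly the nested-neighborhood (threshold) condition. Conversely, if $G$ is threshold I would use this initial-segment description: for each $k\le m$, $S_k$ is a clique and $|N_G(v)\cap S_k|=\min\{k,d_v\}$ for every $v\notin S_k$ (immediate for clique vertices, and equal to $|S_{d_v}\cap S_k|$ for independent $v$), so the lemma yields equality at every $k\le m$.

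Part (c) is where I expect the real work. Forward direction: if $G$ is pseudo-split but not split, Theorem~\ref{thm: pseudo-split char} supplies a partition $V(G)=A\cup B\cup C$ with $A$ independent, $B$ a clique, and $G[C]\cong C_5$ completely joined to $B$ and anti-joined to $A$. Writing $q=|B|$, the degrees sort as $B$ (each $\ge q+4$), then the five vertices of $C$ (each exactly $q+2$), then $A$ (each $\le q$), so $q=\max\{i:d_i\ge i+4\}$ and $d_{q+1}=\cdots=d_{q+5}=q+2$; an edge count over $B$ gives $\sum_{i=1}^q d_i = q(q-1)+5q+e(B,A)=q(q+4)+\sum_{i=q+6}^n d_i$, the displayed sum identity. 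For the converse I would run an edge-counting argument parallel to the lemma but with the five forced degree-$(q+2)$ vertices peeled off, so that the sum identity forces, in every realization, the top-$q$ vertices to form a clique completely joined to those five, and all remaining vertices to be independent with neighbors only in that clique. The hard part will be pinning down that the five middle vertices induce \emph{precisely} $C_5$ rather than some other configuration absorbing the same degrees: I would argue that each of the five must spend its two residual edges among the five (an $A$-edge or a missing $B$-edge breaks the sum equality), making $G[C]$ a $2$-regular graph on five vertices, and then use the hypothesis that $G$ is not split to exclude the only other $2$-regular possibility and conclude $G[C]\cong C_5$.
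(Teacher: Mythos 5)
The paper never proves this theorem: all three parts are quoted as known results, with citations to Hammer--Ibaraki--Simeone~\cite{HammerEtAl78}, Hammer--Simeone~\cite{HammerSimeone81}, and Maffray--Preissmann~\cite{MaffrayPreissmann94}. So your proposal cannot be compared to an in-paper proof of this statement; what it does instead is give a self-contained derivation, and the notable point is that your central tightness lemma is, almost verbatim, the paper's own Lemma~\ref{lem: EG equality}: the $k$th EG-inequality is tight iff the top-$k$ vertices form a clique $Q$, the vertices of degree less than $k$ send all their edges into $Q$, and the remaining vertices are complete to $Q$ (in the paper's language, $G=(G[P\cup Q],P,Q)\circ G[T]$). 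The paper proves that lemma by the same two-sided edge count you use, but deploys it only toward its new results (Theorem~\ref{thm: deg seqs and canon comps}); you are running the same engine in reverse to recover the classical theorems the paper merely cites. Your part (b) is exactly the Hammer--Simeone argument; part (a) correctly extracts, from the equality at $k=d_u$, that each independent vertex's neighborhood is an initial segment $S_{d_u}$ of the clique ordering; and part (c)'s converse --- peeling off the five forced degree-$(q+2)$ vertices, showing the sum identity forces the top-$q$ set to be a clique complete to them with everything else independent and attached only to that clique, hence $G[C]$ is $2$-regular on five vertices --- is sound and is essentially the Maffray--Preissmann counting argument. This approach buys a uniform, realization-independent treatment (tightness is a property of $d$ alone, so structure in one realization transfers to all), which is precisely the philosophy of the paper's Section~\ref{sec: five}.

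Three small repairs are needed. In (b), the chain ``$d_i<(i)-1\le m$'' is garbled: for $i>m+1$ one has $i-1>m$; what you want is $d_i\le d_{m+1}<(m+1)-1=m$, which gives $\min\{m,d_i\}=d_i$. In (a), ``split plus nested independent neighborhoods'' is not literally the standard threshold characterization (that is totality of the vicinal preorder on \emph{all} vertices); you need the short additional check that incomparable traces of two clique vertices on the independent side would yield two incomparable independent neighborhoods, or simply verify directly that your initial-segment structure admits no induced $P_4$. In (c), there is no ``other $2$-regular possibility'' on five vertices to exclude: a $2$-regular simple graph is a disjoint union of cycles of length at least $3$, and $5$ admits no such partition other than a single $5$-cycle --- so the appeal to ``$G$ is not split'' at the end is both unnecessary and unavailable, since in that direction of the equivalence you are not entitled to assume it.
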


Thus $d$ is the degree sequence of a split graph if and only if $m(d)$ is a term of $EG(d)$. Furthermore, as we will note later in Corollary~\ref{cor: k leq m}, if the $k$th EG-inequality holds with equality, then $k \leq m(d)$, so $d$ is the degree sequence of a threshold graph if and only if $EG(d)=(0,1,\dots,m(d))$. The first equality in Theorem~\ref{thm: deg seqs for split, pseudo}(c) is equivalent, in light of the other hypothesis, to the requirement that $EG(d)$ contain $q$ as a term. The form of the characterization in (c), i.e., requirements on both $EG(d)$ and on specific terms of the degree sequence, will appear in our characterization of the degree sequences of hereditary unigraphs in Section~\ref{sec: six}.

All threshold graphs having at least two vertices are canonically decomposable, as are pseudo-split graphs inducing $C_5$ (other than $C_5$ itself). We now show that, as with the threshold and split graphs, canonically decomposable graphs can be recognized from their degree sequences by computing $EG(d)$. Before giving our result, we note that Tyshkevich~\cite{Tyshkevich00} provided necessary and sufficient conditions for a degree sequence to belong to a decomposable graph. The conditions closely follow an instance of the Fulkerson--Hoffman--McAndrew criteria for testing if a degree sequence is graphic. These latter criteria are equivalent to the EG-inequalities for sequences of nonnegative integers with even sum (Chapter 3 of~\cite{MahadevPeled95} discusses these and several other such criteria and proves their collective equivalence). Our purpose for defining and using $EG(d)$ is to make clear how our characterization of hereditary unigraphs in Section~\ref{sec: six} relates to the characterizations in Theorem~\ref{thm: deg seqs for split, pseudo} and those known for other classes. Using $EG(d)$ will also allow us to present the degree sequence characterization of hereditary unigraphs in a more straightforward way, rather than forcing us to first determine the degree sequences of the canonical components of a graph.

We begin with a few observations on the canonical decomposition.

\begin{obs}\label{obs: degree order}
Suppose that $G$ is a graph with canonical decomposition $(G_\ell,A_\ell,B_\ell) \circ \dots \circ (G_1,A_1,B_1) \circ G_0$, where $A_0$ and $B_0$ partition $V(G_0)$ into an independent set and clique, respectively, if $G_0$ is split. Consider the list \[B_\ell, \dots,B_0, A_0, \dots,A_\ell\] if $G_0$ is split, and the list \[B_\ell, \dots,B_1, V(G_0), A_1, \dots,A_\ell\] if $G_0$ is not split. Let $u$ and $v$ be vertices from distinct sets in the list. If the set containing $u$ precedes the set containing $v$ in the list, then $d_G(u)\geq d_G(v)$. The converse is true as well unless $d_G(u)=d_G(v)$, in which case the canonical components containing $u$ and $v$ are both isomorphic to $K_1$.
\end{obs}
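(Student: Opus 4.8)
The plan is to read off the degree ordering directly from the composition structure, exploiting the fact that in a composition the ``higher'' clique sets $B_i$ dominate everything to their right, while the ``higher'' independent sets $A_i$ are dominated by everything to their left. First I would fix notation: write the list as $L_\ell,\dots,L_1,L_0,L_{-1},\dots,L_{-\ell}$ (using $L_0=V(G_0)$ in the nonsplit case, and splitting $G_0$ into $B_0,A_0$ straddling the center in the split case), and recall from the paragraph defining $\circ$ that each vertex of $B_i$ is adjacent to \emph{every} vertex in $\bigcup_{j<i}V(G_j)$ and each vertex of $A_i$ is adjacent to \emph{none} of them.

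\medskip

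The main computation is to compare $d_G(u)$ and $d_G(v)$ when $u$ lies in an earlier set than $v$ in the list. I would handle this by partitioning the neighbors of each vertex into three zones relative to $v$'s canonical component: vertices in components strictly to the left of $v$'s, vertices in $v$'s own component, and vertices in components strictly to the right. The key monotonicity facts are: (1) every vertex lying in a set \emph{left} of a given clique set $B_i$ is adjacent to all of $B_i$ (since those left vertices sit in higher-indexed components whose clique parts dominate $B_i$, or in $B_j$ with $j>i$ which are mutually adjacent); and (2) every vertex in a set \emph{right} of an independent set $A_i$ is nonadjacent to all of $A_i$. Using these, one shows that moving $u$'s position one set to the left (or equivalently comparing a left set against a right set) can only add forced neighbors and remove forced non-neighbors, giving $d_G(u)\ge d_G(v)$. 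The cleanest route is probably to argue for adjacent sets in the list and then chain the inequalities, since consecutive sets differ by exactly the domination/non-domination relation that the composition enforces.

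\medskip

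For the converse, I would prove the contrapositive of its failure: if $u$ and $v$ lie in distinct sets with $d_G(u)=d_G(v)$, then both their components are $K_1$. The idea is that the weak inequality $d_G(u)\ge d_G(v)$ from the forward direction becomes strict whenever there is any ``extra'' adjacency that $u$ enjoys but $v$ does not, beyond the forced ones. I would isolate where slack can occur: if $u$'s component or $v$'s component has more than one vertex, then within that component there is a vertex whose presence creates a genuine degree gap (e.g., a body vertex of a spider or a second vertex in $G_0$ contributes an internal neighbor to $u$ that has no counterpart for $v$, or vice versa). Making this precise amounts to checking that equality $d_G(u)=d_G(v)$ across the boundary of two distinct sets forces every intermediate inequality to be tight, which in turn forces each of the two components to contribute nothing internally, i.e.\ to be a single vertex. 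The main obstacle I anticipate is exactly this converse bookkeeping: one must carefully account for the contribution of the \emph{internal} structure of the components $G_i$ (which can be expanded spiders, $C_5$, etc.) to the degrees, and confirm that any nontrivial internal structure breaks the tie. I would organize this by writing an explicit formula for $d_G(u)$ as (forced neighbors from other components) $+$ (internal degree within $u$'s own component) and then comparing the two formulas term by term, showing the only way to equalize across distinct sets is the degenerate $K_1$ case.
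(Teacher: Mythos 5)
Your overall architecture is the right one---the paper states this as an unproved observation precisely because it reduces to the bookkeeping you describe: write $d_G$ of each vertex as (internal degree in its component) plus (forced neighbors from other components) and compare. But your proposed shortcut for the forward direction, comparing only \emph{consecutive} sets in the list and chaining, does not work as stated: the intermediate sets can be empty, so there may be no vertex to chain through. Trivial components have exactly one of $A_i,B_i$ empty (the paper notes, for instance, that isolated vertices give components with $B_i=\emptyset$), and such components are ubiquitous---in a threshold graph \emph{every} component is of this kind. The fix is to compare arbitrary pairs of sets directly; e.g.\ for $u\in B_i$ and $v\in B_{i'}$ with $i>i'$ one gets
\[
d_G(u)-d_G(v)=\bigl(d_{G_i}(u)-(|B_i|-1)\bigr)+\bigl((|V(G_{i'})|-1)-d_{G_{i'}}(v)\bigr)+\sum_{i'<j<i}|A_j|,
\]
a sum of nonnegative terms. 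This is a repairable flaw, not the main problem.

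The genuine gap is in your converse. Tightness of the inequalities gives you only statements like ``$u\in B_i$ has no neighbor in $A_i$'' and ``$v\in A_{i'}$ is adjacent to all of $B_{i'}$,'' and your leap from ``contributes nothing internally'' to ``is a single vertex'' is exactly the step that needs justification. It is false for general splitted compositions: in $K_{1,3}=(P_3,\{a_1,a_2\},\{b\})\circ K_1$, the leaf $a_1$ and the vertex of $G_0$ lie in distinct sets of the list and have equal degree $1$, yet the component containing $a_1$ is $P_3\ncong K_1$. So any correct proof must use that canonical components are \emph{indecomposable}, and your sketch never invokes this; the mechanisms you gesture at (body vertices of spiders) belong to the hereditary-unigraph setting, whereas this observation concerns arbitrary graphs. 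The missing ingredient is precisely the paper's Observation~\ref{obs: nbr and non}: in a nontrivial indecomposable splitted component, every vertex of $B_i$ has a neighbor in $A_i$ and every vertex of $A_i$ has a nonneighbor in $B_i$. (If you prefer not to cite it, it has a one-line proof: if $u\in B_i$ had no neighbor in $A_i$, then $G_i=(G_i-u,\,A_i,\,B_i\setminus\{u\})\circ K_1$, contradicting indecomposability, and symmetrically for a vertex of $A_i$ adjacent to all of $B_i$.) Once this fact is inserted at the tightness step, your term-by-term comparison does close the argument.
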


Call a canonically indecomposable graph \emph{nontrivial} if it has more than one vertex. By Theorem~\ref{thm: A4 indecomp}, in a nontrivial indecomposable graph $H$ each vertex belongs to an induced subgraph of $H$ isomorphic to $2K_2$, $C_4$, or $P_4$. This leads to the following.

\begin{obs}\label{obs: nbr and non}
Every vertex in a nontrivial canonically indecomposable graph has both a neighbor and a non-neighbor in the graph. If $(H,A,B)$ is a splitted indecomposable graph, then each vertex of $A$ has both a neighbor and a nonneighbor in $B$, and each vertex of $B$ has both a neighbor and a nonneighbor in $A$.
\end{obs}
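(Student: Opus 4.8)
The plan is to obtain both statements from the induced-subgraph characterization of indecomposability in Theorem~\ref{thm: A4 indecomp}, together with the elementary observation that in each of the three ``building block'' graphs $2K_2$, $C_4$, and $P_4$, every vertex has both a neighbor and a non-neighbor within the block. (In $2K_2$ each vertex has one neighbor and two non-neighbors; in $C_4$ each vertex has two neighbors and one non-neighbor; in $P_4$ the endpoints have one neighbor and two non-neighbors while the midpoints have two neighbors and one non-neighbor.) Since these blocks occur as \emph{induced} subgraphs, the relevant adjacencies and non-adjacencies transfer verbatim to the ambient graph.

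For the first statement, I would fix a vertex $u$ in a nontrivial canonically indecomposable graph $G$ and choose any other vertex $v$, which exists by nontriviality. Applying Theorem~\ref{thm: A4 indecomp} to the pair $u,v$ yields a sequence $H_1,\dots,H_t$ of induced subgraphs, each isomorphic to $2K_2$, $C_4$, or $P_4$, with $u \in V(H_1)$. Since $u$ lies in the induced subgraph $H_1$ and every vertex of each block has a neighbor and a non-neighbor inside that block, $u$ has both a neighbor and a non-neighbor in $G$, as claimed.

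For the second statement, let $(H,A,B)$ be a splitted indecomposable graph with $H$ nontrivial (the $K_1$ case being degenerate). Because $H$ is split, Theorem~\ref{thm: split forb sub} shows it is $\{2K_2,C_4,C_5\}$-free, so the block $H_1$ produced by Theorem~\ref{thm: A4 indecomp} for a prescribed vertex can be neither $2K_2$ nor $C_4$; it must be an induced $P_4$. The one structural point requiring care is the position of an induced $P_4$ relative to the partition $(A,B)$: enumerating the ways to $2$-color the four vertices of a $P_4$ so that one class is independent and the other a clique, the only admissible coloring places the two endpoints in $A$ and the two midpoints in $B$. Hence a vertex $a \in A$ lying in such a $P_4$ is an endpoint, whose $P_4$-neighbor and $P_4$-non-neighbor both lie in $B$; and a vertex $b \in B$ is a midpoint, whose $P_4$-neighbor and $P_4$-non-neighbor both lie in $A$. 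Induced-ness then transfers these to $H$, giving each vertex of $A$ a neighbor and non-neighbor in $B$ and each vertex of $B$ a neighbor and non-neighbor in $A$.

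I expect essentially all the content to reside in the short case analysis pinning down the position of an induced $P_4$ within a split partition; the remaining arguments are direct applications of Theorems~\ref{thm: A4 indecomp} and~\ref{thm: split forb sub}. Thus the $P_4$-placement step, while routine, is the one place where care is genuinely needed, and it is what makes the two halves of the observation fit together.
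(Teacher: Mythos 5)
Your proposal is correct and follows essentially the same route as the paper: the paper justifies this observation precisely by noting (via Theorem~\ref{thm: A4 indecomp}) that every vertex of a nontrivial indecomposable graph lies in an induced $2K_2$, $C_4$, or $P_4$, and your refinement for the splitted case---that split graphs force the block to be a $P_4$, whose unique partition into an independent set and a clique places endpoints in $A$ and midpoints in $B$---is exactly the argument the paper relies on (and reuses explicitly in Lemma~\ref{lem: unique split ptn}).
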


\begin{lem}\label{lem: EG equality}
Let $Q$ be a set of vertices of $G$, and let $\overline{Q} = V(G) - Q$. Then \[\sum_{q \in Q} d_G(q) = |Q|(|Q|-1) + \sum_{p \in \overline{Q}} \min\{|Q|, d_G(p)\}\] if and only if $G=(G[P\cup Q], P, Q) \circ G[T]$, where $P$ consists of all vertices of $G$ with degree less than $|Q|$, and $T=V(G)-P-Q$.
\end{lem}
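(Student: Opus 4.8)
The plan is to recognize the displayed equation as the case $k=|Q|$ of the Erd\H{o}s--Gallai inequality, applied not to a degree-sorted prefix but to the arbitrary vertex set $Q$, and to prove it by double counting together with an analysis of when the underlying bounds are tight. Writing $k=|Q|$ and counting the contribution of each edge to $\sum_{q\in Q}d_G(q)$, I would first record the identity $\sum_{q\in Q}d_G(q)=2e(Q)+e(Q,\overline{Q})$, where $e(Q)$ is the number of edges inside $Q$ and $e(Q,\overline{Q})$ the number of edges joining $Q$ to $\overline{Q}$. Two elementary bounds then drive everything: $2e(Q)\le k(k-1)$, since $Q$ spans at most $\binom{k}{2}$ edges; and, for each $p\in\overline{Q}$, $|N_G(p)\cap Q|\le\min\{k,d_G(p)\}$, since $p$ has at most $k$ possible neighbors in $Q$ and at most $d_G(p)$ neighbors overall, so that $e(Q,\overline{Q})=\sum_{p\in\overline{Q}}|N_G(p)\cap Q|\le\sum_{p\in\overline{Q}}\min\{k,d_G(p)\}$. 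Adding these shows the left side never exceeds the right side, and equality holds if and only if both bounds are attained.

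The heart of the argument is translating this double tightness into the composition structure. Equality in the first bound says precisely that $Q$ is a clique. Equality in the second, examined vertex by vertex, says that each $p\in\overline{Q}$ satisfies $|N_G(p)\cap Q|=\min\{k,d_G(p)\}$; splitting on the value of $d_G(p)$, this means that every $p\in\overline{Q}$ with $d_G(p)\ge k$ is adjacent to all of $Q$, while every $p\in\overline{Q}$ with $d_G(p)<k$ has all of its neighbors inside $Q$. I would then set $T=\{p\in\overline{Q}:d_G(p)\ge k\}$ and observe that the remaining vertices of $\overline{Q}$ are exactly those of degree less than $k$, which is the set $P$ of the statement. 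The condition that each low-degree $p$ has all neighbors in $Q$ forces $P$ to be an independent set (an edge inside $P$ would be a neighbor outside $Q$) with no edges to $T$, while the high-degree condition makes every vertex of $Q$ adjacent to all of $T$. These are exactly the adjacencies defining the composition $(G[P\cup Q],P,Q)\circ G[T]$: the set $P\cup Q$ induces a split graph with independent set $P$ and clique $Q$, and the vertices of the clique $Q$ are joined to all of $V(G[T])=T$ while those of $P$ are joined to none. Hence equality yields the claimed decomposition.

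For the converse I would argue directly from the composition rather than re-deriving tightness. Assuming $G=(G[P\cup Q],P,Q)\circ G[T]$ with $P$ the set of vertices of degree less than $k$ and $T=V(G)-P-Q$, the clique $Q$ gives $2e(Q)=k(k-1)$; each $p\in T$ is joined to all of $Q$ and so contributes $k=\min\{k,d_G(p)\}$ (using $d_G(p)\ge k$); and each $p\in P$, lying in the independent set and joined to none of $T$, has all its neighbors in $Q$, contributing $d_G(p)=\min\{k,d_G(p)\}$ (using $d_G(p)<k$). Summing these contributions recovers $k(k-1)+\sum_{p\in\overline{Q}}\min\{k,d_G(p)\}$ and hence the equality.

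The step I expect to require the most care is the clean identification of $P$ with ``all vertices of $G$ of degree less than $k$'': one must check that $P$ and $Q$ are disjoint, i.e.\ that no vertex of the clique $Q$ has degree below $k$. Under the tightness conditions every vertex of $Q$ that meets $\overline{Q}$ automatically has degree at least $k$, so the point is to confirm that the degree threshold correctly separates the independent set $P$ from the clique $Q$ and from $T$; I would verify this using Observation~\ref{obs: degree order} together with the adjacency pattern just established, taking care with the case where $T$ or $\overline{Q}$ is empty. The remaining verifications---that $P$ is genuinely independent and that the two-sided bound is tight simultaneously rather than only on average---are then routine.
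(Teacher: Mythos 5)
Your proof is correct and follows essentially the same route as the paper's: both are the same double count of edges between $Q$ and $\overline{Q}$ (your identity $\sum_{q\in Q}d_G(q)=2e(Q)+e(Q,\overline{Q})$ is just a rearrangement of the paper's sandwich $\sum_{q\in Q}d_G(q)-|Q|(|Q|-1)\le |Q,\overline{Q}|\le \sum_{p\in\overline{Q}}\min\{|Q|,d_G(p)\}$), with identical tightness analysis and the same translation of the two equality conditions into the composition structure. The only differences are presentational: the paper states both directions at once as a chain of equivalences rather than verifying the converse separately, and it implicitly reads $P$ as the set of low-degree vertices of $\overline{Q}$, which disposes of the $P\cap Q$ disjointness worry you raise at the end.
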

\begin{proof}
Let $|Q,\overline{Q}|$ denote the number of edges having an endpoint in each of $Q$ and $\overline{Q}$. Observe that $\sum_{q \in Q} d_G(q) - |Q|(|Q|-1)$ is a lower bound on $|Q,\overline{Q}|$, where equality holds if and only if every pair of vertices in $Q$ is adjacent. Observe also that $\sum_{p \in \overline{Q}} \min\{|Q|,d_G(p)\}$ is an upper bound for $|Q,\overline{Q}|$, with equality holding if and only if those vertices in $\overline{Q}$ whose degree is less than $|Q|$ (i.e., vertices in $P$) only have neighbors in $Q$, and those vertices whose degree is at least $|Q|$ (i.e., vertices in $T$) are adjacent to every vertex of $Q$. Thus \[\sum_{q \in Q} d_G(q) = |Q|(|Q|-1) + \sum_{p \in \overline{Q}} \min\{|Q|, d_G(p)\}\] if and only if $Q$ is a clique whose neighbors are all adjacent to every vertex of $T$ while $P$ is an independent set whose vertices are adjacent to no vertex of $T$. This latter condition is clearly equivalent to $G=(G[P\cup Q], P, Q) \circ G[T]$.
\end{proof}

\begin{cor}\label{cor: k leq m}
Let $d$ be a graphic sequence. If $d$ satisfies the $k$th EG-inequality with equality, then $k \leq m(d)$.
\end{cor}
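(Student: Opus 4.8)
The plan is to recognize the $k$th EG-equality as a single instance of Lemma~\ref{lem: EG equality}. Since $d$ is graphic, I would fix any realization $G$ of $d$ and let $Q$ be a set of $k$ vertices carrying the $k$ largest degrees $d_1,\dots,d_k$ (ties straddling the boundary between positions $k$ and $k+1$ may be broken arbitrarily). Because the $k$th EG-inequality is a statement purely about the sorted numbers $d_1,\dots,d_n$, its holding with equality is exactly the assertion that
\[\sum_{q \in Q} d_G(q) = |Q|(|Q|-1) + \sum_{p \in \overline{Q}} \min\{|Q|, d_G(p)\}\]
with $|Q|=k$ and $\overline{Q}=V(G)-Q$: the left-hand side is $\sum_{i=1}^k d_i$, and since the multiset of degrees of $\overline{Q}$ is $\{d_{k+1},\dots,d_n\}$, the right-hand side is $k(k-1)+\sum_{i=k+1}^n \min\{k,d_i\}$. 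Thus the hypothesis of the corollary translates verbatim into the hypothesis of the lemma for this particular $Q$.

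Next I would invoke Lemma~\ref{lem: EG equality}, which converts the equality into the structural conclusion $G=(G[P\cup Q],P,Q)\circ G[T]$. The only feature of this decomposition I need is that $Q$ is the distinguished clique of a splitted graph, so $Q$ is a clique on exactly $k$ vertices. Consequently each vertex of $Q$ is adjacent to the other $k-1$ vertices of $Q$ and therefore has degree at least $k-1$ in $G$.

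Finally I would read off the degree bound. Since $Q$ consists of the $k$ highest-degree vertices, the smallest of these degrees is $d_k$, so the clique bound gives $d_k \geq k-1$. By the definition $m(d)=\max\{i:d_i\geq i-1\}$, the index $k$ lies in the set being maximized, whence $k\leq m(d)$. I do not expect a genuine obstacle here; the argument is immediate once Lemma~\ref{lem: EG equality} is applied. The only points requiring a word of care are the boundary tie-breaking in the choice of $Q$ (harmless, as it does not change the numerical equality) and the degenerate case $k=0$, where the EG-equality holds automatically and $0\leq m(d)$ needs no argument.
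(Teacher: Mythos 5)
Your proof is correct and follows the same route as the paper: choose $Q$ to be the $k$ vertices of largest degree, apply Lemma~\ref{lem: EG equality} to conclude that $Q$ is a clique, and deduce $d_k \geq k-1$, hence $k \leq m(d)$. The paper's version is just terser, leaving implicit the translation of the numerical equality into the lemma's hypothesis and the tie-breaking and $k=0$ remarks that you spell out.
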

\begin{proof}
Let $Q$ be a set of $k$ vertices of $G$ with the largest degrees in $G$. By Lemma~\ref{lem: EG equality}, every vertex in $Q$ is adjacent to the $k-1$ other vertices of $Q$, so $m(d) \geq k$.
\end{proof}

Given a degree sequence $d$, the \emph{conjugate} sequence $d^*$ is defined by letting $d^*_j = \max\{i: d_i \geq j\}$ for $j\geq 0$. For nonnegative $j$ let $\delta_j=|\{i:i>j\text{ and }d_i=j\}|$.

\begin{thm}~\label{thm: deg seqs and canon comps}
Let $G$ be a graph with degree sequence $d=(d_1,\dots,d_n)$ and vertex set $\{v_1,\dots,v_n\}$, indexed so that $d_G(v_i)=d_i$. Suppose that $(G_\ell,A_\ell,B_\ell) \circ \dots \circ (G_1,A_1,B_1) \circ G_0$ is the canonical decomposition of $G$, where $A_0$ and $B_0$ partition $V(G_0)$ into an independent set and a clique, respectively, if $G_0$ is split.
\begin{enumerate}
\item[\textup{(a)}] A nonempty set $W \subseteq V(G)$ is equal to the clique $B_j$ in the canonical component $(G_j,A_j,B_j)$ if and only if $W=\{v_i : k<i \leq k'\}$ for a pair $k,k'$ of consecutive terms in $EG(d)$. In this case the corresponding independent set $A_j$ is precisely the set $\{v \in V(G): k<d_G(v)<k'\}$.
\item[\textup{(b)}] Given a term $k$ of $EG(d)$, if $i>k$ and $d_G(v_i)=k$ then the canonical component containing $v_i$ is trivial.
\item[\textup{(c)}] If $k$ is a positive term of $EG(d)$ and $(G_j,A_j,B_j)$ is the splitted canonical component containing $v_k$, then \[d^*_k=\left|V(G_0) \cup \bigcup_{i=1}^\ell B_i \cup \bigcup_{i=1}^{j-1} A_i\right|.\]
\item[\textup{(d)}] Let $(G_j,A_j,B_j)$ be a splitted canonical component, and let $k$ and $k'$ be the consecutive terms of $EG(d)$ such that $B_j = \{v_i:k<i\leq k'\}$. If $v \in B_j$, then $d_G(v) = d_{G_j}(v)-k'+k+d^*_{k'}.$ If $v \in A_j$, then $d_G(v) =d_{G_j}(v)+k$.
\end{enumerate}
\end{thm}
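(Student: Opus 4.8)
The plan is to establish the four statements in sequence, since each builds on the structural picture provided by Lemma~\ref{lem: EG equality} and the degree-ordering of Observation~\ref{obs: degree order}. The unifying idea is that a term $k \in EG(d)$ witnesses, via Lemma~\ref{lem: EG equality} applied to the top $k$ vertices, a decomposition point: taking $Q$ to be the $k$ highest-degree vertices, the $k$th EG-equality says precisely that $G = (G[P \cup Q], P, Q) \circ G[T]$ where $P$ collects vertices of degree below $k$ and $T = V(G) - P - Q$. Thus each term of $EG(d)$ marks a place where $G$ splits off a composition factor, and consecutive terms $k < k'$ should bracket exactly one splitted canonical component's clique.

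\textbf{Part (a).} For the forward direction I would start from a clique $B_j$ of a canonical component and show that the set $Q$ of the top $|B_\ell| + \cdots + |B_j|$ vertices (which, by Observation~\ref{obs: degree order}, is exactly $B_\ell \cup \cdots \cup B_j$, the clique formed by all composition factors from $j$ upward) satisfies the EG-equality, using that this union is a clique all of whose members dominate everything below. This produces the two bracketing terms $k$ and $k'$ of $EG(d)$. For the reverse direction, given consecutive terms $k < k'$ of $EG(d)$, apply Lemma~\ref{lem: EG equality} at both $k$ and $k'$; the two decompositions $G = (\cdot, P_k, Q_k) \circ G[T_k]$ and $G = (\cdot, P_{k'}, Q_{k'}) \circ G[T_{k'}]$ differ by exactly one composition factor, and because there is no term of $EG(d)$ strictly between $k$ and $k'$, that intervening factor must be \emph{indecomposable}, hence a single canonical component whose clique is $\{v_i : k < i \le k'\}$. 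The identification of $A_j$ as $\{v : k < d_G(v) < k'\}$ then follows from Observation~\ref{obs: degree order}: feet of $G_j$ have degrees strictly between those of $B_j$ (degree $\ge k'$ range) and those of the lower factors, and one uses Observation~\ref{obs: nbr and non} to see that each such vertex has both a neighbor and a non-neighbor in $B_j$, placing it in $A_j$ rather than in a clique.

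\textbf{Parts (b) and (c).} For (b), if $i > k$ with $d_G(v_i) = k$ where $k \in EG(d)$, then $v_i$ lies below the decomposition point at $k$; by Lemma~\ref{lem: EG equality} the set $P$ of degree-$<k$ vertices is independent and $v_i$ (having degree exactly $k$) is dominated into $T$, but a short degree count using Observation~\ref{obs: degree order} forces its canonical component to be a single vertex — a nontrivial component would, by Observation~\ref{obs: nbr and non}, give $v_i$ a non-neighbor inside its own component and contradict the exact degree value $k$. For (c), I would compute the conjugate $d^*_k = \max\{i : d_i \ge k\} = |\{v : d_G(v) \ge k\}|$ directly: by the degree-ordering, the vertices of degree at least $k$ are exactly those in $V(G_0) \cup \bigcup_i B_i$ together with the feet lying in components strictly above the one containing $v_k$, which is the indexed union $\bigcup_{i=1}^{j-1} A_i$. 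The delicate point is pinning down \emph{which} $A_i$ sit above the relevant threshold, and this is controlled by (a)'s identification of degree ranges.

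\textbf{Part (d) and the main obstacle.} Part (d) is a direct degree bookkeeping argument: for $v \in B_j$, its $G$-degree is its degree $d_{G_j}(v)$ inside the component, plus edges to everything strictly below (all of $V(G_0) \cup \bigcup_{i<j} V(G_i)$), minus the overcount built into $d_{G_j}(v)$, and the quantities $k$, $k'$, $d^*_{k'}$ precisely convert component-internal degrees into global ones via the counts established in (a) and (c). The analogous computation for $v \in A_j$ adds only the $k$ vertices of the cliques below. \textbf{The hard part will be} part (a)'s reverse direction — specifically, arguing that no canonical decomposition point is \emph{missed} by $EG(d)$ and that consecutive EG-terms isolate exactly one indecomposable factor. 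This requires showing that every clique boundary $B_j$ produces an EG-equality (so that $EG(d)$ is not too sparse) while simultaneously ruling out spurious equalities inside a single indecomposable component; the latter rests on Observation~\ref{obs: nbr and non}, since within a nontrivial indecomposable splitted graph no vertex is universal, so the upper and lower bounds in Lemma~\ref{lem: EG equality} cannot coincide at an interior index.
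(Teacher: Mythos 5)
Your proposal takes essentially the same route as the paper's proof: both hinge on Lemma~\ref{lem: EG equality} to identify terms of $EG(d)$ with composition cuts, combined with Observations~\ref{obs: degree order} and~\ref{obs: nbr and non} and the uniqueness of the canonical decomposition to show that cuts and clique boundaries coincide, and parts (b)--(d) are the same degree bookkeeping in both treatments. The one soft spot is your closing justification that interior equalities are impossible because ``no vertex is universal'' --- the cleaner statement of that step (and effectively what the paper does via Lemma~\ref{lem: EG equality}) is that an interior term $k''$ would produce a composition cut that the clique $B_j$ straddles, forcing a decomposition of the indecomposable component $G_j$; since this is precisely the indecomposability contradiction you point to, the plan is sound.
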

\begin{proof}
(a): We prove first that a vertex $v_s$ belongs to $\bigcup_{i\geq 0} B_i$ if and only if $s \leq t$, where $t$ is the last term of $EG(d)$. Indeed, applying Lemma~\ref{lem: EG equality} with $Q=\bigcup_{i\geq 0}^\ell B_i$, we see that $d$ satisfies the $|Q|$th EG-inequality, so $s \leq |Q| \leq t$. Conversely, if $s \leq t$, then $v_s$ belongs to a set $Q'$ of $t$ vertices with the highest degrees in $G$. Lemma~\ref{lem: EG equality} implies the existence of subsets $P'$ and $T'$ of $V(G)$ such that $G=(G[P'\cup Q'], P', Q') \circ G[T']$, where $G[T']$ is either empty or canonically indecomposable. By Theorem~\ref{thm: deg seqs for split, pseudo}(b) and the uniqueness of the canonical decomposition, $Q'=\bigcup_{i \geq 0}^\ell B_i$.

Now let $W$ be a nonempty subset of $V(G)$, and suppose $v_s \in W$. If $s > t$, then by the last paragraph $v_s$ belongs to no clique $B_i$ and clearly belongs to no set $\{v_i : k<i \leq k'\}$ for $k,k' \in EG(d)$, so $W$ can equal neither of these. Assume that $s \leq t$, and let $B_j$ be the set in $\bigcup_{i\geq 0} B_i$ containing $v_s$. By Observation~\ref{obs: degree order}, we may assume that $B_j = \{v_a,v_{a+1},\dots,v_z\}$ for some $a,z \leq t$. Let $k,k'$ be the consecutive terms in $EG(d)$ such that $k<s \leq k'$. We show now that $a=k+1$ and $z=k'$.

By Lemma~\ref{lem: EG equality}, we may write $G=(G[P\cup Q], P, Q) \circ G[T]$, where $Q=\{v_1,\dots,v_{k}\}$, $P$ consists of all vertices of $G$ with degree less than $k$, and $T=V(G)-P-Q$, so $k+1 \leq a$. On the other hand, since we may write $G=(G[A \cup B], A, B)\circ G[C]$ for subsets $A$, $B$, and $C$ of $V(G)$ such that $B=\{v_1,\dots,v_{a-1}\}$, Lemma~\ref{lem: EG equality} implies that $a-1 \leq k$. Hence $a=k+1$, and similar applications of Lemma~\ref{lem: EG equality} show that $z=k'$. We conclude that $W=B_j$ if and only if $W=\{v \in V(G): k<d_G(v)<k'\}$.

Now let $A' = \{v \in V(G): k<d_G(v)<k'\}$. By Observation~\ref{obs: nbr and non} and the requirements of the canonical decomposition, vertices in $A_j$ have degree strictly between $k$ and $k'$, so $A_j \subseteq A'$. These same results also imply that vertices in $\bigcup_{i>j} B_i$, $B_j$, and $\bigcup_{i <j} V(G_i)$ all have degree at least $k'$, while vertices in $\bigcup_{i>j} A_i$ have degree at most $k$. Thus $A_j = A'$.

(b): Let $k$ be a term of $EG(d)$. Note that each isolated vertex of $G$ is a trivial component in the canonical decomposition (if $G$ has $t$ isolated vertices, then the canonical components $G_i$ with $\ell-t+1\leq i \leq \ell$ are each splitted with their respective vertex sets satisfying $B_i =\emptyset$ and $|A_i|=1$). We may therefore assume that $k>0$. Let $(G_j,A_j,B_j)$ be the splitted canonical component containing $v_k$. Further let $W=\{v_i \in V(G) : i>k\text{ and }d_i=k\}$, and suppose that $G_s$ is a canonical component of $G$ containing a vertex $v$ of $W$. Observation~\ref{obs: nbr and non} and the requirements of the canonical decomposition imply that vertices in $\bigcup_{i\geq j} A_i$ have degree less than $k$. By the arguments of part (a) above, if some nontrivial canonical component $G_{j'}$ of $G$ satisfies $j'<j$, then vertices in $V(G_{j'})$ have degree greater than $k$. Thus $G_s$ is a trivial component.

(c): By definition, $d^*_k$ is the number of terms of $d$ having value at least $k$. By (a) and Observation~\ref{obs: degree order}, since $G_j$ is the canonical component containing $v_k$, the set of vertices having degree at least $k$ is $V(G_0) \cup \bigcup_{i=1}^\ell B_i \cup \bigcup_{i=1}^{j-1} A_i$.

(d): If $v \in B_j$, the $v$'s neighborhood in $G$ consists of its neighbors in $A_j$ and all vertices of $V(G_0) \cup \bigcup_{i=1}^\ell B_i \cup \bigcup_{i=1}^{j-1} A_i$ other than itself. Since  $k'-k-1$ of these latter vertices are vertices in $B_j$ to which $v$ is adjacent, (c) yields the expression for $d_G(v)$ immediately. If $v \in A_j$, then the neighborhood of $v$ in $B$ consists of the neighbors of $v$ in $G_j$ and the vertices $\{v_1,\dots,v_k\}$, so $d_G(v)=d_{G_j}(v)+k$.
\end{proof}

\section{Degree sequences of hereditary unigraphs} \label{sec: six}

In this section we use Theorem~\ref{thm: deg seqs and canon comps} and the structural results in Section~\ref{sec: four} to characterize the degree sequences of hereditary unigraphs.

\begin{lem}\label{lem: unique split ptn}
If $G$ is split with degree sequence $d=(d_1,\dots,d_n)$ and $m=m(d)$, then the vertices with degree at least $d_m$ form the clique of a partition of $V(G)$ into a clique and an independent set. If $G$ is indecomposable and has more than one vertex, then this is the unique such partition.
\end{lem}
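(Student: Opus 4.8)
The plan is to treat the two assertions separately. For existence I will use the degree‑sequence characterization of split graphs together with Lemma~\ref{lem: EG equality}, and for uniqueness I will exploit the adjacency structure forced by indecomposability through Observation~\ref{obs: nbr and non}.

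First I would handle the partition itself. Since $G$ is split, Theorem~\ref{thm: deg seqs for split, pseudo}(b) tells us that the $m$th EG‑inequality holds with equality. Taking $Q$ to be the $m$ vertices of largest degree (so that $\sum_{q\in Q}d_G(q)=\sum_{i=1}^m d_i$), Lemma~\ref{lem: EG equality} gives $G=(G[P\cup Q],P,Q)\circ G[T]$, where $P$ is the set of vertices of degree less than $m$ and $T=V(G)-P-Q$. The key numerical point is that $m=m(d)$ forces $d_{m+1}\le m-1<m$, so no vertex outside $Q$ has degree at least $m$; hence $T=\emptyset$ and $P=V(G)\setminus Q$. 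Reading off the composition, $Q$ is a clique and $V(G)\setminus Q$ is independent, so the $m$ highest‑degree vertices (each of degree at least $d_m$) form the clique of a split partition, as claimed.

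For uniqueness, suppose $G$ is indecomposable with more than one vertex and let $(K,I)$ be any split partition, with $K$ the clique and $I$ the independent set. Since complete and edgeless graphs are decomposable (they contain no $2K_2$, $C_4$, or $P_4$, so Theorem~\ref{thm: A4 indecomp} makes them decomposable), both $K$ and $I$ are nonempty. The crucial step is to promote the graph‑indecomposability of $G$ to indecomposability of the splitted graph $(G,I,K)$: a splitted decomposition $(G,I,K)=(G_1,A_1,B_1)\circ(G_0,A_0,B_0)$ would, upon forgetting the split structure of the right factor, exhibit $G=(G_1,A_1,B_1)\circ G_0$ as a decomposable graph, a contradiction. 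Thus Observation~\ref{obs: nbr and non} applies to $(G,I,K)$: every $v\in K$ has a neighbor in $I$ and every $u\in I$ has a non‑neighbor in $K$. Consequently each clique vertex has degree at least $|K|$ while each independent vertex has degree at most $|K|-1$, a strict degree gap.

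Finally I would use this gap to conclude. It shows that $K$ is exactly the set of vertices of degree at least $|K|$, and that the number of vertices of degree at least $|K|$ equals $|K|$; comparing two split partitions of different clique sizes then yields a contradiction, since a boundary vertex would be simultaneously required to have degree at least the larger threshold and at most the smaller one minus one. Hence every split partition has the same clique $\{v_1,\dots,v_m\}$, and because $d_m\ge|K|=m>m-1\ge d_{m+1}$ there is no tie at the threshold, so this clique coincides with the set of vertices of degree at least $d_m$ produced in the first part. I expect the main obstacle to be exactly the transfer between graph‑indecomposability and splitted‑indecomposability for an \emph{arbitrary} split partition rather than the canonical one; this is what makes Observation~\ref{obs: nbr and non} applicable and thereby delivers the degree gap on which uniqueness rests.
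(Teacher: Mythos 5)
Your proposal is correct, but it follows a genuinely different (and considerably longer) route than the paper's two-sentence proof. For the first statement the paper simply cites Hammer and Simeone~\cite{HammerSimeone81}; you instead re-derive it inside the paper's own machinery, combining Theorem~\ref{thm: deg seqs for split, pseudo}(b) with Lemma~\ref{lem: EG equality} and the observation that $d_{m+1}\leq m-1$ forces $T=\emptyset$ --- a nice, self-contained alternative. For uniqueness the paper argues directly: by Theorems~\ref{thm: split forb sub} and~\ref{thm: A4 indecomp}, every vertex of a nontrivial indecomposable split graph lies in an induced $P_4$, and since $V(P_4)$ has a unique partition into a clique and an independent set, the side of each vertex of $G$ is forced in any split partition. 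You instead promote indecomposability of $G$ to indecomposability of the splitted graph $(G,I,K)$ for an arbitrary split partition, invoke Observation~\ref{obs: nbr and non} to obtain the degree gap (each vertex of $K$ has degree at least $|K|$, each vertex of $I$ at most $|K|-1$), and finish with a cardinality/threshold comparison. This is valid, and it has the side benefit of establishing $d_m\geq m>m-1\geq d_{m+1}$, so there is no tie at the threshold and the clique really is the set of vertices of degree at least $d_m$ --- a reconciliation of the two parts of the statement that the paper leaves implicit (indeed, for \emph{decomposable} split graphs such as $P_3$ the phrase ``vertices of degree at least $d_m$'' can be ambiguous, and your argument makes clear why no such ambiguity arises in the indecomposable case). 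Two remarks: Observation~\ref{obs: nbr and non} is itself justified by exactly the $P_4$-covering fact the paper uses, so the two uniqueness arguments share the same ultimate source; and since that justification already works for an arbitrary split partition of a canonically indecomposable graph, the ``promotion'' step you flag as the main obstacle, while correct, could have been bypassed by applying the observation directly.
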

\begin{proof}
The first statement is proved by Hammer and Simeone in~\cite{HammerSimeone81}. If $G$ is indecomposable and has more than one vertex, then by Theorems~\ref{thm: split forb sub} and~\ref{thm: A4 indecomp} each vertex of $G$ belongs to an induced $P_4$. Since the vertex set of $P_4$ has a unique partition into an independent set and a clique, the same property follows for the vertex set of $G$.
\end{proof}

\begin{thm}\label{thm: deg seq for her unis, part I}
A graph $G$ is a hereditary unigraph if and only if for each canonical component $G'$ of $G$, the degree sequence $d'=(d'_1,\dots,d'_p)$ of $G'$ satisfies the following:
\begin{enumerate}
\item[\textup{(i)}] if $G'$ is split then $p=1$ or $d'_1=\dots=d'_{m'} \in \{m', p-2\}$ or $d'_{m'+1}=\dots=d'_p \in \{1, m'-1\}$, where $m' = m(d')$;
\item[\textup{(ii)}] if $G'$ is not split then $d'$ is one of the sequences \[(2^5), \quad (1^{2r}), \quad (r,1^{2s+r}), \quad ((2r-2)^{2r}), \quad ((2s+r-1)^{2s+r},2s)\] for some $r \geq 2$ and $s \geq 1$.
\end{enumerate}
\end{thm}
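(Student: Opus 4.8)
The plan is to reduce everything to a statement about individual canonical components and then translate the known structural classification of hereditary-unigraph components into degree-sequence conditions. By Lemma~\ref{lem: her uni iff components are}, $G$ is a hereditary unigraph if and only if each of its canonical components $G'$ is one, so it suffices to show that an indecomposable graph $G'$ is a hereditary unigraph exactly when $d'$ satisfies (i) (in the split case) or (ii) (in the non-split case). Since each component is indecomposable, I would feed it into the two classification results already proved: Corollary~\ref{cor: indecomp non-split her uni} for non-split components, and Corollary~\ref{cor: indecomp split her uni} (the equivalence of being a hereditary unigraph, being $\{R,\overline{R},S,\overline{S}\}$-free, and being $K_1$ or a top- or bottom-expanded spider) for split ones.

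The non-split case is a direct computation. The admissible graphs from Corollary~\ref{cor: indecomp non-split her uni}---namely $C_5$, $rK_2$, $K_{1,r}+sK_2$, and the complements of the latter two (with $\overline{C_5}=C_5$ contributing nothing new)---have degree sequences $(2^5)$, $(1^{2r})$, $(r,1^{2s+r})$, $((2r-2)^{2r})$, and $((2s+r-1)^{2s+r},2s)$, which is precisely list (ii). For the converse, each of these graphs is a unigraph by Theorem~\ref{thm: Tysh uni}, so a non-split $G'$ realizing one of the listed sequences is forced to be isomorphic to the corresponding graph and hence is a hereditary unigraph.

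For the forward direction of the split case I would compute the degree sequence of each expanded spider. Substituting complete graphs of sizes $c_1,\dots,c_t$ (with $\sum c_i=C$ and $t\ge 2$) for the body of a headless spider makes the whole body a single clique of size $C$, so every body vertex gets the same degree; tracking whether the underlying spider satisfies condition iii(a) or iii(b) of Definition~\ref{defn: prime spider} shows that in the type-(a) case all $C$ body vertices have degree $C$ (giving $m'=C$ and $d'_1=\dots=d'_{m'}=m'$), while in the type-(b) case they all have degree $C+t-2=p-2$ (giving $d'_1=\dots=d'_{m'}=p-2$). The feet then occupy the remaining positions with degrees $c_i\le C-1$, so $m'=C$ in both cases. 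Bottom-expanded spiders, being complements of top-expanded ones, satisfy the third alternative of (i) with value $m'-1$ or $1$, and $K_1$ gives $p=1$. Thus every hereditary-unigraph split component satisfies (i).

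The converse for split components---recovering the graph from the degree-sequence condition---is the step I expect to be the main obstacle. Here I would first use that an indecomposable split $G'$ has $m'=m(d')$ as a term of $EG(d')$ (Theorem~\ref{thm: deg seqs for split, pseudo}(b)), so that by Lemma~\ref{lem: EG equality} together with indecomposability its clique $B$ is exactly the top $m'$ vertices and its independent set $A$ the remaining $p-m'$ (this is the unique split partition of Lemma~\ref{lem: unique split ptn}). In the alternative $d'_1=\dots=d'_{m'}=m'$, each clique vertex has degree $m'$ and hence exactly one neighbor in $A$; since every vertex of $A$ has a neighbor in $B$ by Observation~\ref{obs: nbr and non}, the sets $\{N(a)\cap B:a\in A\}$ partition $B$ into nonempty cliques, exhibiting $G'$ as a top-expanded spider of type (a) (with $|A|\ge 2$ forced, else $G'$ is a complete graph and decomposable). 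The alternative $d'_1=\dots=d'_{m'}=p-2$ is handled identically via \emph{non}-neighborhoods, giving a type-(b) top-expanded spider. Finally I would check that condition (i) is closed under complementation, the two ``top'' alternatives (values $m'$ and $p-2$) corresponding under $d'\mapsto\overline{d'}$ to the two ``bottom'' alternatives (values $m'-1$ and $1$); the remaining two cases then follow from the first two by passing to $\overline{G'}$, which is a top-expanded spider and so makes $G'$ a bottom-expanded spider. The delicate points are confirming that the clique is precisely the top $m'$ vertices and that the degree-counting forces the foot--body adjacencies to partition the clique; once these are secured, the spider structure falls out.
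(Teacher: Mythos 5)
Your proposal is correct and follows essentially the same route as the paper: reduce to canonical components via the structural characterization, use the unique split partition (Lemma~\ref{lem: unique split ptn}) to place the clique on the top $m'$ degrees, count degrees to force exactly one neighbor (or nonneighbor) of each clique vertex in the independent set, and partition the clique by these (non)neighborhoods, invoking Observation~\ref{obs: nbr and non} for nonemptiness, with the non-split case handled by direct computation of the degree sequences in Corollary~\ref{cor: indecomp non-split her uni}. Your explicit treatment of the bottom-expanded case by complementation and of the non-split converse via unigraphicity are points the paper compresses into ``similar arguments,'' but they are the same argument in substance.
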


\begin{proof}
Theorem~\ref{thm: main structural} characterizes hereditary unigraphs in terms of their canonical components. Let $G$ be an arbitrary graph, and let $G'$ be a canonical component of $G$ with degree sequence $d'$. It suffices to show that $G'$ has one of the forms listed in Theorem~\ref{thm: main structural} if and only if its degree sequence meets the conditions (i) and (ii) here.

Suppose first that $G'$ is split. We note that $G'$ has one vertex if and only if $p=1$ and assume that $G'$ has more than one vertex. Let $\{v'_1,\dots,v'_p\}$ be the vertex set of $G'$, indexed so that $v'_i$ has degree $d'_i$ for each $i$. By Lemma~\ref{lem: unique split ptn}, $B'= \{v'_1,\dots,v'_{m'}\}$ is the set of clique vertices in the unique partition of $V(G')$ into a clique and independent set, and $A'=\{v_{m'+1},\dots,v'_p\}$ is the independent set.

If $G'$ is a top-expanded spider, then every two vertices of $B'$ have the same degree. Depending on which case holds in Definition~\ref{defn: prime spider}(iii), each vertex in $B'$ is adjacent to either $1$ or $|A'|-1$ vertices of $A'$, along with all other vertices of $B'$. Hence $d'_1=\dots=d'_{m'} \in \{m', p-2\}$. Conversely, suppose $d'_1=\dots=d'_{m'} \in \{m', p-2\}$. Every vertex $v'_j$ of $B'$ is adjacent to the $m'-1$ other vertices in $B'$. Thus if $d'_j=m'$, then $v'_j$ has exactly one neighbor in $A'$. In this case, we may partition the vertices of $B'$ into classes $C_1,\dots,C_t$ according to their neighbors in $A'$. Since $G'$ is indecomposable, every vertex in $A'$ has at least one neighbor in $B'$ by Observation~\ref{obs: nbr and non}. It follows that $G'$ is a top-expanded spider, where $C_1,\dots,C_t$ are the cliques substituted in forming $G'$ from a prime spider. Similarly, if $d'_j = p-2$, then $v'_j$ has exactly one nonneighbor in $A'$, and we may partition $B'$ into $C_1,\dots,C_t$ according to their nonneighbors in $A'$. Since $G$ is indecomposable, each vertex in $A'$ has at least one nonneighbor in $B'$, so once again $G'$ is a top-expanded spider obtained by substituting $C_1,\dots,C_t$ for body vertices in a prime spider.

Hence $G'$ is a top-expanded spider if and only if $d'_1=\dots=d'_{m'} \in \{m', p-2\}$. Similarly arguments show that $G'$ is a bottom-expanded spider if and only if $d'_{m'}=\dots=d'_p \in \{1,m'-1\}$.

Now suppose that $G'$ is not split. For $r \geq 2$ and $s \geq 1$, each of $C_5$, $rK_2$, and $K_{1,r}+sK_2$ is clearly a unigraph with degree sequence $(2^5)$, $(1^{2r})$, $(r,1^{2s+r})$, respectively. The complements $\overline{rK_2}$ and $\overline{K_{1,r}+sK_2}$ are also unigraphs, having degree sequences $((2r-2)^{2r})$ and $((2s+r-1)^{2s+r},2s)$, respectively.
\end{proof}

\begin{thm}\label{thm: deg seq for her unis, part II}
If $G$ is a graph with degree sequence $d=(d_1,\dots,d_n)$, then $G$ is a hereditary unigraph if and only if the following conditions hold:
\begin{enumerate}
\item[\textup{(a)}] For every pair $k,k'$ of consecutive terms in $EG(d)$ such that $k' \geq k+2$, either
\begin{enumerate}
\item[\textup{(i)}] the terms $d_i$ with $k <i \leq k'$ are all equal and belong to $\{d^*_{k'}, d^*_{k}-\delta_{k}-2\}$, or
\item[\textup{(ii)}] the terms $d_i$ such that $k<d_i<k'$ are all equal and belong to $\{k+1,k'-1\}$.
\end{enumerate}
\item[\textup{(b)}] If $t$ denotes the last term of $EG(d)$, then if any terms $d_i$ satisfy $i>t$ and $d_i>t$, they collectively form one of the sequences
\begin{multline}\nonumber ((t+2)^5), \quad ((t+1)^{2r}), \quad (t+r, (t+1)^{2s+r}),\\ ((t+2r-2)^{2r}),\quad ((t+2s+r-1)^{2s+r}, t+2s)\end{multline} for some $r \geq 2$ and $s \geq 1$.
\end{enumerate}
\end{thm}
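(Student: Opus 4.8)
The plan is to treat Theorem~\ref{thm: deg seq for her unis, part II} as a reformulation of Theorem~\ref{thm: deg seq for her unis, part I}: the latter characterizes hereditary unigraphs through the degree sequences $d'$ of their \emph{canonical components}, and the task here is simply to re-express those per-component conditions in terms of the global degree sequence $d$ together with the derived data $EG(d)$, $d^*$, and $\delta$. The bridge is Theorem~\ref{thm: deg seqs and canon comps}, which records exactly how the canonical components sit inside $d$. First I would use part (a) of that theorem to set up the correspondence: each consecutive pair $k<k'$ of terms of $EG(d)$ marks off the clique $B_j=\{v_i:k<i\le k'\}$ of a splitted canonical component $(G_j,A_j,B_j)$, with $A_j=\{v:k<d_G(v)<k'\}$; moreover $|B_j|=k'-k=m(d')$ by Lemma~\ref{lem: unique split ptn}. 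A block of size $1$ (that is, $k'=k+1$) forces the component to be $K_1$ by Observation~\ref{obs: nbr and non}, which is always a hereditary unigraph and carries no constraint; this is why condition (a) only addresses pairs with $k'\ge k+2$, i.e.\ split components on more than one vertex.

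For such a component I would translate condition (i) of Theorem~\ref{thm: deg seq for her unis, part I} using Theorem~\ref{thm: deg seqs and canon comps}(d). Writing $m'=k'-k$ and $p=|V(G_j)|=m'+|A_j|$, part (d) gives $d_G(v)=d_{G_j}(v)-m'+d^*_{k'}$ for $v\in B_j$ and $d_G(v)=d_{G_j}(v)+k$ for $v\in A_j$. The ``top-expanded'' alternative $d'_1=\dots=d'_{m'}\in\{m',p-2\}$ therefore says precisely that the terms $d_i$ with $k<i\le k'$ are all equal to $d^*_{k'}$ (when the common $G_j$-degree is $m'$) or all equal to $(p-2)-m'+d^*_{k'}$ (when it is $p-2$), while the ``bottom-expanded'' alternative $d'_{m'+1}=\dots=d'_p\in\{1,m'-1\}$ says that the terms with $k<d_i<k'$ are all equal to $k+1$ or all equal to $k'-1$. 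The second of these is already condition (a)(ii), and to match the first with (a)(i) I must identify $(p-2)-m'+d^*_{k'}$ with $d^*_k-\delta_k-2$.

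The one genuinely non-routine point, and the step I expect to be the main obstacle, is this identification, which amounts to the identity $d^*_k-d^*_{k'}-|A_j|=\delta_k$, equivalently $\delta_k=|\{v:d_G(v)=k\}|$. Since $\{v:k\le d_G(v)<k'\}=A_j\cup\{v:d_G(v)=k\}$, this reduces to showing that \emph{no} vertex of degree exactly $k$ has index at most $k$, i.e.\ $d_k>k$. For $k=0$ this is vacuous; for $k\ge1$ I would argue structurally: $k$ is an $EG$-term with $k\le t$, so $v_k$ lies in a clique block strictly above $B_j$ and hence, being higher in the composition, dominates all of $B_j$, while also being adjacent to $v_1,\dots,v_{k-1}$ (the top $k$ vertices form a clique by Corollary~\ref{cor: k leq m}). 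Thus $d_G(v_k)\ge(k-1)+(k'-k)=k'-1\ge k+1$, giving $d_k>k$ as needed. With this identity in hand, condition (i) of part~I is exactly condition (a) here.

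Finally I would handle the non-split case. Only $G_0$ can be non-split, so condition (ii) of part~I constrains a single component. Using the degree-ordering of Observation~\ref{obs: degree order} together with part (d) of Theorem~\ref{thm: deg seqs and canon comps}, one checks that every clique vertex has index $\le t$ and every independent-set vertex $v$ satisfies $d_G(v)\le t$, while each trivial component with index beyond $t$ has degree $\le t$ by Theorem~\ref{thm: deg seqs and canon comps}(b); hence the vertices $v_i$ with $i>t$ and $d_i>t$ are exactly those of a non-split $G_0$, and each satisfies $d_G(v)=d_{G_0}(v)+t$. Shifting each of the five sequences in part~I(ii) up by $t$ produces precisely the five sequences listed in condition (b), and because these realizations are unigraphs the degree sequence determines $G_0$. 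Assembling the split and non-split translations, and noting that trivial components impose no condition, yields the stated equivalence.
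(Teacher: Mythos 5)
Your proposal is correct and follows essentially the same route as the paper: both reduce the statement to Theorem~\ref{thm: deg seq for her unis, part I} and translate its per-component conditions into conditions on $d$ via Theorem~\ref{thm: deg seqs and canon comps} (with Lemma~\ref{lem: unique split ptn} giving $m(d')=|B_j|=k'-k$, and size-one blocks/trivial components imposing no constraint). In fact you supply one detail the paper leaves implicit---the verification that $|A_j|+d^*_{k'}=d^*_k-\delta_k$, equivalently that $d_k>k$ for each positive term $k$ of $EG(d)$, which is exactly what justifies rewriting the expanded-clique degree $p-2-(k'-k)+d^*_{k'}$ as $d^*_k-\delta_k-2$.
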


\begin{proof}
We show that conditions (a) and (b) are equivalent to parts (i) and (ii), respectively, of Theorem~\ref{thm: deg seq for her unis, part I}. Let $G$ be an arbitrary graph, and denote the vertices of $G$ as $\{v_1,\dots,v_n\}$ so that $v_i$ has degree $d_i$ for each $i$.

Let $k,k'$ be a pair of consecutive terms in $EG(d)$ such that $k' \geq k+2$. By Theorem~\ref{thm: deg seqs and canon comps}(a), the triple $(G[T\cup U],T,U)$ is a canonical component of $G$, where $T=\{v \in V(G) : k<d_G(v)<k'\}$ and $U = \{v_i : k<i<k'\}$. Suppose that $G[T\cup U] = G_j$, so that $T=A_j$ and $U=B_j$.

Suppose (a) holds. In case (i) the vertices in $U$ all have the same degree in $G_j$, and this degree is $|B_j|$ or $|V(G_j)|-2$. Theorem~\ref{thm: deg seqs and canon comps} implies that the degree in $G$ of a vertex in $U$ is either $|B_j|-(k'-k)+d^*_{k'}=d^*_{k'}$ or $|A_j\cup B_j|-(k'-k)+d^*_{k'}-2 = d^*_{k}-\delta_{k}-2$. In case (ii) the vertices in $T$ all have the same degree, which is $1$ or $|U|-1$. Theorem~\ref{thm: deg seqs and canon comps} implies that the degree of a vertex in $T$ is either $k+1$ or $k+|U|-1 = k'-1$.

Conversely, let $d'=(d'_1,\dots,d'_p)$ be the degree sequence of $G_j$. Lemma~\ref{lem: unique split ptn} implies that $m(d')=|B_j|$. Suppose first that the vertices in $B_j$ all have the same degree in $G$, which is either $\{d^*_{k'}$ or $d^*_{k}-\delta_{k}-2\}$. By Theorem~\ref{thm: deg seqs and canon comps}, the degree of these vertices in $G_j$ is either $d^*_{k'}+k'-k-d^*_{k'} = m(d')$ or $d^*_{k}-\delta_{k}-d^*_{k'}+ k' -k-2 = |A_j\cup B_j|-2 = p-2$. If instead the vertices of $A_j$ all have the same degree in $G$, which is either $k+1$ or $k'-1$, then by Theorem~\ref{thm: deg seqs and canon comps}, the degree of these vertices in $G_j$ is either $1$ or $k'-1-k = m(d')-1$. Hence (a) is satisfied.

Now let $d'$ denote the list of terms $d_i$ of $d$ such that $i>t$ and $d_i>t$, where $t$ is the last term of $EG(d)$. It follows from Theorem~\ref{thm: deg seqs and canon comps} and the adjacency requirements of the canonical decomposition that the vertices of $G$ corresponding to the terms in $d'$ comprise $V(G_0)$, and $G_0$ is not split. Each vertex in $V(G_0)$ has a degree in $G$ that is clearly $t$ units larger than the its degree in $G_0$, so $d'$ has one of the forms listed above in (b) if and only if the degree sequence of $G_0$ is one listed in (ii) in Theorem~\ref{thm: deg seq for her unis, part I}.
\end{proof}

Let $G$ be a graph with $n$ vertices. As stated in~\cite{Tyshkevich00}, the degree sequences of the canonical components of $G$ may be computed from the degree sequence of $G$ in linear time. (Indeed, one may use Theorem~\ref{thm: deg seqs and canon comps} to modify a linear-time algorithm for verifying the Erd\"{o}s--Gallai inequalities, such as the one in~\cite{IvanyiEtAl11}, to derive the components' degree sequences.) In the process of decomposition one may keep track of which components are split. Checking then that the degree of each vertex in its respective canonical component satisfies the conditions in Theorem~\ref{thm: deg seq for her unis, part I} can be accomplished in $O(n)$ steps. We thus have the following.

\begin{thm}
Hereditary unigraphs may be recognized from their degree sequences in linear time.
\end{thm}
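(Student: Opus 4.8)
The plan is to assemble the final linear-time recognition result from the machinery developed in Sections~\ref{sec: five} and~\ref{sec: six}, treating it essentially as a corollary of Theorem~\ref{thm: deg seq for her unis, part II} together with the algorithmic remarks that precede the statement. The input is a degree sequence $d=(d_1,\dots,d_n)$ in descending order, and the goal is to decide in $O(n)$ time whether every graph (equivalently, any graph, since the conditions are degree-sequence conditions) with this degree sequence is a hereditary unigraph. First I would observe that by Theorem~\ref{thm: deg seqs and canon comps} the entire canonical decomposition of $G$ is encoded in $EG(d)$: the consecutive terms $k,k'$ of $EG(d)$ delimit the cliques $B_j$ and independent sets $A_j$ of the splitted canonical components, and the terms $d_i$ with $i>t$, $d_i>t$ (where $t$ is the last term of $EG(d)$) form the degree sequence of the core component $G_0$. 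Thus the structural decomposition need never be materialized as a graph; it is entirely recoverable from the single list $EG(d)$ and the conjugate values $d^*_k$.

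The key computational steps are as follows. First I would compute $EG(d)$, the list of indices $k$ at which the $k$th EG-inequality holds with equality. As noted in the excerpt, verifying all EG-inequalities can be done in linear time via a running-prefix-sum argument (for instance by adapting the algorithm of~\cite{IvanyiEtAl11}), and recording at which indices equality is attained costs no extra asymptotic time; the required quantities $\sum_{i=1}^k d_i$, the conjugate values $d^*_k$, and the counts $\delta_k$ are all computable with $O(n)$ cumulative bookkeeping as $k$ runs from $1$ to $n$. Second, using the consecutive pairs in $EG(d)$, I would identify each splitted canonical component's clique and independent set and, for each such component, check condition~(a) of Theorem~\ref{thm: deg seq for her unis, part II}: namely that the clique degrees are all equal and lie in $\{d^*_{k'},\,d^*_k-\delta_k-2\}$, or that the strictly-interior degrees are all equal and lie in $\{k+1,\,k'-1\}$. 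Since the components partition the index range, these checks across all components together inspect each term $d_i$ a constant number of times, for a total of $O(n)$. Third, I would isolate the block of terms with $i>t$ and $d_i>t$ and test whether it matches one of the five admissible non-split patterns of condition~(b); each pattern is recognizable by a constant-depth scan of that block.

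Finally I would invoke Theorem~\ref{thm: deg seq for her unis, part II} to conclude that $d$ is the degree sequence of a hereditary unigraph if and only if all of these checks succeed, and observe that the total work is a constant number of linear passes, hence $O(n)$. I expect the only genuine subtlety to be the first step: one must confirm that detecting the equalities in $EG(d)$, and simultaneously maintaining the auxiliary values $d^*_k$ and $\delta_k$ needed to evaluate condition~(a), can all be folded into a single linear-time sweep without a hidden logarithmic or quadratic factor. This is precisely the point the excerpt addresses by appealing to~\cite{Tyshkevich00} (where the canonical components' degree sequences are computed in linear time) and to the linear-time EG-verification of~\cite{IvanyiEtAl11}; so the main obstacle is really one of careful bookkeeping rather than a new idea. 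Once the equivalence of Theorem~\ref{thm: deg seq for her unis, part II} is taken as given, the proof reduces to the remark that each of its conditions is a per-component, constant-time-per-vertex test, and the components are read off directly from a linear-time computation of $EG(d)$.
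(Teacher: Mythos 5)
Your proposal is correct and follows essentially the same route as the paper: compute the canonical-decomposition data from the degree sequence via a linear-time Erd\H{o}s--Gallai verification (the paper likewise cites~\cite{IvanyiEtAl11} and Theorem~\ref{thm: deg seqs and canon comps}), then check the per-component degree conditions in $O(n)$ total time. The only cosmetic difference is that you test the conditions of Theorem~\ref{thm: deg seq for her unis, part II} directly on $EG(d)$, $d^*$, and $\delta$, whereas the paper extracts the components' degree sequences and checks Theorem~\ref{thm: deg seq for her unis, part I}; since those two theorems are proved equivalent, the algorithms coincide in substance.
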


Degree sequences of general unigraphs were studied by several authors (see~\cite{KleitmanLi75}--\cite{Koren76b}, \cite{Li75}, \cite{Tyshkevich00}, \cite{TyshkevichChernyak78}--\cite{TyshkevichChernyak79}). Tyshkevich~\cite{Tyshkevich00} gave a complete characterization of unigraphic degree sequences using the canonical decomposition. Theorems~\ref{thm: deg seq for her unis, part I} and \ref{thm: deg seq for her unis, part II} on hereditary unigraphs may be seen to be a special case of these previous results.

Theorems~\ref{thm: deg seq for her unis, part I} and \ref{thm: deg seq for her unis, part II} in turn generalize the degree sequence characterizations for our example families of hereditary unigraphs. For example, the reliance of the characterization in Theorem~\ref{thm: deg seq for her unis, part II} on the list $EG(d)$ resembles that of Theorem~\ref{thm: deg seqs for split, pseudo} for the threshold and split graphs. Indeed, as mentioned in Section~\ref{sec: five}, the threshold graphs are precisely those graphs for which the terms of $EG(d)$ are consecutive integers and, if $t$ is the last term of $EG(d)$, every degree sequence term appears within the first $t$ terms or has value less than $t$.

Moreover, the degree sequence characterization for matroidal and matrogenic graphs presented in~\cite{Tyshkevich84} relies on the decomposition of the degree sequence of a graph into the degree sequences of its canonical components. These component degree sequences are then checked to see if they are each the degree sequence of a single vertex, a net or net-complement, $rK_2$ for some $r\geq 2$, or $C_5$ (this last not being allowed for matroidal graphs). It is easy to see that these characterizations are special cases of Theorem~\ref{thm: deg seq for her unis, part I}. In fact, to the characterization in~\cite{Tyshkevich84} and to another characterization in~\cite{MarchioroEtAl84} of degree sequences of matrogenic graphs we may add the following: A graph $G$ is matrogenic (or matroidal) if and only if its degree sequence satisfies the conditions of Theorem~\ref{thm: deg seq for her unis, part II} with both (i) and (ii) holding in (a), and with the lists $(t+r, (t+1)^{2s+r})$ and $((t+2s+r-1)^{2s+r}, t+2s)$ (and $((t+2)^5)$, for matroidal graphs) omitted in (b).

\end{document}